\documentclass[12pt]{scrartcl}

\usepackage[]{amsmath, amssymb,amsfonts,amsthm,mathtools,braket,color,enumerate,stmaryrd}

\usepackage{hhline}
\usepackage{url}
\usepackage{here}
\usepackage{tikz}
\usepackage{tikz-cd}
\usepackage{hyperref}
\hypersetup{
  colorlinks   = true, %Colours links instead of ugly boxes
  urlcolor     = blue, %Colour for external hyperlinks
  linkcolor    = blue, %Colour of internal links
  citecolor   = red %Colour of citations
}
\usetikzlibrary{decorations}
\usetikzlibrary{arrows}
\tikzstyle{v} = [circle, draw, inner sep=2pt, minimum size=3pt, fill=black]
\tikzstyle{l} = [rectangle, draw, rounded corners]

% % % % % % % % % % % % % % % % % % % % % % % % 
\theoremstyle{plain}% default
\newtheorem{theorem}{Theorem}[section]
\newtheorem{lemma}[theorem]{Lemma}
\newtheorem{proposition}[theorem]{Proposition}
\newtheorem{corollary}[theorem]{Corollary}

\theoremstyle{definition}
\newtheorem{definition}[theorem]{Definition}

\newtheorem{example}[theorem]{Example}

\newtheorem{question}[theorem]{Question}
\newtheorem{remark}[theorem]{Remark}

\DeclareMathOperator{\Ima}{Im}
\DeclareMathOperator{\coker}{coker}
\DeclareMathOperator{\rank}{rank}
\DeclareMathOperator{\lcm}{lcm}
\DeclareMathOperator{\id}{id}
\DeclareMathOperator{\Spec}{Spec}
\DeclareMathOperator{\Frac}{Frac}
\DeclareMathOperator{\Ann}{Ann}
\DeclareMathOperator{\ord}{ord}

\DeclareMathOperator{\codim}{codim}
\newcommand{\A}{\mathcal{A}}

\setcounter{MaxMatrixCols}{20}

\begin{document}

\title{The Characteristic Quasi-Polynomials of Hyperplane Arrangements over Residually Finite Dedekind Domains}
\author{
Masamichi Kuroda
\thanks{
Faculty of Engineering, Nippon Bunri University, Oita 870-0316, Japan. 
E-mail:kurodamm@nbu.ac.jp
} \and
Shuhei Tsujie
\thanks{Department of Mathematics, Hokkaido University of Education, Asahikawa, Hokkaido 070-8621, Japan. 
Email:tsujie.shuhei@a.hokkyodai.ac.jp}
}

\date{}

\maketitle

\begin{abstract}
Kamiya, Takemura, and Terao initiated the theory of the characteristic quasi-polynomial of an integral arrangement, which is a function counting the elements in the complement of the arrangement modulo positive integers. 

They gave a period of the characteristic quasi-polynomial, called the LCM-period, and  showed that the first constituent of the characteristic quasi-polynomial coincides with the characteristic polynomial of the corresponding hyperplane arrangement. 

Recently, Liu, Tran, and Yoshinaga showed that the last constituent of the characteristic quasi-polynomial coincides with the characteristic polynomial of the corresponding toric arrangement. 

In addition, by using the theory of toric arrangements, Higashitani, Tran, and Yoshinaga proved that the LCM-period is the minimum period of the characteristic quasi-polynomial. 

In this paper, we study an arrangements over a Dedekind domain such that every residue ring with a nonzero ideal is finite and give algebraic generalizations of the above results.
\end{abstract}

{\footnotesize \textit{Keywords}: 
hyperplane arrangement, 
root system, 
characteristic quasi-polynomial, 
Dedekind domain 
}

{\footnotesize \textit{2020 MSC}: 
52C35, %Arrangements of points, flats, hyperplanes [52Cxx Discrete geometry]
05E40 %Combinatorial aspects of commutative algebra
%05C15, %Coloring of graphs and hypergraphs [05Cxx Graph theory]
%05C22, %Signed and weighted graphs [05Cxx Graph theory]
%20F55,  %Reflection and Weyl groups [20Fxx Special aspects of infinite or finite groups]
%13N15 %Derivations [13Nxx Differential algebra]
}

\tableofcontents

\section{Introduction}

%Any finite \textcolor{red}{list} $\A$ consisting of nonzero integer vectors in $\mathbb{Z}^{\ell}$ defines 
%a hyperplane arrangement $\A (\mathbb{R})$ in the vector space $\mathbb{R}^{\ell}$, 
%a toric arrangement $\A (\mathbb{S}^1)$ in the torus $(\mathbb{S}^1)^{\ell}$, and 
%a $q$-reduced arrangement $\A (\mathbb{Z} / q \mathbb{Z})$ in $(\mathbb{Z} / q \mathbb{Z})^{\ell}$ for any 
%positive integer $q$. 

For a positive integer $ \ell $, let $\A = \{ c_1, \dots , c_n \} \subseteq \mathbb{Z}^{\ell}$ be a finite subset consisting of nonzero integral column vectors. 
Roughly speaking, an \textbf{arrangement} means a finite collection of subspaces of codimension $ 1 $ of a space. 
We can define three kinds of arrangements from $ \mathcal{A} $ as follows: 

The first is the \textbf{hyperplane arrangement} $\A (\mathbb{R}) = \{ H_{1}, \dots, H_{n} \}$ in the vector space $\mathbb{R}^{\ell}$ consisting of hyperplanes 
\begin{align*}
H_j \coloneqq \Set{ \boldsymbol{x} = (x_1, \dots, x_\ell) \in \mathbb{R}^{\ell} | \boldsymbol{x} c_j = 0 } \quad (j \in [n] \coloneqq \{ 1, \dots, n \}). 
\end{align*} 

The second is the \textbf{$q$-reduced arrangement} $\A (\mathbb{Z} / q \mathbb{Z}) = \{ H_{1, q}, \dots, H_{n, q} \}$ in $(\mathbb{Z} / q \mathbb{Z})^{\ell}$ for any positive integer $q$, where 
\begin{align*}
H_{j, q} \coloneqq \Set{ [\boldsymbol{x}]_{q} \in (\mathbb{Z} / q \mathbb{Z})^{\ell} | \boldsymbol{x} c_j \equiv 0 \pmod{q}  } \quad
(j \in [n]) 
\end{align*} 
and $ [\boldsymbol{x}]_{q} $ denotes the equivalence class of $ \boldsymbol{x} $. 

The third is the \textbf{toric arrangement} $\A (\mathbb{C}^{\times}) = \{ T_1, \dots , T_n \}$ in the algebraic torus $(\mathbb{C}^{\times})^{\ell}$, 
where 
\begin{align*}
T_j \coloneqq \Set{ (t_1, \dots, t_n) \in (\mathbb{C}^{\times})^{\ell} | \prod_{i=1}^{\ell} t_i^{c_{ij}} = 1 } \quad (j \in [n])
\end{align*} 
and $ c_{ij} $ denotes the $ i $-th coefficient of the column vector $ c_{j} $.

Combinatorics of $ \mathcal{A}(\mathbb{R}) $ and $ \mathcal{A}(\mathbb{C}^{\times}) $ is described as the posets of the intersections defined as follows: 
\begin{align*}
L (\A (\mathbb{R})) &\coloneqq \Set{H_J | J \subseteq [n]},    \\
L (\A (\mathbb{C}^{\times})) &\coloneqq 
\Set{Z | J \subseteq [n], \ \mbox{$Z$ is a connected component of $T_J$} }, 
\end{align*}
where $ H_J \coloneqq \bigcap_{j \in J} H_j $ and $ T_J \coloneqq \bigcap_{j \in J} T_j $, and the partial order defined by reverse inclusion for each poset. 
Note that $ H_{\varnothing} = \mathbb{R}^{\ell} $ and $ T_{\varnothing} = (\mathbb{C}^{\times})^{\ell} $ are the minimal elements. 

Main concerns for study of arrangements are relations among combinatrics, algebra, and geometry of arrangements. 
The characteristic polynomials of the posets play important rolls, which are defined by using the M\"{o}bius functions on posets. 
Let $ P $ be a finite poset with unique minimal element $ \hat{0} $. 
The \textbf{M\"{o}bius function} $ \mu $ on $ P $ is defined recursively by 
\begin{align*}
\mu(\hat{0}) \coloneqq 1
\qquad \text{ and } \qquad 
\mu(Z) \coloneqq -\sum_{Y < Z} \mu(Y) \text{ for } Z \neq \hat{0}. 
\end{align*}

The \textbf{characteristic polynomials} $ \chi_{\A (\mathbb{R})} (t) $ and $ \chi_{\A (\mathbb{C}^{\times})} $ are defined by  
\begin{align*}
\chi_{\A (\mathbb{R})} (t) &\coloneqq \sum_{Z \in L (\A (\mathbb{R}))} \mu (Z) t^{\dim Z},  \\
\chi_{\A (\mathbb{C}^{\times})} (t) &\coloneqq \sum_{Z \in L (\A (\mathbb{C}^{\times}))} \mu (Z) t^{\dim Z}. 
\end{align*}

The \textbf{complement} of an arrangement is the complements of the union of the members of the arrangement in the ambient space. 
Each connected component of the complement of $ \mathcal{A}(\mathbb{R}) $ is called a chamber. 
Zaslavsky \cite{zaslavsky1975facing} proved that the numbers of chambers and bounded chambers coincide with $ |\chi_{\mathcal{A}(\mathbb{R})}(-1)| $ and $ |\chi_{\mathcal{A}(\mathbb{R})}(1)| $. 
Orlic and Solomon \cite{orlik1980combinatorics-im} proved that $ \chi_{\A (\mathbb{R})} (t) $ is equivalent to the Poincar\'e polynomial of the complement of the complexification of $ \mathcal{A}(\mathbb{R}) $. 
Moci \cite[Corollary 5.12]{moci2012tutte-totams} showed that $ \chi_{\A (\mathbb{C}^{\times})}(t) $ is equivalent to the Poincar\'e polynomial of the complement of $ \mathcal{A}(\mathbb{C}^{\times}) $. 
These facts present a strong association between combinatorics and geometry of arrangements. 

Athanasiadis \cite[Theorem 2.2]{athanasiadis1996characteristic-aim} provided a method to compute the characteristic polynomial of an integral arrangement by counting the points of the complement of $ \mathcal{A}(\mathbb{Z}/p\mathbb{Z}) $ for large enough prime numbers $ p $.  
Athanasiadis \cite[Theorem 2.1]{athanasiadis1999extended-joac} also proved that the characteristic polynomial can be computed by counting the points of the complement of $ \mathcal{A}(\mathbb{Z}/q\mathbb{Z}) $ for large enough integers $ q $ relatively prime a constant which depends only on $ \mathcal{A} $. 

Kamiya, Takemura, and Terao developed Athanasiadis' method by considering the complement of $ \mathcal{A}(\mathbb{Z}/q\mathbb{Z}) $ for all positive integers $ q $ as follows. 
For a nonempty subset $ J = \{j_{1}, \dots, j_{k}\} \subseteq [n] $, suppose that the matrix $ C_{J} \coloneqq (c_{j_{1}} \ \cdots \ c_{j_{k}}) $ has the Smith normal form 
\begin{align*}
\begin{pmatrix}
d_{J,1} & 0 & \cdots & 0 & \cdots & \cdots & 0 \\
0 & d_{J,2} & & \vdots & & & \vdots \\
\vdots & & \ddots & 0 & & & \\
0 & \cdots & 0 & d_{J, r(J)} & & & \\
\vdots & &&& 0 & & \\
\vdots &&&&& \ddots & \vdots \\
0 & \cdots & && &\cdots & 0
\end{pmatrix}, 
\end{align*}
where $ d_{J,i} $ is a positive integer such that $ d_{J,i} $ divides $ d_{J,i+1} $. 
Define $ \rho_{\mathcal{A}} \in \mathbb{Z}_{>0} $ by 
\begin{align*}
\rho_{\mathcal{A}} \coloneqq \lcm\Set{d_{J, r(J)} | \varnothing \neq J \subseteq [n]}. 
\end{align*}

%They proved that the function $ \left| M (\A(\mathbb{Z} / q \mathbb{Z}) ) \right| $ in $ q \in \mathbb{Z}_{>0} $ is a quasi-polynomial. 
%Namely there exists a positive integer $ \rho_{\mathcal{A}} $ and a polynomial $ f_{\mathcal{A}}^{k}(t) \in \mathbb{Z}[t] $ for each $ k \in \{1, \dots, \rho_{\mathcal{A}}\} $

\begin{theorem}[{Kamiya-Takemura-Terao \cite[Therem 2.4]{kamiya2008periodicity-joac}}] \label{KTT081} 
Let $ M (\A(\mathbb{Z} / q \mathbb{Z}) ) \coloneqq (\mathbb{Z}/q\mathbb{Z})^{\ell}\setminus\bigcup_{J \subseteq [n]}H_{J,q} $ denote the complement of $ \mathcal{A}(\mathbb{Z}/q\mathbb{Z}) $. 
Then the function $ \left|M (\A(\mathbb{Z} / q \mathbb{Z}) )\right| $ is a monic integral quasi-polynomial in $ q \in \mathbb{Z}_{>0} $ with a period $ \rho_{\mathcal{A}} $. 
Namely, there exist monic polynomials $ f_{\mathcal{A}}^{k}(t) \in \mathbb{Z}[t] \ (1 \leq k \leq \rho_{\A}) $ such that $ f_{\mathcal{A}}^{k}(q) = \left|M (\A(\mathbb{Z} / q \mathbb{Z}) )\right| $ if $ q \equiv k \pmod{\rho_{\A}} $. 
Furthermore, the quasi-polynomial has the \textbf{GCD-property}, that is, $ f_{\mathcal{A}}^{k}(t) = f_{\mathcal{A}}^{k^{\prime}}(t) $ when $ \gcd(k, \rho_{\mathcal{A}}) = \gcd(k^{\prime}, \rho_{\mathcal{A}}) $. 
\end{theorem}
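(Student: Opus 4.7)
The plan is to compute $|M(\mathcal{A}(\mathbb{Z}/q\mathbb{Z}))|$ by inclusion--exclusion and then reduce each intersection count to a product of gcds via the Smith normal form of $C_J$. Writing $H_{\varnothing, q} \coloneqq (\mathbb{Z}/q\mathbb{Z})^\ell$, inclusion--exclusion yields
\begin{align*}
|M(\mathcal{A}(\mathbb{Z}/q\mathbb{Z}))| = \sum_{J \subseteq [n]} (-1)^{|J|} |H_{J,q}|,
\end{align*}
so it suffices to understand $|H_{J,q}|$ for each $J$. For $J \neq \varnothing$, I would pick matrices $P \in \GL_\ell(\mathbb{Z})$ and $Q \in \GL_{|J|}(\mathbb{Z})$ realizing the Smith normal form $P C_J Q = D_J$, and note that the substitution $\boldsymbol{y} = \boldsymbol{x} P^{-1}$ and right-multiplication by $Q$ are bijections of $(\mathbb{Z}/q\mathbb{Z})^\ell$ and $(\mathbb{Z}/q\mathbb{Z})^{|J|}$ respectively. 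This reduces the congruence $\boldsymbol{x} C_J \equiv 0 \pmod{q}$ to the diagonal system $d_{J,i} y_i \equiv 0 \pmod{q}$ for $i = 1, \dots, r(J)$, with $y_{r(J)+1}, \dots, y_\ell$ unconstrained. A coordinate-by-coordinate count then gives
\begin{align*}
|H_{J,q}| = q^{\ell - r(J)} \prod_{i=1}^{r(J)} \gcd(d_{J,i}, q).
\end{align*}

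Next, I would use the standard fact that $q \mapsto \gcd(d, q)$ is periodic with period $d$. Since $d_{J,i} \mid d_{J, r(J)} \mid \rho_{\mathcal{A}}$ for every $J$ and $i$, each $\gcd(d_{J,i}, q)$ depends only on $q \bmod \rho_{\mathcal{A}}$. Consequently, for any residue $k$, the polynomial
\begin{align*}
f_{\mathcal{A}}^k(t) \coloneqq \sum_{J \subseteq [n]} (-1)^{|J|} t^{\ell - r(J)} \prod_{i=1}^{r(J)} \gcd(d_{J,i}, k)
\end{align*}
agrees with $|M(\mathcal{A}(\mathbb{Z}/q\mathbb{Z}))|$ whenever $q \equiv k \pmod{\rho_{\mathcal{A}}}$. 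The only summand of degree $\ell$ is $J = \varnothing$, contributing $t^\ell$; all other terms satisfy $r(J) \geq 1$ and so have degree at most $\ell - 1$. Hence each $f^k_{\mathcal{A}}(t)$ is a monic integral polynomial, establishing the quasi-polynomial claim with period $\rho_{\mathcal{A}}$.

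For the GCD-property, I would reduce to the elementary identity: whenever $d \mid \rho_{\mathcal{A}}$, one has $\gcd(d, k) = \gcd(d, \gcd(k, \rho_{\mathcal{A}}))$. Indeed, $\gcd(d, k)$ divides $\gcd(k, \rho_{\mathcal{A}})$ because $d \mid \rho_{\mathcal{A}}$, while any common divisor of $d$ and $\gcd(k, \rho_{\mathcal{A}})$ divides $k$ since $\gcd(k, \rho_{\mathcal{A}}) \mid k$. Applying this to every $d_{J,i}$ in the explicit formula for $f^k_{\mathcal{A}}$ shows that $f^k_{\mathcal{A}} = f^{k'}_{\mathcal{A}}$ whenever $\gcd(k, \rho_{\mathcal{A}}) = \gcd(k', \rho_{\mathcal{A}})$. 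I expect the only delicate point to be Step~2: verifying that the unimodular change of basis descends to a genuine bijection modulo $q$ and preserves the diagonalization. This is straightforward because $P$ and $Q$ remain invertible after reduction modulo $q$, but it is the one place where the integrality of the Smith decomposition is used in an essential way.
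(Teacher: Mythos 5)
Your proof is correct; it is essentially the original Kamiya--Takemura--Terao argument for $\mathbb{Z}$, which the paper cites without reproving. The paper instead proves the generalization to an arbitrary residually finite Dedekind domain $\mathcal{O}$ (Theorem~\ref{main thm}), and the skeleton is the same as yours: inclusion--exclusion reduces to counting $|H_{J,\mathfrak{a}}|$, each such count is a quasi-monomial whose period is the last invariant factor, and the LCM of these periods is a period of the alternating sum. Where you differ is the counting step. You diagonalize $C_J$ via the Smith normal form and count solutions to $d_{J,i}\,y_i \equiv 0 \pmod{q}$ coordinatewise, a concrete matrix argument available because $\mathbb{Z}$ is a PID. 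Over a general Dedekind domain no Smith normal form exists, so the paper instead applies the structure theorem for finitely generated $\mathcal{O}$-modules to $\coker\phi_J$ and reduces modulo $\mathfrak{a}$ using the right exactness of $(-)\otimes_{\mathcal{O}}\mathcal{O}/\mathfrak{a}$, obtaining $\coker\phi_{J,\mathfrak{a}} \simeq \bigoplus_i \mathcal{O}/(\mathfrak{a}+\mathfrak{d}_{J,i}) \oplus (\mathcal{O}/\mathfrak{a})^{|J|-r(J)}$; your factor $\gcd(d_{J,i},q)$ is exactly $N(\langle q\rangle + \langle d_{J,i}\rangle)$ in this language. The two arguments coincide over $\mathbb{Z}$, where the Smith normal form is the matrix shadow of the module structure theorem, but the paper's module-theoretic phrasing is what makes the Dedekind-domain generalization run. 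Your GCD-property argument (that $d \mid \rho_{\mathcal{A}}$ implies $\gcd(d,k)=\gcd(d,\gcd(k,\rho_{\mathcal{A}}))$) is the same elementary divisibility observation the paper uses, written with integer gcd rather than ideal sums.
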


\begin{definition}
We call the quasi-polynomial $ \chi^{\mathrm{quasi}}_{\A} (q) \coloneqq \left|M (\A(\mathbb{Z} / q \mathbb{Z}) )\right| $ the \textbf{characteristic quasi-polynomial} of $ \mathcal{A} $. 
The period $ \rho_{\mathcal{A}} $ is called the \textbf{LCM-period}. 
The polynomial $ f_{\mathcal{A}}^{k}(t) $ is said to be the \textbf{$ k $-constituent} of $ \chi^{\mathrm{quasi}}_{\A} (q) $. 
\end{definition}

Interestingly enough, each constituent of the characteristic quasi-polynomial has a combinatorial interpretation. 

\begin{theorem}[{Kamiya-Takemura-Terao \cite[Therem 2.5]{kamiya2008periodicity-joac}}] \label{KTT082}
The $ 1 $-constituent of the characteristic quasi-polynomial of $ \mathcal{A} $ is the characteristic polynomial of the hyperplane arrangement $ \mathcal{A}(\mathbb{R}) $. 
Namely, $f^{1}_{\A} (t) = \chi_{\A (\mathbb{R})} (t)$. 
\end{theorem}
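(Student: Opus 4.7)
The plan is to exploit the quasi-polynomial structure and Möbius inversion on the intersection poset. Since $f^{1}_{\A}(t)$ is a polynomial and by Theorem \ref{KTT081} we have $f^{1}_{\A}(q) = |M(\A(\mathbb{Z}/q\mathbb{Z}))|$ for every $q \equiv 1 \pmod{\rho_{\A}}$, and since by the GCD-property $f^{1}_{\A}(t) = f^{k}_{\A}(t)$ whenever $\gcd(k,\rho_{\A}) = 1$, it actually suffices to prove that $|M(\A(\mathbb{Z}/q\mathbb{Z}))| = \chi_{\A(\mathbb{R})}(q)$ for all positive integers $q$ with $\gcd(q, \rho_{\A}) = 1$; two polynomials that agree on infinitely many integers are equal.

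Fix such a $q$. I would compute $|M(\A(\mathbb{Z}/q\mathbb{Z}))|$ by inclusion–exclusion on the intersection poset. For each $\varnothing \neq J \subseteq [n]$, the subgroup $H_{J,q} = \{[\boldsymbol{x}]_{q} \mid \boldsymbol{x}C_{J} \equiv 0 \pmod{q}\}$ can be analyzed via the Smith normal form of $C_{J}$: after multiplying by the unimodular matrices diagonalizing $C_{J}$, $H_{J,q}$ is identified with the kernel of the map $(\mathbb{Z}/q\mathbb{Z})^{\ell} \to \prod_{i=1}^{r(J)} \mathbb{Z}/d_{J,i}\mathbb{Z}$ sending $(x_1,\dots,x_\ell)$ to $(d_{J,1}x_{1}, \dots, d_{J,r(J)}x_{r(J)})$. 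Since every $d_{J,i}$ divides $d_{J,r(J)} \mid \rho_{\A}$, the hypothesis $\gcd(q,\rho_{\A})=1$ implies that each $d_{J,i}$ is a unit modulo $q$, so this map is surjective onto $(\mathbb{Z}/q\mathbb{Z})^{r(J)}$, giving
\begin{align*}
|H_{J,q}| = q^{\ell - r(J)} = q^{\dim H_{J}}.
\end{align*}

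Next I would check that the map $H_{J} \mapsto H_{J,q}$ induces an isomorphism of posets $L(\A(\mathbb{R})) \to L(\A(\mathbb{Z}/q\mathbb{Z}))$, where the latter is defined as $\{H_{J,q} \mid J \subseteq [n]\}$ ordered by reverse inclusion. Indeed, on one hand $H_{J} \subseteq H_{J'}$ clearly forces $H_{J,q} \subseteq H_{J',q}$; conversely, $H_{J,q} \subseteq H_{J',q}$ combined with the rank formula above forces $\rank(C_{J \cup J'}) = \rank(C_{J})$ over $\mathbb{Z}$ (since both sides equal the corresponding mod-$q$ ranks under our coprimality hypothesis), hence $H_{J \cup J'} = H_{J}$ in $\mathbb{R}^{\ell}$, i.e.\ $H_{J} \subseteq H_{J'}$. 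Consequently the Möbius functions on the two posets coincide under this identification.

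Having matched the posets and computed the cardinalities, Möbius inversion (Whitney's theorem) yields
\begin{align*}
|M(\A(\mathbb{Z}/q\mathbb{Z}))| = \sum_{Z \in L(\A(\mathbb{Z}/q\mathbb{Z}))} \mu(Z)\,|Z| = \sum_{Z \in L(\A(\mathbb{R}))} \mu(Z)\, q^{\dim Z} = \chi_{\A(\mathbb{R})}(q),
\end{align*}
which completes the argument. The main obstacle is the poset-isomorphism step: one must verify that the Smith normal form hypothesis is strong enough to prevent ``accidental'' coincidences $H_{J,q} = H_{J',q}$ that do not arise from $H_{J} = H_{J'}$ over $\mathbb{R}$, and this is precisely why the period has to involve the largest invariant factor $d_{J,r(J)}$ for every $J$, not merely the determinants of full-rank submatrices.
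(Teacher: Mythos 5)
Your argument is correct, and its engine is the same as the paper's: use the Smith normal form to compute $|H_{J,q}| = q^{\ell - r(J)}$ for $q$ coprime to $\rho_{\mathcal{A}}$, apply inclusion--exclusion, and identify the result with $\chi_{\A(\mathbb{R})}$. Where you part ways with the paper is in the packaging. The paper's proof of the generalization (Theorem~\ref{main thm1.5}) is essentially two lines: Theorem~\ref{main thm} already gives the constituent formula $f_{\A}^{\langle 1 \rangle}(t) = \sum_{J\subseteq[n]}(-1)^{|J|}t^{\ell - r(J)}$, because the torsion factor $m(J,\kappa)$ is identically $1$ when $\kappa$ is the unit ideal, and Whitney's theorem identifies this sum with $\chi_{\A(K)}(t)$. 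You instead build a poset isomorphism $L(\A(\mathbb{R})) \cong L(\A(\mathbb{Z}/q\mathbb{Z}))$ and run M\"{o}bius inversion on the mod-$q$ side. That step is valid under the coprimality hypothesis, but it is not needed: once $|H_{J,q}| = q^{\ell - r(J)}$, inclusion--exclusion gives $|M(\A(\mathbb{Z}/q\mathbb{Z}))| = \sum_{J}(-1)^{|J|}q^{\ell - r(J)}$, and Whitney's theorem already says this right-hand side is $\chi_{\A(\mathbb{R})}(q)$, without ever examining the structure of $L(\A(\mathbb{Z}/q\mathbb{Z}))$. So the ``main obstacle'' you flag at the end is a detour, not a bottleneck. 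Two precision notes on the detour itself: the forward direction ``$H_J \subseteq H_{J'}$ clearly forces $H_{J,q}\subseteq H_{J',q}$'' is \emph{not} a formal lifting fact and genuinely fails for general $q$ (take $c_{1} = 2c_{2}$ and $q=2$); under coprimality it should be justified by the same cardinality comparison $|H_{J\cup J',q}| = |H_{J,q}|$ you use for the converse, rather than taken as obvious. And the opening appeal to the GCD-property is superfluous --- restricting to $q\equiv 1\pmod{\rho_{\A}}$ already supplies infinitely many evaluation points, which is all the polynomial-identity argument requires.
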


\begin{theorem}[{Liu--Tran--Yoshinaga \cite[Corollary 5.6]{liu2021$g$-tutte-imrn}}] \label{LTY17}
The $ \rho_{\A} $-constituent of the characteristic quasi-polynomial of $ \mathcal{A} $ is the characteristic polynomial of the toric arrangement $ \mathcal{A}(\mathbb{C}^{\times}) $. 
Namely, $f^{\rho_{\mathcal{A}}}_{\A} (t) = \chi_{\A (\mathbb{C}^{\times})} (t)$. 
\end{theorem}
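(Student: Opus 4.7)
The plan is to interpret both sides via Möbius inversion on the same intersection poset, which one gets by decomposing each modular intersection $H_{J,q}$ into its ``connected components'' and showing that, as soon as $\rho_{\mathcal{A}} \mid q$, this component poset is canonically isomorphic to the toric intersection poset $L(\mathcal{A}(\mathbb{C}^{\times}))$.

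First I would fix $q$ with $\rho_{\mathcal{A}} \mid q$ and analyze $H_{J,q}$ via the Smith normal form of $C_J$. Writing $C_J = U\,\mathrm{diag}(d_{J,1},\dots,d_{J,r(J)},0,\dots,0)\,V$ with $U,V$ unimodular, the unimodular change of coordinates on $(\mathbb{Z}/q\mathbb{Z})^{\ell}$ given by $U$ identifies $H_{J,q}$ with the set $\{\boldsymbol{y} : d_{J,i} y_i \equiv 0 \pmod q,\ 1 \le i \le r(J)\}$. Since every $d_{J,i}$ divides $\rho_{\mathcal{A}}$ and hence $q$, the $i$-th coordinate $y_i$ ranges freely over the cyclic subgroup $(q/d_{J,i})\mathbb{Z}/q\mathbb{Z}$ of order $d_{J,i}$, and the last $\ell - r(J)$ coordinates are free. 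This yields $|H_{J,q}| = \bigl(\prod_{i=1}^{r(J)} d_{J,i}\bigr)\, q^{\ell - r(J)}$, and moreover exhibits $H_{J,q}$ as a disjoint union of exactly $\prod_i d_{J,i}$ affine ``translates,'' each an $(\mathbb{Z}/q\mathbb{Z})$-module of rank $\ell - r(J)$. Since $\prod_i d_{J,i}$ is precisely the number of connected components of $T_J \subseteq (\mathbb{C}^{\times})^{\ell}$ (by the same Smith normal form argument, cf.\ Moci's description), this already produces a natural bijection between the components of $H_{J,q}$ and the components of $T_J$.

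Next I would upgrade this set bijection to a poset isomorphism. Let $L_q$ denote the poset (ordered by reverse inclusion) of all connected components of the various $H_{J,q}$, together with $(\mathbb{Z}/q\mathbb{Z})^{\ell}$ as minimum. The crucial step is to show that if $Z \in L_q$ comes from a component of $H_{J,q}$ and $Z' \in L_q$ comes from a component of $H_{J',q}$, then $Z' \subseteq Z$ in $L_q$ if and only if the corresponding components $W'$ of $T_{J'}$ and $W$ of $T_J$ satisfy $W' \subseteq W$ in $L(\mathcal{A}(\mathbb{C}^{\times}))$. Again using the Smith normal form of the merged matrix $C_{J \cup J'}$ together with the hypothesis $\rho_{\mathcal{A}} \mid q$, one shows that the lattice of submodules of $(\mathbb{Z}/q\mathbb{Z})^{\ell}$ generated by the $H_{J,q}$ is order-isomorphic to the lattice of sublayers of $(\mathbb{C}^{\times})^{\ell}$ generated by the $T_J$; this is exactly where the full period $\rho_{\mathcal{A}}$ is needed, so that no ``extra'' identifications appear from $\gcd(d_{J,i},q) < d_{J,i}$.

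Finally, I would conclude by standard Möbius counting. On the one hand, inclusion–exclusion on $L_q$ gives
\begin{equation*}
\bigl|M(\mathcal{A}(\mathbb{Z}/q\mathbb{Z}))\bigr| \;=\; \sum_{Z \in L_q} \mu_{L_q}(Z)\,|Z|,
\end{equation*}
and by the preceding step $|Z| = q^{\dim W}$ where $W$ is the corresponding component of the toric intersection. On the other hand, transporting the Möbius function through the poset isomorphism $L_q \xrightarrow{\sim} L(\mathcal{A}(\mathbb{C}^{\times}))$ identifies this sum with $\sum_{W} \mu(W)\,q^{\dim W} = \chi_{\mathcal{A}(\mathbb{C}^{\times})}(q)$. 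Since this holds for all $q$ divisible by $\rho_{\mathcal{A}}$ (in particular infinitely many), the identity of polynomials $f^{\rho_{\mathcal{A}}}_{\mathcal{A}}(t) = \chi_{\mathcal{A}(\mathbb{C}^{\times})}(t)$ follows.

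The main obstacle I anticipate is the poset-isomorphism step: verifying that the incidence relations among \emph{components} (not just among the ambient subgroups $H_{J,q}$) are preserved on both sides. Elementary Smith-normal-form bookkeeping handles the counts and the dimensions, but matching which component of $H_{J',q}$ lies in which component of $H_{J,q}$ requires working with the $C_{J\cup J'}$ Smith form simultaneously and using the divisibility $d_{J,r(J)} \mid \rho_{\mathcal{A}} \mid q$ in an essential way; this is the content that fails for general $q$ and explains why the statement is only about the $\rho_{\mathcal{A}}$-constituent.
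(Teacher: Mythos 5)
Your overall strategy is sound but follows a genuinely different route from the one the paper takes. The paper does not re-prove Theorem~\ref{LTY17} directly; it establishes the common generalization (Theorem~\ref{main thm2}) by working intrinsically with the poset of layers of the torsion arrangement, using two independent ingredients: the identity $\mu(T_{\varnothing}, Z) = \sum_{J \in D(Z)} (-1)^{|J|}$ (Lemma~\ref{Moci lemma}, after Moci) and the count of $\kappa$-torsion layers of $T_J/\pi(H_J)$ via the invariant factors (Theorem~\ref{num. of conn. comp.'s}). These give $f^{\kappa}_{\mathcal{A}}(t) = \chi^{\kappa}_{\mathcal{A}(K/\mathcal{O})}(t)$ by a plain swap of summation, with no poset isomorphism at any fixed $q$. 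You instead fix $q$ with $\rho_{\mathcal{A}} \mid q$ and aim to construct an isomorphism between the component poset $L_q$ of the $H_{J,q}$ and $L(\mathcal{A}(\mathbb{C}^{\times}))$. The cleanest realization of that map is not the ``merged matrix $C_{J\cup J'}$'' bookkeeping you propose, but rather the identification of $H_{J,q}$ with the $q$-torsion subgroup of $T_J$: send each connected component $W$ of $T_J$ to its set of $q$-torsion points, which is a full coset of the $q$-torsion of the identity component precisely because $d_{J,r(J)} \mid q$; order-reflexivity then follows from a cardinality argument showing that inclusion of $q$-torsion subgroups of subtori (for a single $q > 1$) forces inclusion of the subtori. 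Your route makes the role of $\rho_{\mathcal{A}}$ geometrically transparent and is tailored to the extremal constituent, while the paper's route handles every $\kappa \mid \rho_{\mathcal{A}}$ uniformly and cleanly separates the poset combinatorics from the module theory. One step you should not elide: the M\"obius-inversion formula you invoke on $L_q$ requires that, for each point $x$, the set of components of $L_q$ containing $x$ be an interval with a unique top element --- this is the analogue of Lemma~\ref{D(Z)} and needs the same kind of argument there.
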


Tran and Yoshinaga \cite{tran2019combinatorics-joctsa} gave combinatorial interpretation for the other constituents and unified Theorem \ref{KTT082} and Theorem \ref{LTY17}. 
For a positive integer $k$, define 
\begin{align*}
L (\A (\mathbb{C}^{\times}))[k] \coloneqq \Set{Z \in L (\A (\mathbb{C}^{\times})) | \mbox{$Z$ contains a $k$-torsion element}}, 
\end{align*} 
\begin{align*}
\chi_{\A (\mathbb{C}^{\times})}^{k} (t) \coloneqq \sum_{Z \in L (\A (\mathbb{C}^{\times}))[k]} \mu (Z) t^{\dim Z}. 
\end{align*}
Note that $\chi_{\A (\mathbb{C}^{\times})}^{1} (t) = \chi_{\A (\mathbb{R})} (t) $ and $ \chi_{\A (\mathbb{C}^{\times})}^{\rho_{\mathcal{A}}} (t) = \chi_{\A (\mathbb{C}^{\times})} (t)$. 

\begin{theorem}[{Tran-Yoshinaga \cite[Corollary 4.8]{tran2019combinatorics-joctsa}}] \label{TY19}
Let $ 1 \leq k \leq \rho_{\mathcal{A}}$. 
Then $f^{k}_{\A} (t) = \chi_{\A (\mathbb{C}^{\times})}^{k} (t)$. 
\end{theorem}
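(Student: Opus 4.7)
The plan is to compute both $f^k_{\A}(t)$ and $\chi^k_{\A(\mathbb{C}^\times)}(t)$ via inclusion--exclusion over subsets $J \subseteq [n]$ and then match the resulting expressions term-by-term. In both computations the Smith normal form of $C_J$ will drive the calculation: the diagonal entries $d_{J,1},\dots,d_{J,r(J)}$ will control both the point count of $H_{J,q}$ modulo $q$ and the connected component structure of $T_J$.

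On the modular side, inclusion--exclusion gives $|M(\A(\mathbb{Z}/q\mathbb{Z}))| = \sum_{J\subseteq[n]} (-1)^{|J|} |H_{J,q}|$. A Smith normal form computation yields
$$|H_{J,q}| \;=\; q^{\ell - r(J)} \prod_{i=1}^{r(J)} \gcd(d_{J,i},q).$$
For $q \equiv k \pmod{\rho_{\A}}$, since $d_{J,i} \mid \rho_{\A}$, we have $\gcd(d_{J,i},q)=\gcd(d_{J,i},k)$, so
$$f^{k}_{\A}(t) \;=\; \sum_{J\subseteq[n]} (-1)^{|J|}\, t^{\ell - r(J)} \prod_{i=1}^{r(J)} \gcd(d_{J,i},k).$$

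On the toric side, the same Smith normal form $C_J = UDV$ with $U,V$ unimodular gives a monomial change of coordinates on $(\mathbb{C}^\times)^\ell$ under which $T_J$ becomes $\mu_{d_{J,1}} \times \cdots \times \mu_{d_{J,r(J)}} \times (\mathbb{C}^\times)^{\ell - r(J)}$. So $T_J$ has $\prod_i d_{J,i}$ connected components, each of dimension $\ell - r(J)$. Because the change of coordinates is integral, $k$-torsion is preserved, and a component contains a $k$-torsion point iff each finite coordinate lies in $\mu_{d_{J,i}} \cap \mu_k = \mu_{\gcd(d_{J,i},k)}$ (the $(\mathbb{C}^\times)^{\ell - r(J)}$ factor always contributes $k$-torsion). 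Hence
$$\#\{Z : Z \text{ is a component of } T_J,\ Z \in L(\A(\mathbb{C}^\times))[k]\} \;=\; \prod_{i=1}^{r(J)} \gcd(d_{J,i},k).$$

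The final step is a Whitney-type identity for the toric intersection poset, namely
$$\mu(Z) \;=\; \sum_{J\,:\, Z \text{ is a component of } T_J} (-1)^{|J|},$$
which I would prove by induction on the rank of $Z$ using the defining recursion of $\mu$, being careful about the semilattice structure. Summing $\mu(Z) t^{\dim Z}$ over $Z \in L(\A(\mathbb{C}^\times))[k]$ and swapping the order of summation then yields exactly the formula for $f^k_{\A}(t)$ above. The main obstacle I expect is this last step: because $T_J$ typically has several connected components, $L(\A(\mathbb{C}^\times))$ is only a geometric semilattice, and the standard Whitney argument for central hyperplane arrangements must be adapted. The key point that makes the restriction to $L[k]$ work is that $Z \in L[k]$ is a property of $Z$ alone, so it can be factored out of the inner sum over $J$ without interfering with the Möbius computation.
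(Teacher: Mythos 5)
Your proposal is correct and follows essentially the same route as the paper's proof of the generalized statement (Theorem \ref{main thm2}, specialized to $\mathcal{O}=\mathbb{Z}$): inclusion--exclusion over $J\subseteq[n]$ on both sides, the invariant factors $d_{J,i}$ controlling both $|H_{J,q}|$ and the count of $k$-torsion components of $T_J$, and the Whitney-type identity $\mu(Z)=\sum_{J\in D(Z)}(-1)^{|J|}$ (Lemma \ref{Moci lemma}, going back to Moci) as the bridge, followed by a swap of summation. The step you flag as the main obstacle---establishing that Whitney-type identity in the semilattice of layers---is precisely what the paper's Lemma \ref{D(Z)} feeds into the inductive proof of Lemma \ref{Moci lemma}, and your observation that membership in $L[k]$ is intrinsic to $Z$ is exactly what makes the $k$-torsion restriction pass harmlessly through the exchange of sums.
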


For a decade, it was an open problem whether the LCM-period is minimum or not. 
Recently, using Theorem \ref{TY19}, Higashitani, Tran, and Yoshinaga gave an affirmative answer for central arrangements. 

\begin{theorem}[{Higashitani-Tran-Yoshinaga \cite[Theorem 1.2]{higashitani2022period-imrn}}] \label{HTY21}
The LCM-period $\rho_{\mathcal{A}}$ is the minimum period of the characteristic quasi-polynomial $\chi^{\mathrm{quasi}}_{\A} (q)$. 
\end{theorem}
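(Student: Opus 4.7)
The plan is to combine the GCD-property with Theorem \ref{TY19} and reduce the minimum-period claim to a non-vanishing statement about Möbius values in the toric intersection poset. First I would observe that by the GCD-property in Theorem \ref{KTT081}, the constituent $f^{k}_{\mathcal{A}}(t)$ depends only on $\gcd(k,\rho_{\mathcal{A}})$. A short argument shows that any period of $\chi^{\mathrm{quasi}}_{\mathcal{A}}$ that properly divides $\rho_{\mathcal{A}}$ must be contained in some $\rho_{\mathcal{A}}/p$ with $p$ a prime dividing $\rho_{\mathcal{A}}$; hence it suffices to prove
\[
f^{\rho_{\mathcal{A}}}_{\mathcal{A}}(t) \neq f^{\rho_{\mathcal{A}}/p}_{\mathcal{A}}(t) \qquad \text{for every prime } p\mid \rho_{\mathcal{A}}.
\]
Applying Theorem \ref{TY19} to both sides rewrites this difference intrinsically as
\[
f^{\rho_{\mathcal{A}}}_{\mathcal{A}}(t) - f^{\rho_{\mathcal{A}}/p}_{\mathcal{A}}(t) = \sum_{Z} \mu(Z)\, t^{\dim Z},
\]
where the sum runs over strata $Z\in L(\mathcal{A}(\mathbb{C}^\times))$ that contain a $\rho_{\mathcal{A}}$-torsion point but no $(\rho_{\mathcal{A}}/p)$-torsion point.

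Next I would construct an explicit stratum $Z_0$ witnessing that this difference is nonzero. Because $\rho_{\mathcal{A}} = \lcm_{\varnothing \neq J\subseteq[n]} d_{J,r(J)}$, there exists some $J_0 \subseteq [n]$ whose top Smith invariant $d_{J_0, r(J_0)}$ absorbs the full $p$-adic part $p^{v_p(\rho_{\mathcal{A}})}$ of $\rho_{\mathcal{A}}$; I would choose such a $J_0$ minimal under inclusion. From the Smith normal form of $C_{J_0}$, the connected components of $T_{J_0}$ are parameterized by the torsion subgroup of $\mathbb{Z}^{\ell}/C_{J_0}\mathbb{Z}^{|J_0|}$, and one can select a component $Z_0$ whose associated cyclic summand has order exactly $d_{J_0,r(J_0)}$. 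Every torsion point lying on $Z_0$ then has order divisible by $p^{v_p(\rho_{\mathcal{A}})}$, so $Z_0 \not\in L(\mathcal{A}(\mathbb{C}^{\times}))[\rho_{\mathcal{A}}/p]$, while by minimality of $J_0$ every stratum strictly below $Z_0$ does contain a $(\rho_{\mathcal{A}}/p)$-torsion point and thus cancels out of the difference.

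The main obstacle is proving $\mu(Z_0) \neq 0$. My approach is to localize at $Z_0$: the interval $[\hat 0, Z_0]$ in $L(\mathcal{A}(\mathbb{C}^{\times}))$ is isomorphic to the intersection lattice of the localized arrangement $\mathcal{A}_{J_0}$ at the component $Z_0$, and $\mu(Z_0)$ is the corresponding top Möbius coefficient. Minimality of $J_0$ forces the localized system to be essential in the relevant Smith-normal-form sense, so one can invoke the classical non-vanishing of Möbius values on essential central flats (or establish it directly from the Smith normal form, where the relevant determinant is $\pm d_{J_0,r(J_0)} \neq 0$) to conclude $\mu(Z_0) \neq 0$. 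This non-vanishing is the heart of the argument; the preceding reductions through the GCD-property and the toric interpretation of Theorem \ref{TY19} are largely formal.
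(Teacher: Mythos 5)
Your overall strategy is sound and, up to a reorganization, matches the one the paper (following Higashitani--Tran--Yoshinaga) actually uses: reduce via the GCD-property to showing $f_{\A}^{\rho_{\A}}(t)\neq f_{\A}^{\rho_{\A}/p}(t)$ for each prime $p \mid \rho_{\A}$, rewrite the difference via Theorem~\ref{TY19} as a sum of M\"obius values over the layers that are $\rho_{\A}$-torsion but not $(\rho_{\A}/p)$-torsion, and produce a witnessing layer $Z_0$ with the top invariant factor realizing the full $p$-part of $\rho_{\A}$. The existence of such $Z_0$ does follow from the $\lcm$ definition of $\rho_{\A}$ (this is the content of Lemma~\ref{tau lemma 2}), and $\mu(Z_0)\neq 0$ is automatic because $[\hat 0, Z_0]$ is a geometric lattice (Lemma~\ref{interval}); your appeal to ``essentiality'' is unnecessary and a bit of a red herring, but the conclusion is right.

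However, there is a genuine gap at the final step. Knowing $\mu(Z_0)\neq 0$ and $Z_0\in S$ (where $S$ is the set of contributing layers) does not by itself show that $\sum_{Z\in S}\mu(Z)\,t^{\dim Z}\neq 0$: there may be several layers in $S$ at the same dimension as $Z_0$, and their M\"obius values could conceivably cancel. Your remark that ``every stratum strictly below $Z_0$ contains a $(\rho_{\A}/p)$-torsion point and thus cancels out of the difference'' speaks to layers at a \emph{different} dimension (those comparable to $Z_0$ in the poset), not to the other layers at the same dimension, which is where cancellation would actually happen in the coefficient of $t^{\dim Z_0}$. The missing ingredient is precisely the sign-alternation of the M\"obius function on the poset of layers: $(-1)^{\ell - \dim Z}\mu(\hat 0, Z)>0$ for every $Z$ (in the paper this is Lemma~\ref{interval}(\ref{interval 3}), proved via the geometric-lattice interval isomorphism). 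Once you have that, every term in the coefficient of $t^{\dim Z_0}$ of your difference has the same sign, so the nonempty sum is nonzero and the proof closes. The paper's own argument is organized slightly differently --- it decomposes $\chi_{\A}^{\mathrm{quasi}}$ coefficient by coefficient and shows each $d_r$ has minimum period $\rho_r = \lcm\{\tau(Z) : \dim Z = r\}$ --- but it invokes exactly this sign-alternation at the analogous point, so your argument, once patched, is essentially the same proof reorganized around a single prime rather than a single degree.
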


\begin{remark}
The characteristic quasi-polynomial its LCM-period can be considered for non-central arrangements \cite{kamiya2011periodicity-aoc}. 
Higashitani, Tran, and Yoshinaga \cite{higashitani2022period-imrn} also studied non-central arrangements such that the LCM-periods are not minimum. 
\end{remark}

The properties of the characteristic quasi-polynomial highly depend on ring-theoretic properties of $ \mathbb{Z} $. 
In particular, the theory of Smith normal forms and their invariant factors are essential. 
It is a natural question to ask what occurs if we replace the ring $ \mathbb{Z} $ with another ring $ \mathcal{O} $. 
Let $ \mathcal{A} $ be a finite subset of $ \mathcal{O}^{\ell} $ and we will investigate the function $ \left| M(\mathcal{A}(\mathcal{O}/\mathfrak{a})) \right| $ in nonzero ideals of $ \mathfrak{a} $. 
In order to do this, $ \left| M(\mathcal{A}(\mathcal{O}/\mathfrak{a})) \right| $ must be finite for every $ \mathfrak{a} $. 
Moreover, the theory of finitely generated modules of Dedekind domains is considered to be a direct generalization of the theory of Smith normal forms. 

For the reasons above, in this article, we will consider the function $ \left| M(\mathcal{A}(\mathcal{O}/\mathfrak{a})) \right| $ under the following convention. 
\begin{itemize}
\item $ \mathcal{O} $ is a Dedekind domain, that is, an integral domain in which every nonzero proper ideal has a unique factorization into prime ideals. 
\item $ \mathcal{O}/\mathfrak{a} $ is a finite ring for every nonzero ideal $ \mathfrak{a} $. 
\end{itemize}
Such a ring $ \mathcal{O} $ is called a \textbf{residually finite Dedekind domain} or a \textbf{Dedekind domain with the finite norm property}. 
The ring $ \mathbb{Z} $ is a typical example of a residually finite Dedekind domain. 
More generally, it is well known that the ring of integers of an algebraic field is a residually finite Dedekind domain. 
The polynomial ring $ \mathbb{F}_{q}[t] $ over a finite field $ \mathbb{F}_{q} $ and the ring of $ p $-adic integers $ \mathbb{Z}_{p} $ are also examples of residually finite Dedekind domains. 

The purpose of this paper is to give algebraic generalizations of Theorem \ref{KTT081}, \ref{KTT082}, \ref{LTY17}, \ref{TY19}, and \ref{HTY21} for a finite subset $ \mathcal{A} = \{c_{1}, \dots, c_{n}\} $ of $ \mathcal{O}^{\ell} $. 
Let $ K \coloneqq \Frac \mathcal{O} $ denote the fraction filed of $ \mathcal{O} $. 
We will consider the following three kinds of arrangements: 

The first is the \textbf{hyperplane arrangement} $ \mathcal{A}(K) = \{H_{1}, \dots, H_{n}\} $, where 
\begin{align*}
H_{j} \coloneqq \Set{\boldsymbol{x} \in K^{\ell} | \boldsymbol{x}c_{j} = 0}. 
\end{align*}

The second is the \textbf{$ \mathfrak{a} $-reduced arrangement} $ \mathcal{A}(\mathcal{O}/\mathfrak{a}) = \{H_{1,\mathfrak{a}}, \dots, H_{n, \mathfrak{a}}\} $ for every nonzero ideal $ \mathfrak{a} \subseteq \mathcal{O} $, where 
\begin{align*}
H_{j, \mathfrak{a}} \coloneqq \Set{[\boldsymbol{x}]_{\mathfrak{a}} \in (\mathcal{O}/\mathfrak{a})^{\ell} | \boldsymbol{x}c_{j} \equiv 0 \pmod{\mathfrak{a}}}
\end{align*}
and $ [\boldsymbol{x}]_{\mathfrak{a}} $ denotes the equivalent class of $ \boldsymbol{x} $. 

The third is the \textbf{torsion arrangement} $ \mathcal{A}(K/\mathcal{O}) = \{T_{1}, \dots, T_{n}\} $, where 
\begin{align*}
T_{j} \coloneqq \Set{\pi(\boldsymbol{x}) \in (K/\mathcal{O})^{\ell} | \boldsymbol{x}c_{j} \equiv 0 \pmod{\mathcal{O}}}
\end{align*}
and $ \pi \colon K \to K/\mathcal{O} $ denotes the canonical projection. 
Note that $ \mathbb{Q} = \Frac \mathbb{Z} $ and if $ \mathcal{A} \subseteq \mathbb{Z}^{\ell} $, then $ \mathcal{A}(\mathbb{R}) $ and $ \mathcal{A}(\mathbb{C}^{\times}) $ are combinatorially equivalent to $ \mathcal{A}(\mathbb{Q}) $ and $ \mathcal{A}(\mathbb{Q}/\mathbb{Z}) $, respectively. 
The torsion arrangement $ \mathcal{A}(K/\mathcal{O}) $ will work as a substitute of the toric arrangement. 

Let $ I(\mathcal{O}) $ be the set consisting nonzero ideals of $ \mathcal{O} $. 
For every $ \mathfrak{a} \in I(\mathcal{O}) $, define the \textbf{complement} $ M(\mathcal{A}(\mathcal{O}/\mathfrak{a})) $ by 
\begin{align*}
M(\mathcal{A}(\mathcal{O}/\mathfrak{a})) \coloneqq \left(\mathcal{O}/\mathfrak{a}\right)^{\ell} \setminus \bigcup_{j = 1}^{n} H_{j, \mathfrak{a}}. 
\end{align*}

\begin{definition}
The function $ \chi_{\mathcal{A}}^{\mathrm{quasi}} \colon I(\mathcal{O}) \to \mathbb{Z} $ determined by $ \chi_{\mathcal{A}}^{\mathrm{quasi}}(\mathfrak{a}) \coloneqq |M(\mathcal{A}(\mathcal{O}/\mathfrak{a})) | $ is called the \textbf{characteristic quasi-polynomial} of $ \mathcal{A} $. 
\end{definition}

The organization of this article is as follows: 
In Section \ref{Quasi-polynomials with GCD-property}, we introduce and study an algebraic generalization of a quasi-polynomial with GCD-property (Definition \ref{quasi-polynomial with GCD-property}), which is a function on nonzero proper ideals of $ \mathcal{O} $ represented by using ``periodically" finitely many polynomials, called constituents. 
A ``period" of this generalization is a nonzero ideal and every constituent corresponds with a factor of a period.

In Section \ref{Characteristic quasi-polynomial}, 
we will show that the characteristic quasi-polynomial $ \chi_{\mathcal{A}}^{\mathrm{quasi}} $ is a quasi-polynomial in the sense of the previous section with the LCM-period $ \rho_{\mathcal{A}} $ (Theorem \ref{main thm}), which generalize Theorem \ref{KTT081}. 
Moreover, we will give a generalization of Theorem \ref{KTT082} which states that $ \langle 1 \rangle $-constituent of $ \chi^{\mathrm{quasi}}_{\A} (\mathfrak{a}) $ coincides with the characteristic polynomial $ \chi_{\mathcal{A}(K)}(t) $ of the hyperplane arrangement $ \mathcal{A}(K) $ (Theorem \ref{main thm1.5}), where $ \langle 1 \rangle = \mathcal{O} $ stands for the unit ideal.

%The quasi-polynomial with GCD-property $\chi^{\mathrm{quasi}}_{\A}$ 
%will be called the \textbf{characteristic quasi-polynomial} of $\A$ over $\mathcal{O}$. 
%Note that the LCM-period $\rho_0$ is a nonzero ideal of $\mathcal{O}$. 
%We will also prove that the characteristic polynomial $\chi_{\A (K)} (t)$ of the hyperplane arrangement $\A (K)$ in the vector space $K^{\ell}$ coincides with the first constituent of $\chi^{\mathrm{quasi}}_{\A}$, where $K$ is the fractional field of $\mathcal{O}$  (Theorem \ref{main thm1.5}). 
%Moreover we will study the characteristic quasi-polynomial of $\A$ over $\mathcal{O}_S$, where $\mathcal{O}_S$ is the localization of $\mathcal{O}$ with respect to any multiplicative subset $S$ of $\mathcal{O}$. 
%We will denote it by $\chi_{\A , S}^{\mathrm{quasi}}$. 
%We will also give a relation between $\chi_{\A}^{\mathrm{quasi}}$ and $\chi_{\A , S}^{\mathrm{quasi}}$. 

In Section \ref{torsion arrangement}, 
we will study the torsion arrangement $\A (K / \mathcal{O})$ and the poset $L (\A (K/\mathcal{O}))$ defined by the information of the intersections of $\A (K / \mathcal{O})$, called the poset of layers.
We will introduce the $ \kappa $-torsion subposet $L (\A (K/\mathcal{O})) [\kappa]$ and its characteristic polynomial $\chi_{\A (K/\mathcal{O})}^{\kappa} (t)$ for each factor $\kappa$ of the LCM-period $\rho_{\mathcal{A}}$ and prove that $\chi_{\A (K/\mathcal{O})}^{\kappa} (t)$ coincides with the $\kappa$-constituent of $\chi^{\mathrm{quasi}}_{\A}$ 
(Theorem \ref{main thm2}). 
This result is a generalization of %the result in \textcolor{red}{cite}. 
Theorem \ref{TY19} including Theorem \ref{LTY17}.

In Section \ref{proof of minimality}, with the similar technique as the proof of Theorem \ref{HTY21}, 
we will prove that the LCM-period $\rho_{\mathcal{A}}$ is the minimum period of $\chi^{\mathrm{quasi}}_{\A}$ (Theorem \ref{main thm3}). 

In Section \ref{localizations of the base rings}, we will study behavior of the characteristic quasi-polynomials and the posets of layers under taking localization of the base rings.

In Section \ref{computing}, we will discuss general properties of the characteristic quasi-polynomials, which are generalizations of results in \cite{kamiya2011periodicity-aoc} and very useful for computing. 

Kamiya, Takemura, and Terao \cite{kamiya2011periodicity-aoc} studied the characteristic quasi-polynomial of the irreducible crystallographic root systems. 
Thanks to the results in this article, we can consider the characteristic quasi-polynomials of non-crystallographic root systems. 
In Section \ref{Examples}, 
we will give the characteristic quasi-polynomials explicitly for non-crystallographic root systems of types $\mathrm{H}_2$, $\mathrm{H}_3$, and $\mathrm{H}_4$.

\section{Quasi-polynomials on the set of nonzero ideals} \label{Quasi-polynomials with GCD-property}

Let $ \mathcal{O} $ be a residually finite Dedekind domain. 
Let $ I(\mathcal{O}) $ denote the set consisting of nonzero ideals of $ \mathcal{O} $ and $ N(\mathfrak{a}) $ the \textbf{absolute norm} of $ \mathfrak{a} \in I(\mathcal{O}) $, that is, the cardinality of the quotient ring $ \mathcal{O}/\mathfrak{a} $.  

Note that every ideal in $I(\mathcal{O}) $ factors into a product of prime ideals uniquely. 
We write $ \mathfrak{a} \mid \mathfrak{b} $ if $ \mathfrak{a} $ is a factor of $ \mathfrak{b} $, or equivalently $ \mathfrak{a} \supseteq \mathfrak{b} $. 
The sum $ \mathfrak{a} + \mathfrak{b} $ coincides with the greatest common divisor $ \gcd(\mathfrak{a}, \mathfrak{b}) $ and the intersection $ \mathfrak{a} \cap \mathfrak{b} $ coincides the least common multiple $ \lcm(\mathfrak{a}, \mathfrak{b}) $. 

\begin{definition} \label{quasi-polynomial with GCD-property}
A function $\phi \colon I(\mathcal{O})  \rightarrow \mathbb{Z}$ 
is called a \textbf{quasi-polynomial on $ I(\mathcal{O}) $}  if there exists an ideal $ \rho \in I(\mathcal{O}) $ 
and polynomials $f^{\kappa} (t) \in \mathbb{Z} [t] $ for each $\kappa \mid \rho$ such that 
for any $\mathfrak{a} \in I(\mathcal{O}) $ with $\mathfrak{a} + \rho = \kappa$,  
\begin{align*}
\phi \left( \mathfrak{a} \right) = f^{\kappa} \left( N (\mathfrak{a}) \right) .
\end{align*}
The ideal $\rho$ is called a \textbf{period} and the polynomial $ f^{\kappa} (t) $ is called the \textbf{$\kappa$-constituent} of the quasi-polynomial $\phi$ associated with $ \rho $. 
A period which divides every period is called the \textbf{minimum period} of the quasi-polynomial $\phi$. 
\end{definition}

\begin{remark}
A quasi-polynomial on $ I(\mathbb{Z}) $ is a quasi-polynomial \emph{with the GCD-property}. 
Our generalization of quasi-polynomial \emph{does not} contain quasi-polynomials without the GCD-property. 
\end{remark}

For any nonzero prime ideal $\mathfrak{p} \in I (\mathcal{O})$, we define $\ord_{\mathfrak{p}} \colon I(\mathcal{O}) \to \mathbb{Z}_{\geq 0}$ by 
$\ord_{\mathfrak{p}} (\mathfrak{a}) \coloneqq \max \Set{a \in \mathbb{Z}_{\geq 0} | \ \mathfrak{p}^a \mid \mathfrak{a}}$. 
%$\mathfrak{p}^{\ord_{\mathfrak{p}} (\mathfrak{a})} \mid \mathfrak{a}$ and 
%$\mathfrak{p}^{\ord_{\mathfrak{p}} (\mathfrak{a})+1} \nmid \mathfrak{a}$. 
%\begin{align*}
%\mathfrak{a} = \prod_{\mathfrak{p}} \mathfrak{p}^{\ord_{\mathfrak{p}}  (\mathfrak{a})}. 
%\end{align*} 

\begin{proposition}\label{minimum period}
Let $\phi $ be a quasi-polynomial on $ I(\mathcal{O}) $. 
The following statements hold: 
\begin{enumerate}[(i)]
\item\label{minimum period 1} 
For any periods $\rho_1$ and $\rho_2$ of $\phi$, 
the sum $\rho_1 + \rho_2$ is a period of $\phi$. 
\item\label{minimum period 2} 
The minimum period of $\phi$ always exists. 
\end{enumerate}
\end{proposition}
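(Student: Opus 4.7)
For (i), set $\rho := \rho_1 + \rho_2 = \gcd(\rho_1, \rho_2)$ in $I(\mathcal{O})$; note $\rho \mid \rho_1$ and $\rho \mid \rho_2$, so both constituents $f_1^{\kappa}$ and $f_2^{\kappa}$ are defined for every $\kappa \mid \rho$. The plan is to construct a constituent $g^{\kappa} \in \mathbb{Z}[t]$ of $\phi$ associated with $\rho$ for each such $\kappa$, by first producing a ``generic'' subfamily of $S_{\kappa} := \{\mathfrak{a} \in I(\mathcal{O}) : \mathfrak{a} + \rho = \kappa\}$ on which both $\mathfrak{a} + \rho_1$ and $\mathfrak{a} + \rho_2$ take fixed saturated values $\kappa_1^{\max}$ and $\kappa_2^{\max}$, and then promoting the resulting polynomial identity to all of $S_{\kappa}$. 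Part (ii) will then follow from (i) together with the finiteness of the divisor set of any fixed ideal in $I(\mathcal{O})$.

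Working prime-by-prime with $\ord_{\mathfrak{p}}$, the membership $\mathfrak{a} \in S_{\kappa}$ forces $\ord_{\mathfrak{p}}(\mathfrak{a}) = \ord_{\mathfrak{p}}(\kappa)$ at every prime with $\ord_{\mathfrak{p}}(\kappa) < \ord_{\mathfrak{p}}(\rho)$, while leaving $\ord_{\mathfrak{p}}(\mathfrak{a}) \geq \ord_{\mathfrak{p}}(\rho)$ free at the remaining primes (including primes not dividing $\rho$). Pushing the free $\ord_{\mathfrak{p}}(\mathfrak{a})$ beyond $\max\{\ord_{\mathfrak{p}}(\rho_1), \ord_{\mathfrak{p}}(\rho_2)\}$ saturates both $\mathfrak{a} + \rho_1$ and $\mathfrak{a} + \rho_2$ to the maximum possible values $\kappa_1^{\max}$ and $\kappa_2^{\max}$. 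The saturated family contains ideals of infinitely many distinct norms (as long as at least one such free direction exists), and the identity $f_1^{\kappa_1^{\max}}(N(\mathfrak{a})) = \phi(\mathfrak{a}) = f_2^{\kappa_2^{\max}}(N(\mathfrak{a}))$ at infinitely many inputs forces equality as polynomials; I take $g^{\kappa}$ to be this common polynomial. To extend the identity to an arbitrary $\mathfrak{a} \in S_{\kappa}$ with $\kappa_1 := \mathfrak{a} + \rho_1$ possibly smaller than $\kappa_1^{\max}$, I would construct auxiliary $\mathfrak{a}'$ replicating the prime-data of $\mathfrak{a}$ at the primes that force $\kappa_1$, and saturating the $\rho_2$-free directions so that $\mathfrak{a}' + \rho_2 = \kappa_2^{\max}$; the same polynomial-identity argument then yields $f_1^{\kappa_1} = g^{\kappa}$, and evaluating at the original $\mathfrak{a}$ gives $\phi(\mathfrak{a}) = g^{\kappa}(N(\mathfrak{a}))$. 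The main obstacle is ensuring the variability arguments produce infinitely many distinct norms; this can fail only when $\mathcal{O}$ is a discrete valuation ring, in which case $\rho$ already coincides with one of $\rho_1, \rho_2$ and (i) is trivial.

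For (ii), fix any period $\rho_0$. Since $\rho_0$ has a unique factorization into finitely many prime ideals with finite exponents, its divisor set in $I(\mathcal{O})$ is finite, so the collection $P_0$ of periods dividing $\rho_0$ is a finite nonempty subfamily (containing $\rho_0$). Iterating (i), the sum $\rho^{\ast} := \sum_{\rho \in P_0} \rho$ is itself a period and lies in $P_0$, so it divides every element of $P_0$. For an arbitrary period $\rho$, part (i) gives $\rho + \rho_0 \in P_0$, whence $\rho^{\ast} \mid \rho + \rho_0 \mid \rho$. Therefore $\rho^{\ast}$ is the minimum period.
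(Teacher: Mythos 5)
Your argument for part (i) follows the same broad outline as the paper's (exploit the prime-by-prime structure of $\mathfrak{a}+\rho=\kappa$, push a free $\ord_{\mathfrak{p}}(\mathfrak{a})$ to infinity to saturate both reductions, and then identify integer polynomials agreeing at infinitely many norms), but your treatment of the degenerate case has a genuine error. The obstruction you identify --- no free direction, i.e.\ $\ord_{\mathfrak{p}}(\kappa)<\ord_{\mathfrak{p}}(\rho)$ for \emph{every} prime $\mathfrak{p}$ --- occurs precisely when every prime of $\mathcal{O}$ divides $\rho$, i.e.\ when $\mathcal{O}$ is semi-local (finitely many primes), not only when it is a DVR. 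With two or more primes, $\rho=\gcd(\rho_1,\rho_2)$ need \emph{not} equal either $\rho_i$ (take $\rho_1=\mathfrak{p}^3\mathfrak{q}$, $\rho_2=\mathfrak{p}\mathfrak{q}^3$, $\rho=\mathfrak{p}\mathfrak{q}$), so your fallback fails. The correct resolution --- which the paper uses --- is that in this case $\mathfrak{a}+\rho=\kappa$ forces $\ord_{\mathfrak{p}}(\mathfrak{a})=\ord_{\mathfrak{p}}(\kappa)$ at every prime, so $\mathfrak{a}=\kappa$ is the unique element of $S_\kappa$ and one may simply take $f^\kappa:=f_1^\kappa$. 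A second, smaller gap: after constraining $\mathfrak{a}'$ to satisfy $\mathfrak{a}'+\rho_1=\kappa_1$ and $\mathfrak{a}'+\rho_2=\kappa_2^{\max}$, some free primes of $\kappa$ may become pinned for $\mathfrak{a}'$ (this happens at a free $\mathfrak{p}$ with $\ord_{\mathfrak{p}}(\rho_1)>\ord_{\mathfrak{p}}(\rho_2)$ and $\ord_{\mathfrak{p}}(\kappa_1)<\ord_{\mathfrak{p}}(\rho_1)$); you need to verify a truly free direction survives, which requires either infinitely many primes or swapping the roles of $\rho_1$ and $\rho_2$.

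Your proof of part (ii) is correct and actually a bit cleaner than the paper's. By restricting attention to the finite set $P_0$ of periods dividing a fixed period $\rho_0$, forming $\rho^{\ast}:=\sum_{\rho\in P_0}\rho$ by iterating (i), and then using $\rho^{\ast}\mid\rho+\rho_0\mid\rho$ for an arbitrary period $\rho$, you sidestep the need to realize the gcd of \emph{all} periods as a finite sum; the paper instead picks one period per prime factor of $\rho_0$ and asserts their sum equals $\rho_0$, which as written requires a small extra step to rule out stray primes appearing in that sum.
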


\begin{proof}
(\ref{minimum period 1}) 
For $ i \in \{1,2\} $, let $f_i^{\kappa}(t) \in \mathbb{Z}[t]$ denote the $\kappa$-constituent of $\phi$ associated with the period $\rho_i$ for $ \kappa \mid \rho_{i} $.  
Let $\rho \coloneqq \rho_1 + \rho_2 $ and fix a divisor $ \kappa $ of $ \rho $. 
It is sufficient to show that there exists a polynomial $ f^{\kappa}(t) \in \mathbb{Z}[t] $ such that $ \mathfrak{a} + \rho = \kappa $ implies $ \phi(\mathfrak{a}) = f^{\kappa}(N(\mathfrak{a})) $. 

First suppose that $ \ord_{\mathfrak{p}}(\kappa) < \ord_{\mathfrak{p}}(\rho) $ for any nonzero prime ideal $ \mathfrak{p} $ of $ \mathcal{O} $. 
Note that this happens only when the number of prime ideals of $ \mathcal{O} $ is finite. 
If $ \mathfrak{a} + \rho = \kappa $, then $ \ord_{\mathfrak{p}}(\mathfrak{a}) = \ord_{\mathfrak{p}}(\kappa) $ for all nonzero prime ideals $ \mathfrak{p} $ in $ \mathcal{O} $ and hence $ \mathfrak{a} = \kappa $. 
Since $ \mathfrak{a}+\rho_{1} = \kappa + \rho_{1} = \kappa + \rho = \kappa $, we have $ \phi(\mathfrak{a}) = f_{1}^{\kappa}(N(\mathfrak{a})) $. 
Thus the polynomial $ f^{\kappa}(t) \coloneqq f_{1}^{\kappa}(t) $ has the desired property. 

Now, we consider the case there exists a nonzero prime ideal $ \mathfrak{p} $ of $ \mathcal{O} $ such that $\ord_{\mathfrak{p}} (\kappa) = \ord_{\mathfrak{p}} (\rho)$. 
Without loss of generality, we can assume that $\ord_{\mathfrak{p}} (\rho_1) \leq \ord_{\mathfrak{p}} (\rho_2)$. 
Suppose that $ \kappa^{\prime} $ is an ideal such that $ \kappa \mid \kappa^{\prime} $ and $ \kappa^{\prime} \mid \rho_{1} $. 
Note that $ \ord_{\mathfrak{p}}(\kappa) = \ord_{\mathfrak{p}}(\kappa^{\prime}) = \ord_{\mathfrak{p}}(\rho_{1}) $. 

Put $\nu \coloneqq \ord_{\mathfrak{p}} (\rho_2) - \ord_{\mathfrak{p}} (\rho_1) \geq 0$. 
Then for any integer $ m \geq \nu $, we have $ \mathfrak{p}^m \kappa^{\prime} + \rho_1 = \kappa^{\prime} $ and $ \mathfrak{p}^m \kappa + \rho_2 = \mathfrak{p}^{\nu}\kappa $. 
Therefore $
f_1^{\kappa^{\prime}} (N (\mathfrak{p}^m \kappa)) = \phi(\mathfrak{p}^{m} \kappa) = f_2^{\mathfrak{p}^\nu \kappa} (N(\mathfrak{p}^{m} \kappa))
$ for any $m \geq \nu$. 
Thus $ f_{1}^{\kappa^{\prime}}(t) = f_{2}^{\mathfrak{p}^{\nu}\kappa}(t) $. 
In particular, $ f_{1}^{\kappa^{\prime}}(t) = f_{1}^{\kappa}(t) $ for any ideal $ \kappa^{\prime} $ such that $ \kappa \mid \kappa^{\prime} $ and $ \kappa^{\prime} \mid \rho_{1} $.  

Suppose that $ \mathfrak{a} + \rho = \kappa $. 
Then $ \kappa^{\prime} \coloneqq \mathfrak{a} + \rho_{1} $ satisfies $ \kappa \mid \kappa^{\prime} $ and $ \kappa^{\prime} \mid \rho_{1} $. 
Therefore $ \phi(\mathfrak{a}) = f_{1}^{\kappa^{\prime}}(N(\mathfrak{a})) = f_{1}^{\kappa}(N(\mathfrak{a})) $. 
Hence $ f^{\kappa}(t) \coloneqq f_{1}^{\kappa}(t) $ satisfies the desired condition. 

(\ref{minimum period 2}) 
Let $\rho_0$ be the greatest common divisor of all periods of $\phi$. 
It suffices to show that $ \rho_{0} $ is a period of $ \phi $. 
Let $ \mathfrak{p}_1, \dots, \mathfrak{p}_s $ be the prime factors of $ \rho_{0} $. 
Then there exists a period $ \rho_{i} $ of $ \phi $ such that $\ord_{\mathfrak{p}_i} (\rho_i) = \ord_{\mathfrak{p}_i} (\rho_0)$ for each $ i \in \{1, \dots, s\} $. 
Since 
\begin{align*}
\rho_0 
= \prod_{i=1}^s \mathfrak{p}_i^{\ord_{\mathfrak{p}_i} (\rho_0)}
= \rho_1 + \cdots + \rho_s. 
\end{align*}
we can conclude that $\rho_0$ is a period of $\phi$ by (\ref{minimum period 1}). 
\end{proof}

\begin{proposition}\label{sum of quasi-polynomials}
Suppose that $ \phi_{1} $ and $ \phi_{2} $ are quasi-polynomials on $ I(\mathcal{O}) $ with period $ \rho_{1} $ and $ \rho_{2} $, respectively. 
Then the function $ (\phi_{1} + \phi_{2})(\mathfrak{a}) \coloneqq \phi_{1}(\mathfrak{a}) + \phi_{2}(\mathfrak{a}) $ is a quasi-polynomial of $ I(\mathcal{O}) $ with period $ \lcm(\rho_{1}, \rho_{2}) $. 
\end{proposition}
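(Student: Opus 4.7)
The plan is to write $\rho \coloneqq \lcm(\rho_{1}, \rho_{2}) = \rho_{1} \cap \rho_{2}$ and, for each divisor $\kappa$ of $\rho$, to construct the $\kappa$-constituent of $\phi_{1}+\phi_{2}$ as a sum of appropriate constituents of $\phi_{1}$ and $\phi_{2}$. The pivotal observation is that when $\mathfrak{a} + \rho = \kappa$, the ideals $\mathfrak{a} + \rho_{i}$ are forced to take a value that depends only on $\kappa$ (and $\rho_{i}$), not on $\mathfrak{a}$ itself. Concretely, since $\rho_{i} \mid \rho$ gives $\rho \subseteq \rho_{i}$, hence $\rho + \rho_{i} = \rho_{i}$, one computes
\begin{align*}
\mathfrak{a} + \rho_{i} = \mathfrak{a} + \rho + \rho_{i} = \kappa + \rho_{i}.
\end{align*}
In particular, setting $\kappa_{i} \coloneqq \kappa + \rho_{i}$, we have $\kappa_{i} \mid \rho_{i}$, so the $\kappa_{i}$-constituent $f_{i}^{\kappa_{i}}(t)$ of $\phi_{i}$ (with respect to the period $\rho_{i}$) is well defined.

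With this in hand, I would define
\begin{align*}
f^{\kappa}(t) \coloneqq f_{1}^{\kappa_{1}}(t) + f_{2}^{\kappa_{2}}(t) \in \mathbb{Z}[t],
\end{align*}
and verify directly that for any $\mathfrak{a} \in I(\mathcal{O})$ with $\mathfrak{a} + \rho = \kappa$,
\begin{align*}
(\phi_{1}+\phi_{2})(\mathfrak{a}) = f_{1}^{\kappa_{1}}(N(\mathfrak{a})) + f_{2}^{\kappa_{2}}(N(\mathfrak{a})) = f^{\kappa}(N(\mathfrak{a})),
\end{align*}
since by the identity above $\mathfrak{a} + \rho_{i} = \kappa_{i}$, so each $\phi_{i}(\mathfrak{a})$ is computed by the correct constituent. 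This shows $\phi_{1}+\phi_{2}$ is a quasi-polynomial on $I(\mathcal{O})$ with period $\rho = \lcm(\rho_{1},\rho_{2})$.

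There is essentially no obstacle; the only subtle point is confirming that the assignment $\kappa \mapsto \kappa_{i}$ is intrinsic to $\kappa$, which is ensured by the lattice identity $\rho + \rho_{i} = \rho_{i}$ coming from $\rho_{i} \mid \rho$. (One could also verify this primewise: in the decompositions $\ord_{\mathfrak{p}}(\mathfrak{a}+\rho_{i}) = \min(\ord_{\mathfrak{p}}(\mathfrak{a}), \ord_{\mathfrak{p}}(\rho_{i}))$, the condition $\mathfrak{a}+\rho=\kappa$ forces $\ord_{\mathfrak{p}}(\mathfrak{a}) \geq \ord_{\mathfrak{p}}(\rho_{i})$ at primes where $\ord_{\mathfrak{p}}(\rho) = \ord_{\mathfrak{p}}(\kappa)$ and fixes $\ord_{\mathfrak{p}}(\mathfrak{a}) = \ord_{\mathfrak{p}}(\kappa)$ otherwise, so $\ord_{\mathfrak{p}}(\mathfrak{a}+\rho_{i}) = \min(\ord_{\mathfrak{p}}(\kappa), \ord_{\mathfrak{p}}(\rho_{i}))$ in either case.) Once this is in place the proof is just an algebraic identification of constituents and requires no use of the Dedekind or residual-finiteness hypotheses beyond the factorization lattice structure already invoked throughout this section.
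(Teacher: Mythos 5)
Your proof is correct and follows essentially the same route as the paper: define $f^{\kappa}(t) = f_{1}^{\kappa+\rho_{1}}(t) + f_{2}^{\kappa+\rho_{2}}(t)$ and check that $\mathfrak{a}+\rho = \kappa$ forces $\mathfrak{a}+\rho_{i} = \kappa+\rho_{i}$. The only difference is that you spell out the justification of that last identity (via $\rho+\rho_{i}=\rho_{i}$ and a primewise sanity check), which the paper states without elaboration.
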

\begin{proof}
For $ i \in \{1,2\} $, let $ f_{i}^{\kappa}(t) \in \mathbb{Z}[t] $ denote the $ \kappa $-constituent of $ \phi_{i} $ associated with the period $ \rho_{i} $. 
For any divisor $ \kappa $ of $ \lcm(\rho_{1}, \rho_{2}) $, define $ f^{\kappa}(t) \in \mathbb{Z}[t] $ by $ f^{\kappa}(t) \coloneqq f_{1}^{\kappa + \rho_{1}}(t) + f_{2}^{\kappa + \rho_{2}}(t) $. 
Assume that $ \mathfrak{a} + \lcm(\rho_{1}, \rho_{2}) = \kappa $. 
Then $ \mathfrak{a} + \rho_{i} = \kappa + \rho_{i} $ for each $ i \in \{1,2\} $. 
Therefore $ f^{\kappa}(N(\mathfrak{a})) = f_{1}^{\kappa + \rho_{1}}(N(\mathfrak{a})) + f_{2}^{\kappa + \rho_{2}}(N(\mathfrak{a})) = \phi_{1}(\mathfrak{a}) + \phi_{2}(\mathfrak{a}) = (\phi_{1}+\phi_{2})(\mathfrak{a}) $. 
\end{proof}

\section{The characteristic quasi-polynomial of $\A$} \label{Characteristic quasi-polynomial}

Let $\A = \{ c_1, \dots, c_n \} \subseteq \mathcal{O}^{\ell}$ and $ \mathfrak{a} \in I(\mathcal{O}) $. 
Recall that the \textbf{$\mathfrak{a}$-reduced arrangement} $\A (\mathcal{O} / \mathfrak{a}) \coloneqq \Set{H_{j , \mathfrak{a}} | j \in [n]} $ consists of hyperplanes 
\begin{align*}
H_{j, \mathfrak{a}} \coloneqq \Set{ [ \boldsymbol{x} ]_{\mathfrak{a}} \in \left( \mathcal{O} / \mathfrak{a} \right)^\ell | \boldsymbol{x} c_j \equiv 0 \pmod{\mathfrak{a}} }. 
\end{align*}
Let $ M( \A (\mathcal{O} / \mathfrak{a}) ) $ denote the complement of $ \A (\mathcal{O} / \mathfrak{a}) $. 
Namely
\begin{align*}
M( \A (\mathcal{O} / \mathfrak{a}) ) \coloneqq \left(\mathcal{O}/\mathfrak{a}\right)^{\ell} \setminus 
\bigcup_{j=1}^{n}H_{j, \mathfrak{a}}. 
\end{align*}

The main purpose of this section is to prove that the characteristic quasi-polynomial $ \chi_{\mathcal{A}}^{\mathrm{quasi}}(\mathfrak{a}) = \left|M( \A (\mathcal{O} / \mathfrak{a}) )\right| $ is a quasi-polynomial discussed in Section \ref{Quasi-polynomials with GCD-property}. 
The proof will proceed in the similar way with the proof of Theorem \ref{KTT081}. 
However, since in general $ \mathcal{O} $ is not a principal ideal domain, we cannot use the Smith normal forms and we will use the structure theorem of finite generated modules over a Dedekind domain instead of them.

For a nonempty subset $ J = \{ j_1, \dots, j_k \} \subseteq [n] $, we define 
\begin{align*}
H_{J, \mathfrak{a}} \coloneqq \bigcap_{j \in J} H_{j, \mathfrak{a}} = H_{j_1, \mathfrak{a}} \cap \cdots \cap H_{j_k, \mathfrak{a}}. 
\end{align*}
%When $ J $ is empty, we define $H_{\varnothing , \mathfrak{a}} \coloneqq  \left(\mathcal{O}/\mathfrak{a}\right)^{\ell}$. 
By the inclusion-exclusion principle, 
\begin{align*}
\chi_{\mathcal{A}}^{\mathrm{quasi}}(\mathfrak{a}) 
&= \left| M(\A (\mathcal{O} / \mathfrak{a})) \right| 
= \left| \left(\mathcal{O}/\mathfrak{a}\right)^{\ell} \right| - \left| \bigcup_{j=1}^{n}H_{j, \mathfrak{a}} \right|
= N (\mathfrak{a})^{\ell} - \sum_{k = 1}^{n} (-1)^{k - 1} \sum_{\substack{J \subseteq [n] \\ |J| = k}} 
\left| \bigcap_{j \in J} H_{j, \mathfrak{a}} \right| \\
&= N(\mathfrak{a})^\ell + \sum_{\varnothing \ne J \subseteq [n]} (-1)^{|J|} \left| H_{J, \mathfrak{a}} \right| . 
%= \sum_{[\boldsymbol{x}] \in \left(\mathcal{O}/\mathfrak{a}\right)^{\ell}} 
%\prod_{j=1}^n \left( 1 - H_{j,  \mathfrak{a}} ([\boldsymbol{x}]) \right) 
%= N(\mathfrak{a})^\ell + \sum_{\varnothing \ne J \subseteq [n]} (-1)^{|J|} \left| H_{J, \mathfrak{a}} \right| . 
\end{align*}

%For each subset $Y \subset \left(\mathcal{O}/\mathfrak{a}\right)^{\ell}$, 
%let $Y(\cdot)$ be the indicator function of $ Y $. 
%Then for each $[ \boldsymbol{x} ] \in \left(\mathcal{O}/\mathfrak{a}\right)^{\ell} $, 
%\begin{align*}
%\prod_{j=1}^n \left( 1 - H_{j,  \mathfrak{a}} ( [ \boldsymbol{x} ] ) \right) 
%= \sum_{J \subseteq [n]} (-1)^{|J|} H_{J, \mathfrak{a}} ( [\boldsymbol{x}] ) 
%= \left(\mathcal{O}/\mathfrak{a}\right)^{\ell} ( [ \boldsymbol{x}]) 
%+ \sum_{\varnothing \ne J \subseteq [n]} (-1)^{|J|} H_{J, \mathfrak{a}} ([\boldsymbol{x}]). 
%\end{align*}
%Since $ [\boldsymbol{x}] \in \left| M(\A (\mathcal{O} / \mathfrak{a})) \right| $ 
%if and only if $ \prod_{j=1}^n \left( 1 - H_{j,  \mathfrak{a}} ([\boldsymbol{x}]) \right) = 1 $, we have 
%\begin{align*}
%\left| M(\A (\mathcal{O} / \mathfrak{a})) \right| 
%= \sum_{[\boldsymbol{x}] \in \left(\mathcal{O}/\mathfrak{a}\right)^{\ell}} 
%\prod_{j=1}^n \left( 1 - H_{j,  \mathfrak{a}} ([\boldsymbol{x}]) \right) 
%= N(\mathfrak{a})^\ell + \sum_{\varnothing \ne J \subseteq [n]} (-1)^{|J|} \left| H_{J, \mathfrak{a}} \right| . 
%\end{align*}

In order to prove that $ \chi_{\mathcal{A}}^{\mathrm{quasi}}(\mathfrak{a}) $ is a quasi-polynomial, it is sufficient to show that the cardinality $ \left| H_{J, \mathfrak{a}} \right| $ is a quasi-polynomial for each $ J $ by Proposition \ref{sum of quasi-polynomials}. 

Let $C_J \coloneqq (c_{j_1}, \dots c_{j_k}) \in \mathrm{Mat}_{\ell \times k} (\mathcal{O})$. 
We define an $\mathcal{O}$-homomorphism $\phi_J \colon \mathcal{O}^\ell \rightarrow \mathcal{O}^{|J|} $ by $\phi_J(\boldsymbol{x}) \coloneqq \boldsymbol{x} C_J$. 
It induces an $\mathcal{O}$-homomorphism
$\phi_{J, \mathfrak{a}} \colon \left(\mathcal{O}/\mathfrak{a}\right)^{\ell} \rightarrow \left(\mathcal{O}/\mathfrak{a}\right)^{|J|}$ 
defined by $\phi_{J, \mathfrak{a}} ([\boldsymbol{x}]_{\mathfrak{a}}) \coloneqq [\phi_J (\boldsymbol{x})]_{\mathfrak{a}}$. 
Note that $ \left| H_{J, \mathfrak{a}} \right| = \left| \ker \phi_{J, \mathfrak{a}} \right|$. 

Since $\coker \phi_J = \mathcal{O}^k / \Ima \phi_J $ is a finitely generated $\mathcal{O}$-module, 
by the structure theorem for finitely generated modules over a Dedekind domain (See for example {\cite[Theorem 5.1]{auslander1974groups}}), we obtain the following key lemma. 

\begin{lemma}\label{structure of coker phi_J}
There exist nonzero ideals $ \mathfrak{d}_{J}, \mathfrak{d}_{J,1}, \mathfrak{d}_{J,2}, \dots, \mathfrak{d}_{J,r(J)} $ such that 
\begin{align*}
\coker \phi_J \simeq \bigoplus_{i = 1}^{r(J)} \mathcal{O} / \mathfrak{d}_{J, i} \oplus \mathfrak{d}_{J} \oplus \mathcal{O}^{|J| - r (J)-1} \qquad (\mathfrak{d}_{J,i} \mid \mathfrak{d}_{J, i+1}, \ \text{ for } 1 \leq i \leq r(J)-1), 
\end{align*}
where $ r(J) \coloneqq \rank(\Ima\phi_{J}) $ and if $ r(J) = |J| $, then we regard $ \mathfrak{d}_{J} \oplus \mathcal{O}^{|J| - r (J)-1} $ as $ \{0\} $. 
\end{lemma}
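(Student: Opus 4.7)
The plan is to apply the structure theorem directly to $\coker \phi_{J}$, using the short exact sequence
\begin{align*}
0 \to \Ima \phi_{J} \to \mathcal{O}^{|J|} \to \coker \phi_{J} \to 0
\end{align*}
to read off the ranks of the pieces. First I would observe that $\coker \phi_{J}$ is finitely generated as a quotient of $\mathcal{O}^{|J|}$, and that its torsion submodule $T$ is a direct summand: the quotient $\coker \phi_{J}/T$ is a finitely generated torsion-free module over a Dedekind domain, hence projective, so the canonical surjection splits. This gives a decomposition $\coker \phi_{J} \simeq T \oplus F$.

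Next I would invoke the two halves of the structure theorem separately. The torsion part $T$ decomposes via the invariant factor theorem as $T \simeq \bigoplus_{i=1}^{s} \mathcal{O}/\mathfrak{d}_{J,i}$ for some $s \geq 0$ and nonzero ideals $\mathfrak{d}_{J,1} \mid \cdots \mid \mathfrak{d}_{J,s}$, while by the Steinitz theorem the projective module $F$ of rank $m$ is isomorphic to $\mathcal{O}^{m-1} \oplus \mathfrak{d}_{J}$ for some nonzero ideal $\mathfrak{d}_{J}$ (with the convention that this piece is $0$ when $m = 0$). Additivity of rank on the short exact sequence above forces $m = |J| - r(J)$, which matches the free part in the claimed decomposition and accounts for the exceptional case $r(J) = |J|$, where $F$ vanishes.

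The main obstacle is matching the number of torsion summands to exactly $r(J)$. The invariant factor theorem only produces some $s$, and I need the bound $s \leq r(J)$; with it in hand, I would pad by setting $\mathfrak{d}_{J,i} \coloneqq \mathcal{O}$ for $s < i \leq r(J)$, which preserves the divisibility chain because $\mathcal{O}$ divides every ideal and the added summands $\mathcal{O}/\mathcal{O}$ are zero. The bound $s \leq r(J)$ will come from a Dedekind analogue of the Smith normal form: because $\Ima \phi_{J}$ is a rank-$r(J)$ submodule of the free module $\mathcal{O}^{|J|}$, an elementary-divisor-type theorem furnishes a pseudo-basis decomposition of $\mathcal{O}^{|J|}$ with respect to which $\Ima \phi_{J}$ involves only $r(J)$ of the component slots, so at most $r(J)$ nontrivial torsion factors can survive in the quotient. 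This is precisely the step where the Dedekind-domain machinery substitutes for the integer Smith normal form used by Kamiya--Takemura--Terao.
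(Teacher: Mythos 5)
Your plan is a correct unpacking of the black-box that the paper invokes; the paper's proof of this lemma is simply a citation of the structure theorem for finitely generated modules over a Dedekind domain, and your steps (torsion submodule splits off since the torsion-free quotient is projective; invariant-factor decomposition of the torsion part; Steinitz's theorem $F \simeq \mathcal{O}^{m-1} \oplus \mathfrak{d}_{J}$ for the projective part; rank additivity on the exact sequence gives $m = |J| - r(J)$) are the standard ingredients of that theorem. One efficiency remark: the pseudo-basis/elementary-divisor theorem you invoke at the very end to bound $s$ already produces the entire decomposition of $\coker\phi_{J}$ in one stroke (decompose $\mathcal{O}^{|J|} = M_{1} \oplus \cdots \oplus M_{|J|}$ so that $\Ima\phi_{J} = \mathfrak{d}_{J,1}M_{1} \oplus \cdots \oplus \mathfrak{d}_{J,r(J)}M_{r(J)}$, take the quotient, and use $M_{i}/\mathfrak{d}_{J,i}M_{i} \simeq \mathcal{O}/\mathfrak{d}_{J,i}$), so the torsion-split and Steinitz steps are logically redundant once you have it; the paper's one-line citation is to exactly this.

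There is one genuine error, in the padding direction. With the paper's convention $\mathfrak{a} \mid \mathfrak{b} \iff \mathfrak{a} \supseteq \mathfrak{b}$, the unit ideal $\mathcal{O}$ divides every ideal, but the only ideal dividing $\mathcal{O}$ is $\mathcal{O}$ itself. You set $\mathfrak{d}_{J,i} \coloneqq \mathcal{O}$ for $s < i \leq r(J)$, which places the trivial factors at the tail of the chain; but then the link $\mathfrak{d}_{J,s} \mid \mathfrak{d}_{J,s+1} = \mathcal{O}$ would force $\mathfrak{d}_{J,s} = \mathcal{O}$, contradicting the assumption that the first $s$ factors are the nontrivial ones. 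The trivial factors must go at the \emph{front}: re-index so that $\mathfrak{d}_{J,i} \coloneqq \mathcal{O}$ for $1 \leq i \leq r(J) - s$ and place the $s$ proper invariant factors in positions $r(J) - s + 1, \dots, r(J)$. Then each $\mathcal{O} \mid \mathcal{O}$ and $\mathcal{O} \mid \mathfrak{d}_{J,r(J)-s+1}$ holds, and the original chain among the proper ideals is preserved. This matches the integer Smith normal form convention, where the diagonal entries equal to $1$ occupy the leading positions.
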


\begin{lemma}[See also {\cite[Lemma 2.1]{kamiya2008periodicity-joac}}] \label{card of ker}
\begin{align*}
\left| H_{J, \mathfrak{a}} \right| = \left| \ker \phi_{J, \mathfrak{a}} \right| 
= \left( \prod_{i = 1}^{r(J)} N (\mathfrak{a} + \mathfrak{d}_{J, i}) \right) N( \mathfrak{a} )^{\ell - r(J)}. 
\end{align*}
Moreover, $ \left| H_{J, \mathfrak{a}} \right| $ is a quasi-monomial with GCD-property of degree  $\ell -r(J) $ and its minimum period is $\mathfrak{d}_{J, r(J)}$. 
\end{lemma}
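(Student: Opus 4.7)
The plan is to reduce the computation to the cokernel and apply Lemma \ref{structure of coker phi_J}. First I would invoke the four-term exact sequence
\begin{align*}
0 \to \ker \phi_{J, \mathfrak{a}} \to (\mathcal{O}/\mathfrak{a})^{\ell} \xrightarrow{\phi_{J, \mathfrak{a}}} (\mathcal{O}/\mathfrak{a})^{|J|} \to \coker \phi_{J, \mathfrak{a}} \to 0,
\end{align*}
which, by multiplicativity of cardinalities in finite abelian groups, gives $|\ker \phi_{J, \mathfrak{a}}| = N(\mathfrak{a})^{\ell - |J|} \cdot |\coker \phi_{J, \mathfrak{a}}|$. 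Since $\phi_{J, \mathfrak{a}} = \phi_J \otimes_{\mathcal{O}} \id_{\mathcal{O}/\mathfrak{a}}$, right-exactness of $- \otimes_{\mathcal{O}} \mathcal{O}/\mathfrak{a}$ identifies $\coker \phi_{J, \mathfrak{a}} \cong \coker \phi_J \otimes_{\mathcal{O}} \mathcal{O}/\mathfrak{a}$. Tensoring the decomposition of Lemma \ref{structure of coker phi_J} term by term and counting yields the formula, using $|(\mathcal{O}/\mathfrak{d}_{J, i}) \otimes_{\mathcal{O}} \mathcal{O}/\mathfrak{a}| = N(\mathfrak{a} + \mathfrak{d}_{J, i})$, $|\mathcal{O}/\mathfrak{a}| = N(\mathfrak{a})$, and $|\mathfrak{d}_J / \mathfrak{a}\mathfrak{d}_J| = N(\mathfrak{a})$. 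The last identity is the only nontrivial input and follows from the fact that a nonzero ideal in a Dedekind domain is an invertible (hence locally free rank one) $\mathcal{O}$-module, so after localizing at each prime $\mathfrak{p} \mid \mathfrak{a}$ it becomes isomorphic to $\mathcal{O}_{\mathfrak{p}}/\mathfrak{a}\mathcal{O}_{\mathfrak{p}}$. The case $r(J) = |J|$ is covered by simply dropping the empty direct summand.

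To establish the quasi-monomial structure, I would exploit that $\mathfrak{d}_{J, i} \mid \mathfrak{d}_{J, r(J)}$ for $i \leq r(J)$. Writing $\kappa := \mathfrak{a} + \mathfrak{d}_{J, r(J)}$, this divisibility forces $\mathfrak{a} + \mathfrak{d}_{J, i} = \kappa + \mathfrak{d}_{J, i}$, so that
\begin{align*}
|H_{J, \mathfrak{a}}| = \left(\prod_{i=1}^{r(J)} N(\kappa + \mathfrak{d}_{J, i})\right) N(\mathfrak{a})^{\ell - r(J)} = f^{\kappa}(N(\mathfrak{a})),
\end{align*}
where $f^{\kappa}(t) \in \mathbb{Z}[t]$ is a monomial of degree $\ell - r(J)$ whose leading coefficient depends only on $\kappa$. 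Thus $|H_{J, \mathfrak{a}}|$ is a quasi-monomial on $I(\mathcal{O})$ with period $\mathfrak{d}_{J, r(J)}$ (the GCD-property being built into Definition \ref{quasi-polynomial with GCD-property}).

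The most delicate part, and the main obstacle, is showing the minimality of $\mathfrak{d}_{J, r(J)}$. By Proposition \ref{minimum period} the minimum period $\rho_0$ exists and divides $\mathfrak{d}_{J, r(J)}$; I would argue by contradiction. Suppose some prime $\mathfrak{p}$ has $r := \ord_{\mathfrak{p}}(\rho_0) < \ord_{\mathfrak{p}}(\mathfrak{d}_{J, r(J)}) =: e$. Specialising to $\mathfrak{a} = \mathfrak{p}^{a}$ with $a \geq r$, one verifies $\mathfrak{p}^{a} + \rho_0$ is a single ideal $\kappa$ independent of $a$, while the formula reduces to $|H_{J, \mathfrak{p}^{a}}| = N(\mathfrak{p})^{\sum_{i} \min(a, e_{i}) + a(\ell - r(J))}$ with $e_{i} := \ord_{\mathfrak{p}}(\mathfrak{d}_{J, i})$. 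Letting $a \to \infty$ (i.e.\ $a \geq e$) forces $f^{\kappa}(t) = \bigl(\prod_{i} N(\mathfrak{p})^{e_{i}}\bigr) t^{\ell - r(J)}$, but then evaluating this polynomial at $t = N(\mathfrak{p})^{r}$ and comparing with the actual value $|H_{J, \mathfrak{p}^{r}}|$ yields the numerical identity $\sum_{e_{i} > r} e_{i} = r \cdot |\{i : e_{i} > r\}|$, which is impossible since $e_{r(J)} > r$ contributes at least $r+1 > r$ on the left. This contradiction gives $\rho_0 = \mathfrak{d}_{J, r(J)}$.
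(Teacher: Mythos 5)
Your derivation of the counting formula and the quasi-monomial structure follows exactly the paper's route: exact sequence, right exactness of $-\otimes_{\mathcal{O}}\mathcal{O}/\mathfrak{a}$, the decomposition from Lemma \ref{structure of coker phi_J}, and the identity $\mathfrak{d}_J/\mathfrak{a}\mathfrak{d}_J \simeq \mathcal{O}/\mathfrak{a}$. That part is correct and essentially identical.

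Your minimality argument, however, takes a genuinely different route, and it is correct. The paper's proof is global: it compares the constituent $f^{\mathfrak{d}_{J,r(J)}}(t)$ of the period $\mathfrak{d}_{J,r(J)}$ with the constituent $g^{\rho_0}(t)$ of the minimum period $\rho_0$ on multiples of $\mathfrak{d}_{J,r(J)}$ to conclude $f^{\mathfrak{d}_{J,r(J)}}=g^{\rho_0}$, then evaluates both at $N(\rho_0)$ to derive
\begin{align*}
\prod_{i=1}^{r(J)}N(\mathfrak{d}_{J,i}) \;=\; \prod_{i=1}^{r(J)}N(\rho_0+\mathfrak{d}_{J,i}),
\end{align*}
and finishes using the term-by-term inequality $N(\mathfrak{d}_{J,i})\geq N(\rho_0+\mathfrak{d}_{J,i})$ to force $N(\mathfrak{d}_{J,r(J)})=N(\rho_0)$. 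Your argument instead localizes at a single prime $\mathfrak{p}$ witnessing $\ord_{\mathfrak{p}}(\rho_0)<\ord_{\mathfrak{p}}(\mathfrak{d}_{J,r(J)})$, restricts to ideals $\mathfrak{p}^a$ (which all land in the same constituent for $a\geq r$), pins down that constituent by taking $a$ large, and then compares exponents at $a=r$ to get the contradiction $\sum_{e_i>r}e_i = r\cdot|\{i:e_i>r\}|$. Both approaches work; the paper's is slightly shorter because it never invokes $\ord_{\mathfrak{p}}$, while yours is more local and makes the mechanism (a prime at which $\rho_0$ is too small) explicit. A small notational point: in the minimality step you reuse the symbol $f^{\kappa}$ for the constituent of $|H_{J,\cdot}|$ associated with the period $\rho_0$, even though earlier in your proof $f^{\kappa}$ denoted the constituent associated with $\mathfrak{d}_{J,r(J)}$; the argument is unaffected, but a distinct symbol (as the paper does with $g^{\kappa}$) would avoid ambiguity.
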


\begin{proof}
Since 
$ \left(\mathcal{O}/\mathfrak{a}\right)^{\ell} / \ker \phi_{J, \mathfrak{a}} \simeq \Ima \phi_{J, \mathfrak{a}}$ and 
$ \coker \phi_{J, \mathfrak{a}} = \left(\mathcal{O}/\mathfrak{a}\right)^{k} / \Ima \phi_{J, \mathfrak{a}}$, 
we have 
$ \left| \Ima \phi_{J, \mathfrak{a}} \right| = N(\mathfrak{a})^\ell / \left| \ker \phi_{J, \mathfrak{a}} \right|$ and 
$ \left| \coker \phi_{J, \mathfrak{a}} \right| = N(\mathfrak{a})^k / \left| \Ima \phi_{J, \mathfrak{a}} \right|$. 
Hence we obtain 
\begin{align*}
\left| \ker \phi_{J, \mathfrak{a}} \right| 
= \dfrac{N(\mathfrak{a})^\ell}{\left| \Ima \phi_{J, \mathfrak{a}} \right|} 
= \dfrac{N(\mathfrak{a})^\ell \left| \coker \phi_{J, \mathfrak{a}} \right|}{N(\mathfrak{a})^k} 
= N(\mathfrak{a})^{\ell - k} \left| \coker \phi_{J, \mathfrak{a}} \right|. 
\end{align*}

Consider the exact sequence 
\begin{align*}
\mathcal{O}^\ell \overset{\phi_J}{\longrightarrow} \mathcal{O}^k \longrightarrow \coker \phi_J \longrightarrow 0. 
\end{align*}
Since tensoring with $ \mathcal{O}/\mathfrak{a} $ is a right exact functor, we obtain the exact sequence 
\begin{align*}
\mathcal{O}^\ell \otimes_{\mathcal{O}} \mathcal{O} / \mathfrak{a} 
\overset{\phi_J \otimes \id}{\longrightarrow} 
\mathcal{O}^k \otimes_{\mathcal{O}} \mathcal{O} / \mathfrak{a} 
\longrightarrow 
\left( \coker \phi_J \right) \otimes_{\mathcal{O}} \mathcal{O} / \mathfrak{a} 
\longrightarrow 0. 
\end{align*}

Also, we have the following commutative diagram of $ \mathcal{O} $-modules, where the vertical arrows show isomorphisms: 

\begin{center}
%\begin{tikzcd}
%  \mathcal{O}^{\ell} \otimes_{\mathcal{O}} \mathcal{O} / \mathfrak{a}   \arrow[r, "\phi_{J} \otimes \id"] \arrow[d, "\sim"' sloped] & \mathcal{O}^{k} \otimes_{\mathcal{O}} \mathcal{O} / \mathfrak{a} \arrow[d, "\sim" sloped] \\
%  \left( \mathcal{O} / \mathfrak{a} \right)^{\ell} \arrow[r, "\phi_{J, \mathfrak{a}}"] & \left( \mathcal{O} / \mathfrak{a} \right)^{k}
%\end{tikzcd}
\begin{tikzcd}
  \mathcal{O}^{\ell} \otimes_{\mathcal{O}} \mathcal{O} / \mathfrak{a}   \arrow[r, "\phi_{J} \otimes \id"] \arrow[d] & \mathcal{O}^{k} \otimes_{\mathcal{O}} \mathcal{O} / \mathfrak{a} \arrow[d] \\
  \left( \mathcal{O} / \mathfrak{a} \right)^{\ell} \arrow[r, "\phi_{J, \mathfrak{a}}"] & \left( \mathcal{O} / \mathfrak{a} \right)^{k}
\end{tikzcd}
\end{center}

Then we obtain 
\begin{align*}
\coker \phi_{J, \mathfrak{a}} 
&\simeq 
\left( \coker \phi_J \right) \otimes_{\mathcal{O}} \mathcal{O} / \mathfrak{a} \\
&\simeq
\bigoplus_{i = 1}^{r(J)} \left( \mathcal{O} / \mathfrak{d}_{J, i} \otimes_{\mathcal{O}} \mathcal{O} / \mathfrak{a} \right) 
\oplus \left( \mathfrak{d}_{J} \otimes_{\mathcal{O}} \mathcal{O} / \mathfrak{a} \right) 
\oplus \left( \mathcal{O}^{k - r(J) -1} \otimes_{\mathcal{O}} \mathcal{O} / \mathfrak{a} \right) 
\\
&\simeq 
\bigoplus_{i = 1}^{r(J)} \mathcal{O} / \left( \mathfrak{a} + \mathfrak{d}_{J, i} \right)  
\oplus \mathfrak{d}_{J}/ \mathfrak{a} \mathfrak{d}_{J} 
\oplus \left( \mathcal{O} / \mathfrak{a} \right)^{k - r(J) - 1}. 
\end{align*}
Since $\mathcal{O}$ is a Dedekind domain, $\mathfrak{d}_{J}/ \mathfrak{a} \mathfrak{d}_{J} \simeq \mathcal{O} / \mathfrak{a}$. 
Hence 
\begin{align*}
\coker \phi_{J, \mathfrak{a}} 
\simeq 
\bigoplus_{i = 1}^{r(J)} \mathcal{O} / \left( \mathfrak{a} + \mathfrak{d}_{J, i} \right)  
\oplus \left( \mathcal{O} / \mathfrak{a} \right)^{k - r(J)}. 
\end{align*}
Therefore we obtain 
\begin{align*}
\left| H_{J, \mathfrak{a}} \right| &= \left| \ker \phi_{J, \mathfrak{a}} \right| 
= N(\mathfrak{a})^{\ell - k} \left| \coker \phi_{J, \mathfrak{a}} \right| 
= N(\mathfrak{a})^{\ell - k} \cdot \left( \prod_{i = 1}^{r(J)} N \left( \mathfrak{a} + \mathfrak{d}_{J, i} \right) \right) \cdot N(\mathfrak{a})^{k - r(J)} \\
&= \left( \prod_{i = 1}^{r(J)} N \left( \mathfrak{a} + \mathfrak{d}_{J, i} \right) \right) N(\mathfrak{a})^{\ell - r(J)}. 
\end{align*}

Next, we will show that $ \left| H_{J, \mathfrak{a}} \right| $ is a quasi-monomial with period $ \mathfrak{d}_{J, r(J)} $. 
For each $ \kappa \mid \mathfrak{d}_{J, r(J)} $, define the monomial $ f^{\kappa}(t) \in \mathbb{Z}[t] $ by 
\begin{align*}
f^{\kappa}(t) \coloneqq \left( \prod_{i = 1}^{r(J)} N \left( \kappa + \mathfrak{d}_{J, i} \right) \right)t^{\ell-r(J)}. 
\end{align*}
Suppose that $ \mathfrak{a} + \mathfrak{d}_{J,r(J)} = \kappa $. 
Then $ \mathfrak{a} + \mathfrak{d}_{J,i} = \kappa + \mathfrak{d}_{J,i} $ for each $ i \in \{1, \dots, r(J)\} $ since $ \mathfrak{d}_{J,i} \mid \mathfrak{d}_{J,r(J)} $. 
Hence $ f^{\kappa}(N(\mathfrak{a})) = \left| H_{J, \mathfrak{a}} \right| $. 

Finally, we will show that $\mathfrak{d}_{J, r(J)}$ is the minimum period of $ \left| H_{J, \mathfrak{a}} \right|$. 
Let $\rho_{0}$ be the minimum period. 
For any $ \kappa \mid \rho_{0} $ there exists $ g^{\kappa}(t) \in \mathbb{Z}[t] $ such that $ \mathfrak{a} + \rho_{0} = \kappa $ implies $ g^{\kappa}(N(\mathfrak{a})) = |H_{J, \mathfrak{a}}| $. 
Suppose that $ \mathfrak{a} $ is a multiple of $ \mathfrak{d}_{J,r(J)} $. 
Then $ \mathfrak{a} + \mathfrak{d}_{J,r(J)} = \mathfrak{d}_{J, r(J)} $ and $ \mathfrak{a} + \rho_{0} = \rho_{0} $.
Therefore $ f^{\mathfrak{d}_{J,r(J)}}(N(\mathfrak{a})) = |H_{J,\mathfrak{a}}| = g^{\rho_{0}}(N(\mathfrak{a})) $. 
Since there are infinitely many such ideals $ \mathfrak{a} $, we have $ f^{\mathfrak{d}_{J,r(J)}}(t) = g^{\rho_{0}}(t) $. 
Hence 
\begin{align*}
f^{\mathfrak{d}_{J,r(J)}}(N(\rho_{0})) = g^{\rho_{0}}(N(\rho_{0})) = |H_{J,\rho_{0}}| = f^{\rho_{0}}(N(\rho_{0})). 
\end{align*}
Therefore 
\begin{align*}
\prod_{i=1}^{r(J)}N(\mathfrak{d}_{J,i}) = \prod_{i=1}^{r(J)}N(\mathfrak{d}_{J,r(J)} + \mathfrak{d}_{J,i}) = \prod_{i=1}^{r(J)}N(\rho_{0} + \mathfrak{d}_{J,i}). 
\end{align*}
Since $ N(\mathfrak{d}_{J,i}) \geq N(\mathfrak{d}_{J,i} + \rho_{0})  $ for each $ i \in \{ 1, \dots, r(J) \} $, we have $ N(\mathfrak{d}_{J,i}) = N(\mathfrak{d}_{J,i} + \rho_{0}) $ for every $ i \in \{ 1, \dots, r(J) \} $. 
In particular $ N(\mathfrak{d}_{J,r(J)}) = N(\mathfrak{d}_{J,r(J)} + \rho_{0}) = N(\rho_{0}) $. 
Since $ \rho_{0} \mid \mathfrak{d}_{J,r(J)} $, we have $ \rho_{0} = \mathfrak{d}_{J,r(J)} $. 
\end{proof}

From the discussion so far, we have 
\begin{align*}
\chi_{\mathcal{A}}^{\mathrm{quasi}}(\mathfrak{a}) 
&= N(\mathfrak{a})^\ell + \sum_{\varnothing \ne J \subseteq [n]} (-1)^{|J|} \left| H_{J, \mathfrak{a}} \right| \\
&= N(\mathfrak{a})^\ell + \sum_{\varnothing \ne J \subseteq [n]} (-1)^{|J|} \left( \prod_{i = 1}^{r(J)} N \left( \mathfrak{a} + \mathfrak{d}_{J, i} \right) \right) N(\mathfrak{a})^{\ell - r(J)}. 
\end{align*}
Since $\mathfrak{d}_{J, r(J)}$ is a period of $\left| H_{J, \mathfrak{a}} \right|$, 
\begin{align*}
\rho_{\mathcal{A}} \coloneqq \lcm \Set{\mathfrak{d}_{J, r(J)} | \varnothing \ne J \subseteq [n]}
\end{align*}
is a period of $ \chi_{\mathcal{A}}^{\mathrm{quasi}}(\mathfrak{a})  $ by Proposition \ref{sum of quasi-polynomials}. 
We call $ \rho_{\mathcal{A}} $ the \textbf{LCM-period}. 
In summary, we obtain the following results:

\begin{theorem}[Generalization of Theorem \ref{KTT081}] \label{main thm}
The characteristic quasi-polynomial $ \chi_{\mathcal{A}}^{\mathrm{quasi}}(\mathfrak{a})  $ is a monic quasi-polynomial on $ I(\mathcal{O}) $ of degree $ \ell $ with period $ \rho_{\mathcal{A}} $ and for each $ \kappa \mid \rho_{\mathcal{A}} $ the $ \kappa $-constituent is given by 
\begin{align*}
f_{\A}^{\kappa} (t) = \sum_{J \subseteq [n]} (-1)^{|J|} \, m(J, \kappa) t^{\ell - r(J)}, 
\end{align*}
where we put $m(J, \mathfrak{a}) \coloneqq |\mathrm{tors}(\coker(\phi_{J, \mathfrak{a}}))| = \prod_{i = 1}^{r(J)} N \left( \mathfrak{a} + \mathfrak{d}_{J, i} \right)$ for nonempty $J \subseteq [n]$ and $ \mathfrak{a} \in I(\mathcal{O}) $. 
When $J = \varnothing$, we define $r(\varnothing) \coloneqq 0$ and $m(\varnothing, \kappa) \coloneqq 1$. 
\end{theorem}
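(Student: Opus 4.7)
The plan is to obtain Theorem \ref{main thm} as an almost immediate consequence of the inclusion-exclusion expansion of $\chi_{\mathcal{A}}^{\mathrm{quasi}}(\mathfrak{a})$ displayed just above the statement, combined with Lemma \ref{card of ker} and Proposition \ref{sum of quasi-polynomials}. That expansion writes $\chi_{\mathcal{A}}^{\mathrm{quasi}}(\mathfrak{a})$ as the sum of the leading monomial $N(\mathfrak{a})^{\ell}$ (the $J = \varnothing$ summand) and signed contributions $(-1)^{|J|}|H_{J, \mathfrak{a}}|$ indexed by nonempty $J \subseteq [n]$. Lemma \ref{card of ker} tells us that each $|H_{J, \mathfrak{a}}|$ is a quasi-monomial on $I(\mathcal{O})$ of degree $\ell - r(J)$ with minimum period $\mathfrak{d}_{J, r(J)}$, while the leading term is an honest monomial of degree $\ell$ with trivial period. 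I would then apply Proposition \ref{sum of quasi-polynomials} inductively across $J \subseteq [n]$ to conclude that $\chi_{\mathcal{A}}^{\mathrm{quasi}}$ is a quasi-polynomial on $I(\mathcal{O})$ of degree $\ell$, monic (since only the $J = \varnothing$ term contributes degree $\ell$), with period $\lcm\{\mathfrak{d}_{J, r(J)} : \varnothing \neq J \subseteq [n]\} = \rho_{\mathcal{A}}$.

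The second half of the proof is the explicit identification of the $\kappa$-constituent $f_{\mathcal{A}}^{\kappa}(t)$ for $\kappa \mid \rho_{\mathcal{A}}$. The key ring-theoretic fact is that for every $J$ and every $i \in \{1, \dots, r(J)\}$, one has $\mathfrak{d}_{J, i} \mid \mathfrak{d}_{J, r(J)} \mid \rho_{\mathcal{A}}$, and hence $\mathfrak{a} + \mathfrak{d}_{J, i}$ divides $\rho_{\mathcal{A}}$. From this it follows that whenever $\mathfrak{a} + \rho_{\mathcal{A}} = \kappa$,
\[
\mathfrak{a} + \mathfrak{d}_{J, i} = \mathfrak{a} + \mathfrak{d}_{J, i} + \rho_{\mathcal{A}} = \kappa + \mathfrak{d}_{J, i}.
\]
Plugging this into the formula of Lemma \ref{card of ker} yields $|H_{J, \mathfrak{a}}| = m(J, \kappa)\, N(\mathfrak{a})^{\ell - r(J)}$ on the residue class $\mathfrak{a} + \rho_{\mathcal{A}} = \kappa$, and substituting termwise into the inclusion-exclusion formula (with the convention $r(\varnothing) = 0$, $m(\varnothing, \kappa) = 1$ absorbing the leading monomial) produces the asserted closed form for $f_{\mathcal{A}}^{\kappa}(t)$.

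The main obstacle, though minor, is the commensurability check just described: we must be sure that passing from the per-$J$ minimum period $\mathfrak{d}_{J, r(J)}$ up to the common coarser period $\rho_{\mathcal{A}}$ does not distort the shape of each constituent. The divisibility argument above settles this in one line, after which the entire proof reduces to routine bookkeeping with inclusion-exclusion and a separate remark that the coefficient $1$ of $N(\mathfrak{a})^{\ell}$ gives monicness.
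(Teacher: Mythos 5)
Your proposal is correct and follows essentially the same route as the paper: the inclusion-exclusion expansion, Lemma \ref{card of ker} for the quasi-monomiality of each $|H_{J,\mathfrak{a}}|$ with period $\mathfrak{d}_{J,r(J)}$, Proposition \ref{sum of quasi-polynomials} for the lcm period, and the divisibility chain $\mathfrak{d}_{J,i}\mid\mathfrak{d}_{J,r(J)}\mid\rho_{\mathcal{A}}$ to pass from the per-$J$ minimum period to $\rho_{\mathcal{A}}$ when writing down the $\kappa$-constituent. The only difference is cosmetic: you make the commensurability check fully explicit, whereas the paper folds it into the summary paragraph preceding the theorem.
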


%\begin{definition}
%We call the function $ \chi^{\mathrm{quasi}}_{\A}(\mathfrak{a}) \coloneqq \left| M \left( \A (\mathcal{O} / \mathfrak{a}) \right) \right| $ the \textbf{characteristic quasi-polynomial} of $\A$. 
%\end{definition}

\begin{remark}
The LCM-period $\rho_{\mathcal{A}}$ is actually the minimum period of $\chi^{\mathrm{quasi}}_{\A}$ (See Theorem \ref{main thm3}).
\end{remark}

\begin{theorem}[Generalization of Theorem \ref{KTT082}] \label{main thm1.5}
The $\langle 1 \rangle$-constituent of the characteristic quasi-polynomial of $\A$ coincides with the characteristic polynomial of the hyperplane arrangement $\A (K)$, that is, $f_{\A}^{\langle 1 \rangle} (t) = \chi_{\A(K)} (t)$. 
\end{theorem}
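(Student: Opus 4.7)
The approach is to compute the $\langle 1 \rangle$-constituent directly from the explicit formula of Theorem \ref{main thm} and then identify the result with Whitney's expansion of the characteristic polynomial of the central hyperplane arrangement $\mathcal{A}(K)$.

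First I specialize Theorem \ref{main thm} to $\kappa = \langle 1 \rangle = \mathcal{O}$. For every nonempty $J \subseteq [n]$ and every $i \in \{1, \dots, r(J)\}$, the ideal $\mathfrak{d}_{J,i}$ satisfies $\langle 1 \rangle + \mathfrak{d}_{J,i} = \langle 1 \rangle$, so $N(\langle 1 \rangle + \mathfrak{d}_{J,i}) = |\mathcal{O}/\mathcal{O}| = 1$, and hence $m(J, \langle 1 \rangle) = 1$. Together with the convention $m(\varnothing, \langle 1 \rangle) = 1$, the formula collapses to
\begin{align*}
f_{\mathcal{A}}^{\langle 1 \rangle}(t) = \sum_{J \subseteq [n]} (-1)^{|J|} t^{\ell - r(J)}.
\end{align*}

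Second, since every hyperplane $H_j \subseteq K^\ell$ is defined by a homogeneous linear equation, the arrangement $\mathcal{A}(K)$ is central. Whitney's theorem (a standard consequence of M\"obius inversion and inclusion-exclusion on the intersection lattice, valid for central arrangements over any field) then yields
\begin{align*}
\chi_{\mathcal{A}(K)}(t) = \sum_{J \subseteq [n]} (-1)^{|J|} t^{\dim_{K} H_{J}} = \sum_{J \subseteq [n]} (-1)^{|J|} t^{\ell - r_{K}(J)},
\end{align*}
where $r_{K}(J) \coloneqq \rank_{K}(C_{J})$.

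It then remains to verify that $r(J) = r_{K}(J)$. Since $\mathcal{O}$ is an integral domain and $\Ima \phi_{J} \subseteq \mathcal{O}^{|J|}$ is torsion-free, its $\mathcal{O}$-rank equals $\dim_{K}(\Ima \phi_{J} \otimes_{\mathcal{O}} K)$. Tensoring $\phi_{J}$ with $K$ produces the $K$-linear map $K^{\ell} \to K^{|J|}$, $\boldsymbol{x} \mapsto \boldsymbol{x} C_{J}$, whose image has $K$-dimension $r_{K}(J)$. Hence $r(J) = r_{K}(J)$, and the two polynomial expressions agree. I do not expect any significant obstacle: the combinatorial identity is classical and the rank comparison is routine commutative algebra; the substantive content of the proof is already packaged in Theorem \ref{main thm}.
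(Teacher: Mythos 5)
Your proof is correct and follows essentially the same route as the paper: specialize Theorem \ref{main thm} at $\kappa = \langle 1 \rangle$ and compare the result with Whitney's expansion of $\chi_{\mathcal{A}(K)}(t)$. The paper states this more tersely, leaving implicit both the observation that $m(J,\langle 1\rangle)=1$ and the identification $r(J) = \rank_K(C_J)$; you spell these out, which is a harmless (indeed helpful) addition but not a different argument.
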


\begin{proof}
By Theorem \ref{main thm}, 
\begin{align*}
f_{\A}^{\langle 1 \rangle} (t) 
= \sum_{J \subseteq [n]} (-1)^{|J|}  t^{\ell - r(J)}. 
\end{align*}
Using Whitney's theorem 
(See for example \cite[Theorem 2.4]{stanley2004introduction-lnicmi}), we obtain 
$\chi_{\A (K)} (t) = f_{\A}^{\langle 1 \rangle} (t) $. 
\end{proof}

\begin{example} \label{example2}%Gaussian integer
Let $ \mathcal{O} = \mathbb{Z}[\sqrt{-1}] $ and $ \mathcal{A} = \Set{\begin{pmatrix}
1 \\ 1
\end{pmatrix}, \begin{pmatrix}
1 \\ -1
\end{pmatrix}, \begin{pmatrix}
1 \\ \sqrt{-1}
\end{pmatrix}, \begin{pmatrix}
1 \\ -\sqrt{-1}
\end{pmatrix}} $. 
Since $ \mathcal{O} $ is a principal ideal domain, we can determine the LCM-period by computing Smith normal forms. 
The LCM-period is $ \langle 2 \rangle = \mathfrak{p}^{2} $, where $ \mathfrak{p} = \langle 1+\sqrt{-1} \rangle $. 
The constituents of $ \chi_{\mathcal{A}}^{\mathrm{quasi}} $ are given by 
\begin{align*}
f_{\mathcal{A}}^{\langle 1 \rangle} = t^{2}-4t + 3, \quad
f_{\mathcal{A}}^{\mathfrak{p}} = t^{2}-4t + 6, \quad
f_{\mathcal{A}}^{\langle 2 \rangle} = t^{2}-4t + 10.  
\end{align*}
\end{example}

\begin{example} \label{example1}
Let $K$ be a quadratic field $\mathbb{Q} (\sqrt{-5})$. 
Then its ring of integers is $\mathcal{O} = \mathbb{Z} [\sqrt{-5}]$. 
Note that $ \mathcal{O} $ is not a principal ideal domain. 
For example, prime ideals $ \mathfrak{p} \coloneqq \langle 2, 1-\sqrt{-5}\rangle $ and $ \mathfrak{q} \coloneqq \langle 3, 1+\sqrt{-5} \rangle $ are not principal. 

Let 
\begin{align*}
\A \coloneqq \{ c_1, c_2 \}, 
\ \text{ where }
c_{1} \coloneqq \begin{pmatrix}
2  \\
1 - \sqrt{-5}
\end{pmatrix}, \
c_{2} \coloneqq \begin{pmatrix}
1 + \sqrt{-5} \\
3
\end{pmatrix}. 
\end{align*}

Then we have 
\begin{align*}
\Ima \phi_{\{1\}} = \mathfrak{p}, \quad
\Ima \phi_{\{2\}} = \mathfrak{q}, \quad
\Ima \phi_{\{1,2\}} = \left\langle (2,1+\sqrt{-5}), \ (1-\sqrt{-5}, 3) \right\rangle. 
\end{align*}

Define a map $ g \colon \mathcal{O}^{2} \to \mathcal{O} $ by $ g(a,b) \coloneqq (1+\sqrt{-5})a-2b $. 
Then it holds that $ \ker g = \Ima\phi_{\{1,2\}} $ and $ \Ima g = \mathfrak{p} $. 
We will give a proof only for $ \ker g \subseteq \Ima\phi_{\{1,2\}} $ since the rest is clear. 
Taking $ (a,b) \in \ker g $, we have $ 2b = (1+\sqrt{-5})a \in \langle 1+\sqrt{-5} \rangle = \mathfrak{p}\mathfrak{q} \subseteq \mathfrak{q} $. 
Since $ 2 \not\in \mathfrak{q} $, we have $ b \in \mathfrak{q} $. 
Therefore there exist $ c,d \in \mathcal{O} $ such that $ b = 3c + (1+\sqrt{-5})d $. 
Then $ (1+\sqrt{-5})a = 2b = 6c + 2(1+\sqrt{-5})d $ and hence $ a = (1-\sqrt{-5})c + 2d $. 
Hence $ (a,b) = ((1-\sqrt{-5}c+2d, 3c+(1+\sqrt{-5})d)) = (1-\sqrt{-5},3)c + (2,1+\sqrt{-5})d \in \Ima\phi_{\{1,2\}} $. 
Thus, $ \Ima\phi_{\{1,2\}} \supseteq \ker g $. 

We obtain 
\begin{align*}
\coker\phi_{\{1\}} = \mathcal{O}/\mathfrak{p}, \quad
\coker\phi_{\{2\}} = \mathcal{O}/\mathfrak{q}, \quad
\coker\phi_{\{1,2\}} = \Ima g = \mathfrak{p}. 
\end{align*}
Therefore $\rho_{\mathcal{A}} = \operatorname{lcm} \{ \mathfrak{p}, \mathfrak{q} \} = \mathfrak{p} \mathfrak{q} = \langle 1+\sqrt{-5} \rangle$ and the constituents of $\chi_{\A}^{\mathrm{quasi}}$ are given by 
\begin{align*}
f^{\langle 1 \rangle}_\A (t) 
= t^2 - t, 
\quad
f^{\mathfrak{p}}_\A (t) 
= t^2 - 2t, 
\quad
f^{\mathfrak{q}}_\A (t) 
= t^2 - 3t, 
\quad
f^{\mathfrak{p}\mathfrak{q}}_\A (t) 
= t^2 - 4t.
\end{align*}

Moreover, since $ (1+\sqrt{-5})c_{1} = 2c_{2} $, the hyperplanes $ H_{1} = \Set{\boldsymbol{x} \in K^{2} | \boldsymbol{x}c_{1}=0} $ and $ H_{2} = \Set{\boldsymbol{x} \in K^{2} | \boldsymbol{x}c_{2}=0} $ are equal. 
Therefore $ \mathcal{A}(K) = \{H_{1}\} $ and we obtain $ \chi_{\mathcal{A}(K)}(t) = t^{2}-t = f_{\mathcal{A}}^{\langle 1 \rangle}(t) $ directly. 
\end{example}

\section{The torsion arrangement and its poset of layers} \label{torsion arrangement}

Recall that the hyperplane arrangement $ \mathcal{A}(K) $ is defined by $ \mathcal{A}(K)= \{H_{1}, \dots, H_{n}\} $, where 
\begin{align*}
H_{j} = \Set{\boldsymbol{x} \in K^{\ell} | \boldsymbol{x}c_{j} = 0}
\end{align*}
and the torsion arrangement $ \A (K / \mathcal{O}) $ is defined by $\A (K / \mathcal{O}) = \Set{T_{1}, \dots, T_{n} }$, where 
\begin{align*}
T_j = \Set{ \pi ( \boldsymbol{x} ) \in (K/\mathcal{O})^{\ell} | \boldsymbol{x} c_j \equiv 0 \pmod{\mathcal{O}} }
\end{align*}
and $\pi \colon K^{\ell} \to (K/\mathcal{O})^{\ell}$ denotes the natural projection. 

For each $\varnothing \ne J \subseteq [n]$, let $ H_{J} \coloneqq \bigcap_{j \in J} H_{j} $ and $T_J \coloneqq \bigcap_{j \in J} T_j$. 
Moreover, let $ H_{\varnothing} \coloneqq K^{\ell} $ and $T_{\varnothing} \coloneqq (K/\mathcal{O})^{\ell} $. 
Note that for each $\varnothing \ne J \subseteq [n]$, 
\begin{align*}
\pi(H_{J}) &= \Set{\pi(\boldsymbol{x}) \in (K/\mathcal{O})^{\ell} | \boldsymbol{x} C_J = 0 }, \\
T_{J}  &= \Set{ \pi (\boldsymbol{x}) \in (K/\mathcal{O})^{\ell} | \boldsymbol{x} C_J \in \mathcal{O}^{|J|} }
\end{align*}
and $ \pi(H_{J}) $ is an $ \mathcal{O} $-submodule of $ T_{J} $. 

The intersection poset of $ \mathcal{A}(K) $ is defined by 
\begin{align*}
L(\mathcal{A}(K)) \coloneqq \Set{H_{J} \subseteq K^{\ell} | J \subseteq [n]} 
\end{align*}
equipped with the order defined by the reverse inclusion. 
It is well known that $ L(\mathcal{A}) $ is a finite geometric lattice. 

To construct the intersection poset for toric arrangements, one uses connected components of the intersections of tori. 
However, the field $ K $ may not have reasonable topology to define connected components of the intersection $ T_{J} $. 
To define the poset for the torsion arrangement $ \mathcal{A}(K/\mathcal{O}) $, we use the quotient module $ T_{J}/\pi(H_{J}) $ (See also \cite[The paragraph before Lemma 4.5]{tran2019combinatorics-joctsa}).

\begin{definition}
Define the \textbf{poset of layers} $ L(\A (K / \mathcal{O}) ) $ by 
\begin{align*}
L(\A (K / \mathcal{O}) ) \coloneqq & \Set{ Z \subseteq \left( K / \mathcal{O} \right)^{\ell} | 
J \subseteq [n], \ Z \in T_{J} / \pi (H_J) } \\
= & \Set{\pi(\boldsymbol{x}) + \pi(H_{J}) | J \subseteq [n], \boldsymbol{x} \in T_{J}}. 
\end{align*}
equipped with the order defined by the reverse inclusion. 
The poset $ L(\A (K / \mathcal{O}) ) $ has a unique minimal element $T_{\varnothing} = (K/\mathcal{O})^{\ell}$. 
We call an element of $L(\A (K / \mathcal{O}) ) $ a \textbf{layer}.
\end{definition}

\begin{remark}
Every $ T_{J} $ can be represented as the disjoint union of layers: $ T_{J} = \bigsqcup_{Z \in T_{J}/\pi(H_{J})} Z $. 
Thus a layer behaves like a ``connected component" of the intersection $ T_{J} $. 
For each $ J $, $ \pi(H_{J}) $ is a unique component of $ T_{J} $ containing the unit element $ \pi(\boldsymbol{0}) $. 
\end{remark}

\subsection{Finiteness of the poset of layers}

\begin{lemma} \label{hissu}
For each $\varnothing \ne J \subseteq [n]$, 
\begin{align*} 
T_J / \pi (H_J) \simeq 
\mathrm{tors} \left(\coker \phi_J \right) 
\simeq \bigoplus_{i = 1}^{r(J)} \mathcal{O} / \mathfrak{d}_{J, i}. 
\end{align*}
as $\mathcal{O}$-modules, 
where $\phi_J \colon \mathcal{O}^{\ell} \to \mathcal{O}^{|J|}$ is defined by $\phi_J (\boldsymbol{x}) = \boldsymbol{x} C_J$ and $ \mathfrak{d}_{J, 1}, \dots, \mathfrak{d}_{J, r(J)} $ are the ideals  determined in Lemma \ref{structure of coker phi_J}. 
\end{lemma}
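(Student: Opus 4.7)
The plan is to produce an explicit $\mathcal{O}$-linear isomorphism
\[
T_J / \pi(H_J) \xrightarrow{\ \sim\ } \mathrm{tors}(\coker \phi_J);
\]
once this first isomorphism is in place, the second one follows immediately from Lemma \ref{structure of coker phi_J}, since the summands $\mathfrak{d}_{J}$ and $\mathcal{O}^{|J|-r(J)-1}$ appearing in the decomposition of $\coker \phi_J$ are torsion-free (a nonzero ideal of an integral domain is torsion-free as a module over that domain), so the torsion submodule of $\coker \phi_J$ is exactly $\bigoplus_{i=1}^{r(J)} \mathcal{O}/\mathfrak{d}_{J,i}$.

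To construct the first isomorphism I would define
\[
\Phi \colon T_J \longrightarrow \coker \phi_J, \qquad \Phi(\pi(\boldsymbol{x})) \coloneqq [\boldsymbol{x} C_J],
\]
where $\boldsymbol{x} \in K^{\ell}$ is any lift of an element of $T_J$. By the defining condition of $T_J$ the vector $\boldsymbol{x} C_J$ lies in $\mathcal{O}^{|J|}$, so the right-hand side makes sense in $\coker \phi_J = \mathcal{O}^{|J|}/\Ima \phi_J$; any two lifts differ by an element of $\mathcal{O}^{\ell}$, whose image under $\phi_J$ belongs to $\Ima \phi_J$, so $\Phi$ is well defined and $\mathcal{O}$-linear. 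Next I would compute the kernel: $\Phi(\pi(\boldsymbol{x})) = 0$ means $\boldsymbol{x} C_J = \boldsymbol{y} C_J$ for some $\boldsymbol{y} \in \mathcal{O}^{\ell}$, i.e.\ $\boldsymbol{x} - \boldsymbol{y} \in H_J$, so $\pi(\boldsymbol{x}) = \pi(\boldsymbol{x}-\boldsymbol{y}) \in \pi(H_J)$; the reverse inclusion is immediate, hence $\ker \Phi = \pi(H_J)$.

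It then remains to identify the image of $\Phi$ with $\mathrm{tors}(\coker \phi_J)$. For one direction, given $\pi(\boldsymbol{x}) \in T_J$, clearing denominators produces a nonzero $a \in \mathcal{O}$ with $a\boldsymbol{x} \in \mathcal{O}^{\ell}$, whence $a \cdot \boldsymbol{x} C_J = \phi_J(a\boldsymbol{x}) \in \Ima \phi_J$, so $\Phi(\pi(\boldsymbol{x}))$ is torsion. Conversely, if $[\boldsymbol{y}] \in \coker \phi_J$ is torsion, then $a\boldsymbol{y} = \phi_J(\boldsymbol{x}')$ for some nonzero $a \in \mathcal{O}$ and some $\boldsymbol{x}' \in \mathcal{O}^{\ell}$; setting $\boldsymbol{x} \coloneqq a^{-1}\boldsymbol{x}' \in K^{\ell}$ yields $\boldsymbol{x} C_J = \boldsymbol{y} \in \mathcal{O}^{|J|}$, so $\pi(\boldsymbol{x}) \in T_J$ and $\Phi(\pi(\boldsymbol{x})) = [\boldsymbol{y}]$. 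The first isomorphism theorem then delivers the desired isomorphism.

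I do not foresee any serious obstacle: the argument is essentially bookkeeping around the definition of $\Phi$. The only slightly delicate point is the clearing-of-denominators step used to recognize every torsion class as a value of $\Phi$, which is valid because $K = \Frac \mathcal{O}$ lets us bring the finitely many coordinates of any $\boldsymbol{x} \in K^{\ell}$ over a common nonzero denominator in $\mathcal{O}$.
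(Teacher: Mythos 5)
Your proposal is correct and follows essentially the same route as the paper: you build the same map $\pi(\boldsymbol{x}) \mapsto [\boldsymbol{x}C_J]$, the paper defines it directly on the quotient $T_J/\pi(H_J)$ and checks well-definedness, injectivity, and surjectivity, while you define it on $T_J$, compute the kernel to be $\pi(H_J)$, identify the image with $\mathrm{tors}(\coker\phi_J)$, and invoke the first isomorphism theorem. The clearing-of-denominators step and the observation that the free and fractional-ideal summands of $\coker\phi_J$ are torsion-free are exactly what the paper uses (the latter implicitly, by citing Lemma~\ref{structure of coker phi_J}).
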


\begin{proof}
The latter isomorphism follows by Lemma \ref{structure of coker phi_J}. 
We verify the former isomorphism. 

Define an $\mathcal{O}$-homomorphism $ f_{J} \colon T_{J}/\pi(H_{J}) \to \mathrm{tors}(\coker \phi_{J}) $ by $ f_{J}(\boldsymbol{x} + \pi(H_{J})) \coloneqq \boldsymbol{x}C_{J} + \Ima \phi_{J} $. 
Note that when $ \boldsymbol{x} \in T_{J} $ we have $ \boldsymbol{x}C_{J} \in \mathcal{O}^{J} $ and, since every element in $ (K/\mathcal{O})^{\ell} $ is a torsion element, we have $ \boldsymbol{x}C_{J} + \Ima\phi_{J} \in \mathrm{tors}(\coker\phi_{J}) $. 

First, we prove that $ f_{J} $ is well-defined. 
Suppose that $\pi (\boldsymbol{x}) + \pi (H_J) = \pi ( \boldsymbol{y} ) + \pi (H_J) \in T_J / \pi (H_J) $. 
Then $\pi (\boldsymbol{x} ) - \pi (\boldsymbol{y}) \in \pi (H_J)$. 
Hence there exists $\boldsymbol{v} \in H_J$ such that $ \pi (\boldsymbol{x} ) - \pi (\boldsymbol{y}) = \pi(\boldsymbol{v})$, 
and hence there exists $\boldsymbol{a} \in \mathcal{O}^{\ell}$ such that $\boldsymbol{x} - \boldsymbol{y} - \boldsymbol{v} = \boldsymbol{a}$. 
Since $\boldsymbol{v} \in H_J$ and $\boldsymbol{a} \in \mathcal{O}^{\ell} $, 
$\boldsymbol{x} C_J - \boldsymbol{y} C_J = \boldsymbol{v} C_J + \boldsymbol{a} C_J = \boldsymbol{a} C_J \in \Ima \phi_J$. 
Thus $ \boldsymbol{x}C_{J} + \Ima\phi_{J} = \boldsymbol{y}C_{J} + \Ima\phi_{J} $ and hence $ f_{J} $ is well-defined. 

Second, we prove that $ f_{J} $ is injective. 
Assume that $ \boldsymbol{x} C_J + \Ima \phi_J = \boldsymbol{y} C_J + \Ima \phi_J  $, where $ \boldsymbol{x}, \boldsymbol{y} \in K^{\ell} $. 
Then 
$\boldsymbol{x} C_J - \boldsymbol{y} C_J \in \Ima \phi_J$ and hence there exists $\boldsymbol{a} \in \mathcal{O}^{\ell}$ such that 
$\boldsymbol{x} C_J - \boldsymbol{y} C_J = \phi_J (\boldsymbol{a}) = \boldsymbol{a} C_J$. 
Therefore $(\boldsymbol{x} - \boldsymbol{y} - \boldsymbol{a}) C_J = 0$ and $\boldsymbol{x} - \boldsymbol{y} - \boldsymbol{a} \in H_J$. 
Thus $\pi (\boldsymbol{x} ) - \pi (\boldsymbol{y}) = \pi (\boldsymbol{x} ) - \pi (\boldsymbol{y}) - \pi (\boldsymbol{a} ) = \pi(\boldsymbol{x}-\boldsymbol{y}-\boldsymbol{a}) \in \pi (H_J)$ and hence $ f_{J} $ is injective. 

Finally, we prove that $ f $ is surjective. 
For any element $\boldsymbol{y} + \Ima \phi_J \in \mathrm{tors} \left( \coker\phi_{J} \right)$, 
there exists $b \in \mathcal{O} \setminus \{ 0 \}$ such that $b \left( \boldsymbol{y} + \Ima \phi_J \right) = \Ima \phi_J$, that is, 
$b \boldsymbol{y} \in \Ima \phi_J$. 
Hence there exists $\boldsymbol{a} \in \mathcal{O}^{\ell}$ such that 
$b \boldsymbol{y} = \phi_J (\boldsymbol{a}) = \boldsymbol{a} C_J$. 
Let $\boldsymbol{x} = (1/b) \boldsymbol{a} \in K^{\ell}$. 
Then $\boldsymbol{x} C_J = (1/b) \boldsymbol{a} C_J = \boldsymbol{y} \in \mathcal{O}^{|J|}$, 
and hence $\pi (\boldsymbol{x}) \in T_J$. 
Therefore
$f_J \left( \pi (\boldsymbol{x}) + \pi (H_J) \right) = \boldsymbol{x} C_J + \Ima \phi_J = \boldsymbol{y} + \Ima \phi_J$. 
Hence $f_J$ is surjective. 

Thus $ f_{J} $ is isomorphic and we obtain the desired isomorphism $T_{J} / \pi(H_J) \simeq \mathrm{tors} \left(\coker \phi_J \right) $ as $\mathcal{O}$-modules. 
\end{proof}

\begin{theorem}
The poset of layers $ L(\mathcal{A}(K/\mathcal{O})) $ is finite. 
\end{theorem}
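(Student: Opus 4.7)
The plan is to show finiteness by decomposing $L(\mathcal{A}(K/\mathcal{O}))$ into finitely many pieces, one for each subset $J \subseteq [n]$, and then showing each piece is finite using Lemma \ref{hissu} together with the residual finiteness hypothesis on $\mathcal{O}$.

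First I would observe that, by definition, every layer has the form $\pi(\boldsymbol{x}) + \pi(H_J)$ for some $J \subseteq [n]$ and some $\boldsymbol{x} \in T_J$. Thus the assignment sending a coset in $T_J / \pi(H_J)$ to the corresponding subset of $(K/\mathcal{O})^{\ell}$ gives a surjection
\begin{align*}
\bigsqcup_{J \subseteq [n]} T_J / \pi(H_J) \twoheadrightarrow L(\mathcal{A}(K/\mathcal{O})).
\end{align*}
Since $[n]$ has only $2^n$ subsets, it therefore suffices to prove that $T_J/\pi(H_J)$ is finite for every $J \subseteq [n]$.

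Next, for each nonempty $J$, I would invoke Lemma \ref{hissu}, which yields the $\mathcal{O}$-module isomorphism
\begin{align*}
T_J/\pi(H_J) \simeq \bigoplus_{i=1}^{r(J)} \mathcal{O}/\mathfrak{d}_{J,i}.
\end{align*}
Each ideal $\mathfrak{d}_{J,i}$ is nonzero, so by the residually finite hypothesis, $\mathcal{O}/\mathfrak{d}_{J,i}$ is a finite ring. Hence the direct sum on the right is finite, and so $T_J/\pi(H_J)$ is finite. For $J = \varnothing$, we have $T_\varnothing = \pi(H_\varnothing) = (K/\mathcal{O})^{\ell}$, so $T_\varnothing/\pi(H_\varnothing)$ is the trivial one-element module, which is trivially finite.

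Combining these two observations, $L(\mathcal{A}(K/\mathcal{O}))$ is the image of a finite set under a surjection and is therefore finite. There is no real obstacle here; the work is entirely done by Lemma \ref{hissu} and the defining residual finiteness assumption on $\mathcal{O}$, so the proof is just a short bookkeeping argument.
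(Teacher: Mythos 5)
Your proof is correct and takes essentially the same approach as the paper: the paper's proof is the same appeal to Lemma \ref{hissu} together with residual finiteness, plus the same remark handling $J=\varnothing$, with your version merely making the finite-union bookkeeping explicit.
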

\begin{proof}
It follows immediately from Lemma \ref{hissu} and the assumption that $ \mathcal{O} $ is residually finite. 
Also note that $T_{\varnothing} / \pi (H_{\varnothing}) = \{ T_{\varnothing} \}$. 
\end{proof}

\subsection{Relations between $ L(\mathcal{A}(K/\mathcal{O})) $ and $ L(\mathcal{A}(K)) $}

First of all, we will show a crucial lemma for describing relations between $ L(\mathcal{A}(K/\mathcal{O})) $ and $ L(\mathcal{A}(K) $. 
We need the topology on $ K^{\ell} $ defined by a fixed nonzero prime ideal $ \mathfrak{p} $ of $ \mathcal{O} $. 
Let $\mathcal{O}_{\mathfrak{p}}$ be the local ring of $\mathcal{O}$ at ${\mathfrak{p}}$. 
Then $\mathcal{O}_{\mathfrak{p}}$ is a discrete valuation ring with the uniformizer $ p $.  
Since $K = \Frac \left( \mathcal{O}_{\mathfrak{p}} \right)$, 
any element $x \in K \setminus \{ 0 \}$ is given by the form 
$x = u p^{m}$ uniquely, where $u \in \mathcal{O}_{\mathfrak{p}}^{\times}$ and $m \in \mathbb{Z}$. 
We denote the exponent $m$ by $\ord _p (x)$. 
Putting $\ord_p (0) \coloneqq \infty$, we obtain a discrete valuation $\ord_p \colon K \to \mathbb{Z} \cup \{ \infty \}$ and then $K$ is a topological space with an open base consisting of 
$
B_{m} (a) \coloneqq 
\Set{x \in K | \ord_p (x - a) > m} $, where $a \in K$ and $m \in \mathbb{Z}$. 

We can consider that $K^{\ell}$ is a topological space defined by the product topology. 
Note that any vector subspace of $K^{\ell}$ is closed and $K^{\ell}$ has an open base consisting of 
\begin{align*}
B_{m} (\boldsymbol{a}) \coloneqq 
\Set{\boldsymbol{x} \in K^{\ell} | \ord_p (\boldsymbol{x} - \boldsymbol{a} ) > m}, 
\end{align*} 
where $ \boldsymbol{x} = (x_{1}, \dots, x_{\ell}), \, \boldsymbol{a} = (a_{1}, \dots, a_{\ell}) \in K^{\ell}, \, m \in \mathbb{Z}$, and $ \ord_p (\boldsymbol{x} - \boldsymbol{a} ) \coloneqq \min_{1 \leq i \leq \ell} \ord_p (x_i - a_i) $. 

\begin{lemma}\label{locally dvr}
Let $ U $ and $ V $ be vector subspaces of $ K^{\ell} $ and $ \boldsymbol{x}_{1}, \dots, \boldsymbol{x}_{r} \in K^{\ell} $. 
Suppose that $ \pi(U) \subseteq \bigcup_{i=1}^{r}(\pi(\boldsymbol{x}_{i}) + \pi(V) ) $. 
Then $ U \subseteq V $. 
\end{lemma}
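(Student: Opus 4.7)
The plan is a proof by contradiction exploiting the \(\mathfrak{p}\)-adic valuation \(\ord_\mathfrak{p}\) introduced just above the lemma. Suppose \(U \not\subseteq V\) and pick \(\boldsymbol{u}_0 \in U \setminus V\). Since \(U\) is a \(K\)-vector space, the entire line \(K \boldsymbol{u}_0\) lies in \(U\), and the hypothesis unwinds to \(K \boldsymbol{u}_0 \subseteq \bigcup_{i=1}^{r} (\boldsymbol{x}_i + V + \mathcal{O}^{\ell})\). I would then set \(B_i \coloneqq \{\beta \in K : \beta \boldsymbol{u}_0 \in \boldsymbol{x}_i + V + \mathcal{O}^{\ell}\}\) and \(A \coloneqq \{\beta \in K : \beta \boldsymbol{u}_0 \in V + \mathcal{O}^{\ell}\}\). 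A direct check shows that \(A\) is an \(\mathcal{O}\)-submodule of \(K\), that every nonempty \(B_i\) is contained in a coset \(\alpha_i + A\) (fixing any \(\alpha_i \in B_i\)), and that the hypothesis becomes
\[
K \;=\; \bigcup_i B_i \;\subseteq\; \bigcup_i (\alpha_i + A).
\]
The goal is now to derive a contradiction from this covering.

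Next I would show that \(A\) is a fractional ideal of \(\mathcal{O}\). Reducing modulo \(V\) inside the finite-dimensional \(K\)-vector space \(K^{\ell}/V\) turns \(A \cdot \boldsymbol{u}_0 \subseteq V + \mathcal{O}^{\ell}\) into \(A \cdot \bar{\boldsymbol{u}}_0 \subseteq (\mathcal{O}^{\ell} + V)/V\), where \(\bar{\boldsymbol{u}}_0 \coloneqq \boldsymbol{u}_0 + V \neq 0\). Multiplication by the nonzero vector \(\bar{\boldsymbol{u}}_0\) is an injective \(\mathcal{O}\)-linear map from \(K\) (and hence from \(A\)) into \(K^{\ell}/V\), so \(A\) embeds as an \(\mathcal{O}\)-submodule of the finitely generated module \((\mathcal{O}^{\ell} + V)/V\). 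Since \(\mathcal{O}\) is Noetherian, \(A\) is finitely generated. The degenerate case \(A = 0\) would force \(|K| \leq r\), contradicting the infinitude of \(\mathcal{O}\) (and thus of \(K\)) whenever \(\mathcal{O}\) is not a field; if \(\mathcal{O}\) happens to be a field the lemma is vacuous since the fixed prime \(\mathfrak{p}\) does not exist. Therefore \(A\) is a nonzero finitely generated \(\mathcal{O}\)-submodule of \(K\), i.e., a fractional ideal.

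The final step is to use the \(\mathfrak{p}\)-adic valuation. Any fractional ideal is bounded below in \(\ord_\mathfrak{p}\): there exists \(n \in \mathbb{Z}\) with \(\ord_\mathfrak{p}(a) \geq n\) for every nonzero \(a \in A\). By the ultrametric inequality, every \(x \in \alpha_i + A\) satisfies \(\ord_\mathfrak{p}(x) \geq \min(\ord_\mathfrak{p}(\alpha_i), n)\). Setting \(m \coloneqq \min_i \min(\ord_\mathfrak{p}(\alpha_i), n) \in \mathbb{Z}\), the covering would force \(\ord_\mathfrak{p}(x) \geq m\) for every \(x \in K\), which contradicts \(\ord_\mathfrak{p}(p^{-N}) = -N\) for arbitrarily large \(N \in \mathbb{Z}_{\geq 0}\), where \(p\) is a uniformizer of \(\mathfrak{p}\). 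This contradiction proves \(U \subseteq V\).

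The main obstacle is the middle step: recognizing that \(A\) must be finitely generated. Once this is established, the \(\mathfrak{p}\)-adic argument is a routine application of the ultrametric inequality and the unboundedness of \(\ord_\mathfrak{p}\) on \(K\). The key trick is to project the problem into \(K^{\ell}/V\), so that the finitely generated module \((\mathcal{O}^{\ell} + V)/V\) can be used to invoke the Noetherian property of \(\mathcal{O}\); this is exactly the place where the vector-space nature of \(U\) (used to generate the whole line \(K\boldsymbol{u}_0\)) and the Dedekind hypothesis on \(\mathcal{O}\) interact.
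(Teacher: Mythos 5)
Your proof is correct, and it takes a genuinely different route from the paper's. The paper argues directly: for an arbitrary $\boldsymbol{u}\in U$ and any $m\in\mathbb{Z}$, it rescales $\boldsymbol{u}$ to $a^{-1}p^{-m-1}\boldsymbol{u}\in U$ (with $a$ chosen so that all $a\boldsymbol{x}_i$ are integral), applies the hypothesis to this scaled point, clears the $\boldsymbol{x}_i$-term, and reads off an element of $V$ within distance $\ord_{\mathfrak{p}}>m$ of $\boldsymbol{u}$; thus $\boldsymbol{u}$ is an accumulation point of $V$, and since vector subspaces are closed in the $\mathfrak{p}$-adic topology, $\boldsymbol{u}\in V$. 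Your proof instead argues by contradiction, reducing to the one-dimensional line $K\boldsymbol{u}_0$ for a hypothetical $\boldsymbol{u}_0\in U\setminus V$, translating the covering condition into a covering of $K$ by finitely many cosets of the $\mathcal{O}$-module $A=\{\beta:\beta\boldsymbol{u}_0\in V+\mathcal{O}^{\ell}\}$, identifying $A$ as a nonzero fractional ideal (via the injection into $(\mathcal{O}^{\ell}+V)/V$ and Noetherianity), and then deriving a contradiction from the fact that a fractional ideal is bounded below in $\ord_{\mathfrak{p}}$ while $K$ is not. The paper's version is shorter and keeps the topological framing explicit; yours isolates the algebraic obstruction — that $K$ cannot be covered by finitely many translates of a fractional ideal — and makes visible exactly where Noetherianity enters, at the cost of an extra layer of module-theoretic bookkeeping. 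Both arguments exploit the same two essential inputs: $U$ is a full $K$-subspace (so it contains an unbounded line) and the discrete valuation $\ord_{\mathfrak{p}}$ is unbounded below on $K$.

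One small point worth tightening: when you write that each nonempty $B_i$ is a coset $\alpha_i+A$, what you actually prove (and all you need) is the containment $B_i\subseteq\alpha_i+A$; equality may fail, but the covering $K\subseteq\bigcup_i(\alpha_i+A)$ still follows. Also, the remark about $\mathcal{O}$ being a field is appropriate here, since in that degenerate case $K/\mathcal{O}=0$ and the lemma as stated would be false; the paper implicitly excludes it by fixing a nonzero prime $\mathfrak{p}$ before the lemma, so you are right to flag it but it does not affect the argument in the intended setting.
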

\begin{proof}
Let $ \boldsymbol{u} \in U $. 
We will show that $ \boldsymbol{u} $ is an accumulation point of $ V $, that is, for any $ m \in \mathbb{Z}, B_{m}(\boldsymbol{u}) \cap V \neq \varnothing $. 
Take an element $ a \in \mathcal{O}\setminus\{0\} $ such that $ a\boldsymbol{x}_{i} \in \mathcal{O}^{\ell} $ for each $ i \in \{1, \dots, r\} $ and consider the vector $ a^{-1}p^{-m-1}\boldsymbol{u} \in U $. 

From the assumption, we have $ \pi(a^{-1}p^{-m-1}\boldsymbol{u}) = \pi(\boldsymbol{x}_{i}) + \pi(\boldsymbol{v}) $ for some $ \boldsymbol{x}_{i} $ and $ \boldsymbol{v} \in V $. 
Then $ \pi(p^{-m-1}\boldsymbol{u}) = \pi(a\boldsymbol{x}_{i}) + \pi(a\boldsymbol{v}) = \pi(\boldsymbol{0}) + \pi(a\boldsymbol{v}) = \pi(a\boldsymbol{v}) $. 
Therefore there exists $ \boldsymbol{b} \in \mathcal{O}^{\ell} $ such that $ p^{-m-1}\boldsymbol{u} = a\boldsymbol{v} + \boldsymbol{b} $. 
Then $ \ord_{p}(ap^{m+1}\boldsymbol{v} - \boldsymbol{u}) = \ord_{p}(p^{m+1}\boldsymbol{b}) > m $.
Thus $ ap^{m+1}\boldsymbol{v} \in B_{m}(\boldsymbol{u}) \cap V $ and hence $ \boldsymbol{u} $ is an accumulation point of $ V $. 

Since the vector subspace $ V $ is closed, we have $ \boldsymbol{u} \in V $. 
Hence $ U \subseteq V $.  
\end{proof}

\begin{proposition}\label{pi preserves order}
Define a map $ \pi \colon L(\mathcal{A}(K)) \to  L(\A (K / \mathcal{O}) ) $ by $ H_{J} \mapsto \pi(H_{J}) $. 
Then $ \pi $ is an order-preserving injection. 
\end{proposition}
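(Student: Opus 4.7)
My plan is to check three things in turn: that the assignment $H_J \mapsto \pi(H_J)$ is well-defined as a map into $L(\A(K/\mathcal{O}))$, that it is order-preserving with respect to reverse inclusion on both sides, and that it is injective. The only nontrivial step will be injectivity, where I intend to invoke Lemma \ref{locally dvr}.

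Well-definedness is immediate on two counts. First, equal subsets of $K^{\ell}$ have equal images under $\pi$ in $(K/\mathcal{O})^{\ell}$, so the value does not depend on a representing $J$. Second, $\pi(H_J)$ really is a layer: since $\boldsymbol{0} \cdot C_J = 0 \in \mathcal{O}^{|J|}$, we have $\boldsymbol{0} \in T_J$, and hence $\pi(H_J) = \pi(\boldsymbol{0}) + \pi(H_J)$ is of the shape required in the definition of $L(\A(K/\mathcal{O}))$. Order-preservation is then the trivial observation that set-theoretic images preserve inclusions: if $H_J \supseteq H_{J'}$ (i.e. $H_J \leq H_{J'}$ in $L(\A(K))$), then $\pi(H_J) \supseteq \pi(H_{J'})$.

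For injectivity, suppose $\pi(H_J) = \pi(H_{J'})$. I would apply Lemma \ref{locally dvr} with $U = H_J$, $V = H_{J'}$, $r = 1$, and $\boldsymbol{x}_1 = \boldsymbol{0}$: the hypothesis $\pi(H_J) \subseteq \pi(\boldsymbol{0}) + \pi(H_{J'})$ immediately gives $H_J \subseteq H_{J'}$. Swapping the roles of $J$ and $J'$ yields the reverse inclusion, so $H_J = H_{J'}$ in $L(\A(K))$.

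The main obstacle is injectivity. A priori, $\pi$ already identifies $\mathcal{O}$-translates, so it is not obvious that two distinct vector subspaces of $K^{\ell}$ cannot have the same image. All the substance of the argument is absorbed into Lemma \ref{locally dvr}, whose proof uses the $\mathfrak{p}$-adic topology (for any chosen nonzero prime $\mathfrak{p}$ of $\mathcal{O}$) and the fact that vector subspaces are closed to show that a subspace is recovered from the collection of cosets whose projections cover $\pi(U)$. Given that lemma, the present proposition is a short formal consequence.
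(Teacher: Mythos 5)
Your proposal is correct and matches the paper's proof, which also observes that order-preservation is immediate and derives injectivity from Lemma \ref{locally dvr}. You have simply spelled out the routine details (well-definedness, the explicit instantiation of the lemma with $r=1$, $\boldsymbol{x}_1 = \boldsymbol{0}$, and the two-sided application) that the paper leaves implicit.
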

\begin{proof}
It is clear that $ \pi $ preserves the order. 
Injectivity follows from Lemma \ref{locally dvr}. 
\end{proof}

\begin{proposition}\label{layer connectedness}
Let $ Z \in T_{J_{1}}/\pi(H_{J_{1}}) $ be a layer. 
Suppose that $ Z \subseteq T_{J_{2}} $. 
Then there exists a layer $ W \in T_{J_{2}}/\pi(H_{J_{2}}) $ such that $ Z \subseteq W $. 
\end{proposition}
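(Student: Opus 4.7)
The plan is to choose a representative of $Z$ and show that it also represents a layer of $T_{J_2}$ that contains $Z$. First I would write $Z = \pi(\boldsymbol{x}) + \pi(H_{J_1})$ for some $\boldsymbol{x} \in K^{\ell}$ with $\pi(\boldsymbol{x}) \in T_{J_1}$. Since $\pi(\boldsymbol{x}) \in Z \subseteq T_{J_2}$, the element $\boldsymbol{x}$ also lies over $T_{J_2}$, so it makes sense to consider the candidate $W \coloneqq \pi(\boldsymbol{x}) + \pi(H_{J_2}) \in T_{J_2}/\pi(H_{J_2})$. The inclusion $Z \subseteq W$ then reduces to showing that $\pi(H_{J_1}) \subseteq \pi(H_{J_2})$.

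To establish this containment, I would first observe that $\pi(H_{J_1}) \subseteq T_{J_2}$: for any $\boldsymbol{v} \in H_{J_1}$ we have $\pi(\boldsymbol{v}) + \pi(\boldsymbol{x}) \in Z \subseteq T_{J_2}$, and since $\pi(\boldsymbol{x}) \in T_{J_2}$ and $T_{J_2}$ is an $\mathcal{O}$-submodule of $(K/\mathcal{O})^{\ell}$, it follows that $\pi(\boldsymbol{v}) \in T_{J_2}$. Now I would use the key finiteness input from Lemma \ref{hissu}: the quotient $T_{J_2}/\pi(H_{J_2})$ is finite, so only finitely many cosets of $\pi(H_{J_2})$ meet $\pi(H_{J_1})$. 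Hence there exist $\boldsymbol{y}_1, \dots, \boldsymbol{y}_r$ such that
\begin{align*}
\pi(H_{J_1}) \subseteq \bigcup_{i=1}^{r} \bigl(\pi(\boldsymbol{y}_i) + \pi(H_{J_2})\bigr).
\end{align*}

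At this point the main tool is Lemma \ref{locally dvr}: applied to the $K$-vector subspaces $U = H_{J_1}$ and $V = H_{J_2}$ of $K^{\ell}$, it yields $H_{J_1} \subseteq H_{J_2}$, and therefore $\pi(H_{J_1}) \subseteq \pi(H_{J_2})$. Consequently $Z = \pi(\boldsymbol{x}) + \pi(H_{J_1}) \subseteq \pi(\boldsymbol{x}) + \pi(H_{J_2}) = W$, as required. The only substantive step is the passage from the set-theoretic cover by finitely many cosets to the actual inclusion of the subspaces; this is precisely what Lemma \ref{locally dvr} provides using the $\mathfrak{p}$-adic topology, and I do not anticipate any further obstacle beyond invoking it correctly.
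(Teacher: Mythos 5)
Your proof is correct and follows essentially the same route as the paper: choose a representative $\pi(\boldsymbol{x})$ of $Z$, set $W=\pi(\boldsymbol{x})+\pi(H_{J_2})$, cover $\pi(H_{J_1})$ by finitely many cosets of $\pi(H_{J_2})$ using the finiteness of $T_{J_2}/\pi(H_{J_2})$, and invoke Lemma~\ref{locally dvr} to conclude $H_{J_1}\subseteq H_{J_2}$. The only cosmetic difference is that you establish $\pi(H_{J_1})\subseteq T_{J_2}$ directly, whereas the paper obtains the cover of $\pi(H_{J_1})$ by translating the cover of $Z\subseteq T_{J_2}$ by $-\pi(\boldsymbol{x})$; both are valid.
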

\begin{proof}
Let $ \pi(\boldsymbol{x}) \in Z $. 
Then $ Z = \pi(\boldsymbol{x}) + \pi(H_{J_{1}}) $. 
Since $ T_{J_{2}}/\pi(H_{J_{2}}) $ is finite, there exist $ \pi(\boldsymbol{x}_{1}), \dots, \pi(\boldsymbol{x}_{r}) \in T_{J_{2}} $ such that 
\begin{align*}
\pi(\boldsymbol{x}) + \pi(H_{J_{1}}) = Z \subseteq T_{J_{2}} = \bigsqcup_{i=1}^{r}(\pi(\boldsymbol{x}_{i}) + \pi(H_{J_{2}})). 
\end{align*}
Then $ \pi(H_{J_{1}}) \subseteq \bigsqcup_{i=1}^{r} (\pi(\boldsymbol{x}_{i}-\boldsymbol{x}) + \pi(H_{J_{2}})) $. 
Using Lemma \ref{locally dvr}, we have $ H_{J_{1}} \subseteq H_{J_{2}} $. 
Therefore the layer $ W \coloneqq \pi(\boldsymbol{x}) + \pi(H_{J_{2}}) $ is a desired layer. 
\end{proof}

\begin{lemma} \label{intersection}
Given a layer $ Z \in L(\mathcal{A}(K/\mathcal{O})) $, there exists a unique $ H_{Z} \in L(\mathcal{A}) $ such that $ Z = \pi(\boldsymbol{x}) + \pi(H_{Z}) $ for any $ \pi(\boldsymbol{x}) \in Z $. 
%In particular, if $Z \in T_{J_1} / \pi(H_{J_1}) \cap T_{J_2}/\pi(H_{J_{2}}) $, then $H_{J_1} = H_{J_2} = H_{Z}$.
\end{lemma}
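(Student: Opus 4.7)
The plan is to split the statement into existence and uniqueness, and reduce each to a fact already established earlier in this subsection.

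For existence, I would unpack the definition of $L(\mathcal{A}(K/\mathcal{O}))$. By definition, a layer $Z$ is an element of $T_J / \pi(H_J)$ for some $J \subseteq [n]$, i.e., $Z$ is a coset of the $\mathcal{O}$-submodule $\pi(H_J)$ inside $T_J$. Hence for every representative $\pi(\boldsymbol{x}) \in Z$ we automatically have $Z = \pi(\boldsymbol{x}) + \pi(H_J)$. Setting $H_Z \coloneqq H_J \in L(\mathcal{A}(K))$ supplies a candidate with the desired property.

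For uniqueness, suppose $H_1, H_2 \in L(\mathcal{A}(K))$ both satisfy $Z = \pi(\boldsymbol{x}) + \pi(H_1) = \pi(\boldsymbol{x}) + \pi(H_2)$ for some (hence any) fixed $\pi(\boldsymbol{x}) \in Z$. Translating both sides by $-\pi(\boldsymbol{x})$ inside the $\mathcal{O}$-module $(K/\mathcal{O})^{\ell}$ yields $\pi(H_1) = \pi(H_2)$. By Proposition \ref{pi preserves order}, the assignment $H_J \mapsto \pi(H_J)$ defines an injective map $L(\mathcal{A}(K)) \to L(\mathcal{A}(K/\mathcal{O}))$, so $H_1 = H_2$, establishing uniqueness.

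I do not expect any genuine obstacle; the argument is a short bookkeeping exercise once the injectivity of $\pi$ on $L(\mathcal{A}(K))$ is in hand. The real content has already been paid for in Lemma \ref{locally dvr}, where the $\mathfrak{p}$-adic topology was used to show that $\pi(U) \subseteq \bigcup_i (\pi(\boldsymbol{x}_i) + \pi(V))$ forces $U \subseteq V$ for $K$-subspaces $U, V$. Once that lemma (and hence Proposition \ref{pi preserves order}) is available, the proof of this statement reduces to two lines.
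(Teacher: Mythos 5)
Your proof is correct and follows essentially the same route as the paper: existence is read off from the definition of $L(\mathcal{A}(K/\mathcal{O}))$, and uniqueness reduces to the injectivity of $H_J \mapsto \pi(H_J)$, which is exactly Lemma \ref{locally dvr} (invoked in the paper directly, and by you via Proposition \ref{pi preserves order}, which is just that lemma repackaged).
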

\begin{proof}
Since $ Z \in T_{J}/\pi(H_{J}) $ for some $ J \subseteq [n] $, the existence holds. 
To show the uniqueness, suppose that $ Z = \pi(\boldsymbol{x}) + \pi(H_{J_{1}}) = \pi(\boldsymbol{x}) + \pi(H_{J_{2}}) $, where $ \boldsymbol{x} \in Z $ and $ J_{i} \subseteq [n] $ for each $ i \in \{1,2\} $. 
Then $\pi (H_{J_1}) = \pi (H_{J_2})$. 
By Lemma \ref{locally dvr}, we obtain $ H_{J_{1}} = H_{J_{2}} $. 
\end{proof}

\begin{definition}
Define a map $ \psi \colon L(\mathcal{A}(K/\mathcal{O})) \to L(\mathcal{A}(K)) $ by $ \psi(Z) \coloneqq H_{Z} $. 
Also, define the \textbf{dimension} of $ Z $ by $ \dim Z \coloneqq \dim H_{Z} $. 
\end{definition}

\begin{definition}
Let $ Z \in L(\mathcal{A}(K/\mathcal{O})) $. 
Define 
\begin{align*}
J_{Z} &\coloneqq \Set{j \in [n] | T_{j} \supseteq Z}, \\
\mathcal{A}_{Z} &\coloneqq \Set{c_{j} \in \mathcal{A} | T_{j} \supseteq Z} = \Set{c_{j} \in \mathcal{A} | j \in J_{Z}}. 
\end{align*}
We call $ \mathcal{A}_{Z} $ the \textbf{localization} of $ \mathcal{A} $ at $ Z $. 
\end{definition}

\begin{lemma} \label{interval}
The following statements hold: 
\begin{enumerate}[(i)]
\item\label{interval 1} 
The map $ \psi $ is order-preserving. 
Namely, if $ Z \subseteq W $, then $ H_{Z} \subseteq H_{W} $. 

\item\label{interval 2} 
The map $ \psi $ induces an isomorphism from the interval $ [T_{\varnothing}, Z] $ to $ L(\mathcal{A}_{Z}(K)) $, where $ [T_{\varnothing}, Z]  \coloneqq \Set{W \in L(\mathcal{A}(K/\mathcal{O})) | T_{\varnothing} \leq W \leq Z} $. 

\item\label{interval 3}  
The M\"{o}bius function on $ L(\mathcal{A}(K/\mathcal{O})) $ strictly alternates in sign. 
\end{enumerate}
\end{lemma}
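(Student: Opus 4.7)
The plan is to first establish a key structural claim, then derive each part in turn. The claim: for every layer $W \in L(\mathcal{A}(K/\mathcal{O}))$, we have $H_W = H_{J_W} = \bigcap_{j \in J_W} H_j$, so in particular $\psi(W)$ lies in $L(\mathcal{A}(K))$ and in fact in $L(\mathcal{A}_W(K))$. To prove it, write $W \in T_J / \pi(H_J)$ for some $J$, so that $H_W = H_J$ by the uniqueness in Lemma \ref{intersection}. The inclusion $J \subseteq J_W$ is immediate since $W \subseteq T_J \subseteq T_j$ for each $j \in J$, giving $H_{J_W} \subseteq H_J = H_W$. For the reverse inclusion, Proposition \ref{layer connectedness} applied with $J_2 = \{j\}$ for each $j \in J_W$ produces a layer $W'' \in T_{\{j\}}/\pi(H_j)$ containing $W$; then part (\ref{interval 1}) yields $H_W \subseteq H_{W''} = H_j$, so $H_W \subseteq H_{J_W}$.

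Part (\ref{interval 1}) itself is a direct consequence of Lemmas \ref{intersection} and \ref{locally dvr}: for any $\pi(\boldsymbol{x}) \in Z \subseteq W$, both layers admit the common-basepoint expressions $Z = \pi(\boldsymbol{x}) + \pi(H_Z)$ and $W = \pi(\boldsymbol{x}) + \pi(H_W)$, yielding $\pi(H_Z) \subseteq \pi(H_W)$ and hence $H_Z \subseteq H_W$ by Lemma \ref{locally dvr}. This is invoked freely in the key claim, with no circularity.

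For part (\ref{interval 2}), order-preservation of $\psi$ is (\ref{interval 1}). The key claim shows $\psi$ maps $[T_\varnothing, Z]$ into $L(\mathcal{A}_Z(K))$: if $W \supseteq Z$, then $J_W \subseteq J_Z$ because $Z \subseteq W \subseteq T_j$ for each $j \in J_W$, so $\psi(W) = H_{J_W} \in L(\mathcal{A}_Z(K))$. For surjectivity, given $H_{J'}$ with $J' \subseteq J_Z$, pick $\pi(\boldsymbol{z}) \in Z \subseteq T_{J_Z} \subseteq T_{J'}$ and set $W \coloneqq \pi(\boldsymbol{z}) + \pi(H_{J'})$; this $W$ lies in $T_{J'}/\pi(H_{J'})$, contains $Z$ since $H_Z = H_{J_Z} \subseteq H_{J'}$, and satisfies $\psi(W) = H_{J'}$. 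Injectivity and order-reflection reduce to the same common-basepoint trick: any $W_i \in [T_\varnothing, Z]$ can be written $W_i = \pi(\boldsymbol{z}) + \pi(H_{W_i})$ based at a fixed $\pi(\boldsymbol{z}) \in Z$, so $H_{W_1} = H_{W_2}$ forces $W_1 = W_2$, and $H_{W_1} \supseteq H_{W_2}$ forces $W_1 \supseteq W_2$.

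Part (\ref{interval 3}) then follows by transporting the classical theorem that the M\"obius function of a geometric lattice strictly alternates in sign. Each interval $[T_\varnothing, Z]$ is isomorphic via $\psi$ to $L(\mathcal{A}_Z(K))$, which is the intersection lattice of a hyperplane arrangement and hence a geometric lattice; the alternation $\operatorname{sign}\mu(Z) = (-1)^{\rank}$ transfers directly, with rank equal to $\ell - \dim Z$. The main obstacle in the whole argument is the key structural claim $H_W = H_{J_W}$: it requires pairing Proposition \ref{layer connectedness} with the uniqueness in Lemma \ref{intersection} to pin down the "correct" defining index set for a layer, and once it is in hand, every remaining step is routine coset bookkeeping via Lemma \ref{locally dvr}.
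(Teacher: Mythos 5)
Your proof is correct and follows essentially the same route as the paper: part (\ref{interval 1}) via the common-basepoint trick and Lemma \ref{locally dvr}, part (\ref{interval 2}) via that trick plus Proposition \ref{layer connectedness}, and part (\ref{interval 3}) by transporting geometric-lattice alternation along the isomorphism from (\ref{interval 2}).

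The one genuine difference is your upfront structural claim $H_W = H_{J_W}$. In the paper this fact appears only later, as Lemma \ref{D(Z)}(\ref{D(Z) 2}), and is derived there \emph{using} Lemma \ref{interval}; you instead prove it independently from Lemma \ref{intersection}, Proposition \ref{layer connectedness} (applied with $J_2=\{j\}$), and part (\ref{interval 1}), which is legitimate and non-circular. Having it available lets you verify explicitly that $\psi$ really does carry $[T_{\varnothing},Z]$ \emph{into} $L(\mathcal{A}_Z(K))$ — a point the paper's proof of (\ref{interval 2}) leaves implicit, since it only checks injectivity into $L(\mathcal{A}(K))$ and surjectivity onto $L(\mathcal{A}_Z(K))$ — and also lets you replace the paper's invocation of Proposition \ref{layer connectedness} in the surjectivity step with an explicit construction $W=\pi(\boldsymbol{z})+\pi(H_{J'})$. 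You also spell out order-reflection of $\psi^{-1}$, which the paper omits. These are small completeness gains bought at the cost of proving ahead of time what amounts to Lemma \ref{D(Z)}(\ref{D(Z) 2}).
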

\begin{proof}
(\ref{interval 1})
Let $ \pi(\boldsymbol{x}) \in Z $. 
Then $ Z = \pi(\boldsymbol{x}) + \pi(H_{Z}) $ and $ W = \pi(\boldsymbol{x}) + \pi(H_{W}) $ since $ Z \subseteq W $.
Then we have $ \pi(H_{Z}) \subseteq \pi(H_{W}) $. 
By Lemma \ref{locally dvr}, $ H_{Z} \subseteq H_{W} $. 

(\ref{interval 2})
We show the injectivity of the restriction $ \psi|_{[T_{\varnothing},Z]} $. 
Let $ W_{1}, W_{2} \in \left[ T_{\varnothing}, Z \right] $. 
Then $ Z \subseteq W_{1} \cap W_{2} $. 
Letting $ \pi(\boldsymbol{x}) \in Z $, we have $ W_{1} = \pi(\boldsymbol{x}) + \pi(H_{W_{1}}) $ and $ W_{2} = \pi(\boldsymbol{x}) + \pi(H_{W_{2}}) $.  
Suppose that $ \psi(W_{1}) = \psi(W_{2}) $. 
Then $ H_{W_{1}} = H_{W_{2}} $ and hence $ W_{1} = W_{2} $. 
Thus the restriction $ \psi|_{[T_{\varnothing},Z]} $ is injective. 

Now, we show the surjectivity. 
Let $ H_{J} \in L(\mathcal{A}_{Z}(K)) $, where $ J \subseteq J_{Z} $. 
Then $ T_{J} \supseteq Z $ and hence there exists a layer $ W \in T_{J}/\pi(H_{J}) $ such that $ W \supseteq Z $ by Proposition \ref{layer connectedness}. 
Then $ \psi(W) = H_{J} $ and hence $ \psi|_{[T_{\varnothing},Z]} $ is surjective. 

(\ref{interval 3})
Note that the restriction of the M\"{o}bius function $ \mu $ on $ L(\mathcal{A}(K/\mathcal{O})) $ to the interval $ [T_{\varnothing}, Z] $ coincides with the M\"{o}bius function on $ [T_{\varnothing}, Z] $. 
By (\ref{interval 2}), the interval $ [T_{\varnothing}, Z] $ is a geometric lattice. 
Therefore the assertion holds. 
\end{proof}

\subsection{The $ \kappa $-torsion subposet}

Let $M$ be a $\mathcal{O}$-module and let $\kappa \in I(\mathcal{O})$. 
An element $m \in M$ is called a \textbf{$\kappa$-torsion element} if 
$\kappa \subseteq \Ann m \coloneqq \Set{a \in \mathcal{O} | am = 0 \in M}$. 

A layer $ \pi(\boldsymbol{x}) + \pi(H_{J}) \in T_{J}/\pi(H_{J}) $ is $ \kappa $-torsion if and only if for any $ a \in \kappa $, $ \pi(a\boldsymbol{x}) \in \pi(H_{J}) $. 
In particular whether a layer $ Z \in L(\mathcal{A}(K/\mathcal{O})) $ is $ \kappa $-torsion is independent of the choice of $ J $ such that $ Z \in T_{J}/\pi(H_{J}) $ by Lemma \ref{intersection}. 
Namely, $ Z $ is $ \kappa $-torsion if and only if 
\begin{align*}
\kappa \subseteq \Ann Z \coloneqq \Set{a \in \mathcal{O} | aZ \subseteq \pi(H_{Z})}, 
\end{align*}
where $ aZ \coloneqq \Set{\pi(a\boldsymbol{x}) \in (K/\mathcal{O})^{\ell} | \boldsymbol{x} \in Z} $.

\begin{definition}
For any $\kappa \in I(\mathcal{O})$, 
the \textbf{$\kappa$-torsion subposet} of $ L(\mathcal{A}(K/\mathcal{O})) $ is defined by 
\begin{align*}
L(\A (K / \mathcal{O}) )[\kappa] \coloneqq 
\Set{ Z \in L(\A (K / \mathcal{O}) ) | 
Z \text{ is } \kappa\text{-torsion } }. 
\end{align*}
\end{definition}

\begin{remark}
In the original definition \cite[Definition 4.4(1)]{tran2019combinatorics-joctsa}, which can be considered as the case $ \mathcal{O} = \mathbb{Z} $, the poset consists of layers containing a $ \kappa $-torsion element. 
At least when $ \kappa $ is a principal ideal, one can prove easily that a layer $ Z $ contains a $ \kappa $-torsion element if and only if $ Z $ itself is a $ \kappa $-torsion element (See also the formula (4.3) in the proof of Lemma 4.5 in \cite{tran2019combinatorics-joctsa} ). 
\end{remark}

\begin{proposition}\label{order ideal}
The $ \kappa $-torsion subposet $L(\A (K / \mathcal{O}) )[\kappa]$ is an order ideal of $L (\A (K / \mathcal{O}) )$. 
Namely if $ Z \leq W $ in $L (\A (K / \mathcal{O}) )$, or equivalently $ Z \supseteq W, $ and $ W \in L(\A (K / \mathcal{O}) )[\kappa] $, then $ Z \in L(\A (K / \mathcal{O}) )[\kappa] $. 
In particular, the restriction of the M\"{o}bius function on $ L(\A (K / \mathcal{O}) ) $ to $ L(\A (K / \mathcal{O}) )[\kappa] $ coincides with the M\"{o}bius function on $ L(\A (K / \mathcal{O}) )[\kappa] $. 
\end{proposition}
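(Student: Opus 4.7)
The plan is to establish the order ideal property directly from the module-theoretic description of $\kappa$-torsion, and then to obtain the Möbius function statement as a standard consequence of being an order ideal that contains the minimum.

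For the order ideal property, suppose $Z \leq W$ in $L(\A(K/\mathcal{O}))$, equivalently $W \subseteq Z$ as subsets of $(K/\mathcal{O})^{\ell}$, and assume $W$ is $\kappa$-torsion. I need to check $\kappa \subseteq \Ann Z$, i.e., $aZ \subseteq \pi(H_{Z})$ for every $a \in \kappa$. First I invoke Lemma \ref{interval}(\ref{interval 1}) applied to the inclusion $W \subseteq Z$ to conclude $H_{W} \subseteq H_{Z}$, hence $\pi(H_{W}) \subseteq \pi(H_{Z})$. Then I fix a base point $\pi(\boldsymbol{y}) \in W \subseteq Z$, which by Lemma \ref{intersection} gives simultaneous descriptions $W = \pi(\boldsymbol{y}) + \pi(H_{W})$ and $Z = \pi(\boldsymbol{y}) + \pi(H_{Z})$.

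Now given $a \in \kappa$ and $\pi(\boldsymbol{z}) \in Z$, I write $\pi(\boldsymbol{z}) = \pi(\boldsymbol{y}) + \pi(\boldsymbol{v})$ with $\boldsymbol{v} \in H_{Z}$. Multiplying by $a$ yields
\begin{align*}
a\pi(\boldsymbol{z}) = \pi(a\boldsymbol{y}) + \pi(a\boldsymbol{v}).
\end{align*}
The first term lies in $aW \subseteq \pi(H_{W}) \subseteq \pi(H_{Z})$ by the $\kappa$-torsion hypothesis on $W$, and the second term lies in $\pi(H_{Z})$ because $H_{Z}$ is a $K$-vector subspace and so $a\boldsymbol{v} \in H_{Z}$. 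Therefore $a\pi(\boldsymbol{z}) \in \pi(H_{Z})$, which shows $aZ \subseteq \pi(H_{Z})$ and hence $Z \in L(\A(K/\mathcal{O}))[\kappa]$.

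For the statement about Möbius functions, I observe that $T_{\varnothing} = \pi(H_{\varnothing}) = (K/\mathcal{O})^{\ell}$ is trivially $\kappa$-torsion (since $\pi(H_{T_{\varnothing}}) = (K/\mathcal{O})^{\ell}$), so $L(\A(K/\mathcal{O}))[\kappa]$ inherits the minimum element of $L(\A(K/\mathcal{O}))$. Since it is an order ideal, for any $W \in L(\A(K/\mathcal{O}))[\kappa]$ the set $\{Z : Z < W\}$ is the same whether computed in $L(\A(K/\mathcal{O}))$ or in $L(\A(K/\mathcal{O}))[\kappa]$. A routine induction on the rank, using the defining recursion $\mu(W) = -\sum_{Z < W}\mu(Z)$, then shows that the two Möbius functions agree on $L(\A(K/\mathcal{O}))[\kappa]$. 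The only mildly delicate step is the first one, where one must keep track of the inclusion direction carefully in order to apply Lemma \ref{interval}(\ref{interval 1}) with the correct orientation; everything else is formal.
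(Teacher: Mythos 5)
Your proof is correct and takes essentially the same route as the paper: fix a common base point $\pi(\boldsymbol{y})\in W\subseteq Z$, use Lemma \ref{intersection} to write both layers as cosets of $\pi(H_W)$ and $\pi(H_Z)$ respectively, and push the $\kappa$-torsion condition through the inclusion $\pi(H_W)\subseteq\pi(H_Z)$. The only cosmetic difference is that you invoke the stronger inclusion $H_W\subseteq H_Z$ from Lemma \ref{interval}(\ref{interval 1}) (which relies on Lemma \ref{locally dvr}), whereas the paper only uses the elementary fact $\pi(H_W)\subseteq\pi(H_Z)$ obtained by subtracting the base point; your explicit Möbius-function induction and the check over all $\pi(\boldsymbol{z})\in Z$ are fine and merely spell out what the paper leaves implicit.
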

\begin{proof}
Let $ \pi(\boldsymbol{x}) \in W $. 
Then $ W = \pi(\boldsymbol{x}) + \pi(H_{W}) $ and $ Z = \pi(\boldsymbol{x}) + \pi(H_{Z}) $ since $ Z \supseteq W $. 
Let $ a \in \kappa $. 
Since $ W $ is $ \kappa $-torsion, $ \pi(a\boldsymbol{x}) \in \pi(H_{W}) \subseteq \pi(H_{Z}) $. 
Therefore $ Z $ is $ \kappa $-torsion. 
\end{proof}

Given an $ \mathcal{O} $-module $ M $ and $ \kappa \in I(\mathcal{O}) $, let $ M[\kappa] $ denote the submodule consisting of $ \kappa $-torsion elements. 

\begin{lemma}\label{ideal quotient gcd}
For any $\kappa$, $\mathfrak{a} \in I (\mathcal{O})$, 
$\left( \mathcal{O} / \mathfrak{a} \right) [\kappa] = \mathcal{O}/(\kappa + \mathfrak{a})$. 
\end{lemma}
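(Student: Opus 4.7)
The plan is to first identify $(\mathcal{O}/\mathfrak{a})[\kappa]$ with a colon ideal quotient, then exploit the Dedekind structure of $\mathcal{O}$ to convert that quotient into $\mathcal{O}/(\kappa+\mathfrak{a})$.

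First I would unpack the torsion condition: an element $[x]_{\mathfrak{a}} \in \mathcal{O}/\mathfrak{a}$ is $\kappa$-torsion iff $ax \in \mathfrak{a}$ for every $a \in \kappa$, iff $x$ lies in the colon ideal $(\mathfrak{a}:\kappa) \coloneqq \Set{y \in \mathcal{O} | y\kappa \subseteq \mathfrak{a}}$. This immediately yields $(\mathcal{O}/\mathfrak{a})[\kappa] = (\mathfrak{a}:\kappa)/\mathfrak{a}$ as submodules of $\mathcal{O}/\mathfrak{a}$.

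Next, setting $d \coloneqq \kappa + \mathfrak{a}$ and factoring $\mathfrak{a} = d\mathfrak{b}$, $\kappa = d\mathfrak{c}$ by unique factorization of nonzero ideals, the identity $d = \mathfrak{a}+\kappa = d(\mathfrak{b}+\mathfrak{c})$ together with invertibility of $d$ forces coprimality $\mathfrak{b}+\mathfrak{c} = \mathcal{O}$. I would then check $(\mathfrak{a}:\kappa) = \mathfrak{b}$: the inclusion $\mathfrak{b} \subseteq (\mathfrak{a}:\kappa)$ is immediate from $\mathfrak{b}\kappa = \mathfrak{b}d\mathfrak{c} = \mathfrak{c}\mathfrak{a} \subseteq \mathfrak{a}$; conversely, if $x\kappa \subseteq \mathfrak{a} = d\mathfrak{b}$, cancelling the invertible ideal $d$ gives $x\mathfrak{c} \subseteq \mathfrak{b}$, and writing $1 = b+c$ with $b \in \mathfrak{b}$, $c \in \mathfrak{c}$ yields $x = xb + xc \in \mathfrak{b} + \mathfrak{b} = \mathfrak{b}$. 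Hence $(\mathcal{O}/\mathfrak{a})[\kappa] = \mathfrak{b}/d\mathfrak{b}$.

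The main obstacle is the remaining $\mathcal{O}$-module isomorphism $\mathfrak{b}/d\mathfrak{b} \simeq \mathcal{O}/d$, since $\mathfrak{b}$ need not be principal and so no single global element witnesses the isomorphism. My plan is to prove it locally and patch: at any prime $\mathfrak{p} \mid d$, the invertible ideal $\mathfrak{b}\mathcal{O}_{\mathfrak{p}}$ is principal in the DVR $\mathcal{O}_{\mathfrak{p}}$ with some generator $b_{\mathfrak{p}}$, and multiplication by $b_{\mathfrak{p}}$ induces an isomorphism $\mathcal{O}_{\mathfrak{p}}/d\mathcal{O}_{\mathfrak{p}} \xrightarrow{\sim} \mathfrak{b}\mathcal{O}_{\mathfrak{p}}/d\mathfrak{b}\mathcal{O}_{\mathfrak{p}}$, while at primes $\mathfrak{p} \nmid d$ both sides vanish. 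Since a finite torsion $\mathcal{O}$-module over a Dedekind domain decomposes canonically as the direct sum of its localizations at the finitely many primes in the support of its annihilator, these local isomorphisms assemble into the desired global isomorphism $\mathcal{O}/d \xrightarrow{\sim} \mathfrak{b}/d\mathfrak{b} = (\mathcal{O}/\mathfrak{a})[\kappa]$, completing the identification $(\mathcal{O}/\mathfrak{a})[\kappa] = \mathcal{O}/(\kappa+\mathfrak{a})$.
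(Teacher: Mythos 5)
Your proof is correct, and it reaches the same structural endpoint as the paper's proof — identifying $(\mathcal{O}/\mathfrak{a})[\kappa]$ with $(\mathfrak{a}:\kappa)/\mathfrak{a}$ and showing this equals $\mathfrak{a}(\kappa+\mathfrak{a})^{-1}/\mathfrak{a}$ — but it takes a genuinely different route through the middle. The paper computes the colon ideal directly via fractional ideal calculus: it writes an arbitrary element of $\mathfrak{a}(\kappa+\mathfrak{a})^{-1}$ as $\sum x_i y_i$ and uses the containments $(\kappa+\mathfrak{a})^{-1}\subseteq\mathfrak{a}^{-1}$ and $(\kappa+\mathfrak{a})^{-1}\subseteq\kappa^{-1}$ for one inclusion, and for the other expands $\omega = \omega\cdot 1 = \omega\sum x_i y_i$ with $1 = \sum x_i y_i$ coming from $(\kappa+\mathfrak{a})(\kappa+\mathfrak{a})^{-1} = \mathcal{O}$. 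You instead factor out the common divisor $d = \kappa+\mathfrak{a}$, reduce to coprime ideals $\mathfrak{b},\mathfrak{c}$, and use a Bézout-type identity $1 = b+c$; this is a cleaner manipulation for many readers though it invokes unique factorization and cancellation of invertible ideals rather than staying purely in the language of inverses. The more notable difference is in the last step: the paper treats $\mathcal{O}/(\kappa+\mathfrak{a})\simeq\mathfrak{a}(\kappa+\mathfrak{a})^{-1}/\mathfrak{a}$ (equivalently $\mathcal{O}/d\simeq\mathfrak{b}/d\mathfrak{b}$) as a known fact about Dedekind domains, whereas you actually prove it by localizing at each prime in the support, principalizing $\mathfrak{b}\mathcal{O}_{\mathfrak{p}}$, and reassembling via the canonical primary decomposition of a finite torsion module. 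Your version is more self-contained; the paper's is more economical. Both are valid.
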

\begin{proof}
Since $ \mathcal{O} $ is a Dedekind domain, $ \mathcal{O}/(\kappa + \mathfrak{a}) \simeq \mathfrak{a} (\kappa + \mathfrak{a})^{-1} / \mathfrak{a} $. 
Moreover, since $\left( \mathcal{O} / \mathfrak{a} \right) [\kappa] = 
\Set{ [\omega]_{\mathfrak{a}} \in \mathcal{O}/\mathfrak{a} | a \omega \in \mathfrak{a} \text{ for all } a \in \kappa }$, 
it suffices to show that 
\begin{align*}
(\mathfrak{a} \colon \kappa) \coloneqq \Set{ \omega \in \mathcal{O} | a \omega \in \mathfrak{a} \text{ for all } a \in \kappa } = \mathfrak{a} (\kappa + \mathfrak{a})^{-1}. 
\end{align*}

Let $\omega = \sum_i x_i y_i \in \mathfrak{a} (\kappa + \mathfrak{a})^{-1}$ ($x_i \in \mathfrak{a}, y_i \in (\kappa + \mathfrak{a})^{-1}$). 
Since $ (\kappa + \mathfrak{a})^{-1} \subseteq \mathfrak{a}^{-1} $, we have $ \omega \in \mathfrak{a}(\kappa + \mathfrak{a})^{-1} \subseteq \mathfrak{a}\mathfrak{a}^{-1} = \mathcal{O} $. 
Since $(\kappa + \mathfrak{a})^{-1} \subseteq \kappa^{-1}$, for any $a \in \kappa$, 
\begin{align*}
a \left( \sum_i x_i y_i \right) = \sum_i x_i (a y_i) \in \mathfrak{a} \mathcal{O} = \mathfrak{a}.
\end{align*}
Therefore $ \omega  \in (\mathfrak{a} \colon \kappa) $ and hence $(\mathfrak{a} \colon \kappa) \supseteq \mathfrak{a} (\kappa + \mathfrak{a})^{-1} $. 

Let $\omega \in (\mathfrak{a} \colon \kappa)$. Since $\mathcal{O} = (\kappa + \mathfrak{a}) (\kappa + \mathfrak{a})^{-1}$, there exist $x_i \in (\kappa + \mathfrak{a})$, 
$y_i \in (\kappa + \mathfrak{a})^{-1}$ such that $1 = \sum_i x_i y_i$. 
Since $\omega x_i \in \omega (\kappa + \mathfrak{a}) \subseteq \omega \kappa + \mathfrak{a} \subseteq \mathfrak{a}$, we have 
\begin{align*}
\omega = \omega \left( \sum_i x_i y_i \right) = \sum_i  ( \omega x_i) y_i \in \mathfrak{a} (\kappa + \mathfrak{a})^{-1}. 
\end{align*}
Hence $(\mathfrak{a} \colon \kappa) \subseteq \mathfrak{a} (\kappa + \mathfrak{a})^{-1}$. Therefore $(\mathfrak{a} \colon \kappa) = \mathfrak{a} (\kappa + \mathfrak{a})^{-1} $. 
\end{proof}

\begin{theorem} \label{num. of conn. comp.'s}
Let $\varnothing \ne J \subseteq [n]$. For any $\kappa \in I(\mathcal{O})$, 
\begin{align*}
\left( T_J / \pi (H_J) \right) [\kappa] \simeq 
\bigoplus_{i=1}^{r(J)} \mathcal{O} / \left( \kappa + \mathfrak{d}_{J, i} \right). 
\end{align*}
In particular, $\left| \left( T_J / \pi (H_J) \right) [\kappa] \right| = \prod_{i=1}^{r(J)} N \left( \kappa + \mathfrak{d}_{J, i} \right) = m(J,\kappa)$, where $ \mathfrak{d}_{J, 1}, \dots, \mathfrak{d}_{J, r(J)} $ are the ideals  determined in Lemma \ref{structure of coker phi_J}. 
\end{theorem}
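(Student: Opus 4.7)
The plan is to reduce the assertion to the two structural facts already established: Lemma~\ref{hissu}, which identifies $T_J/\pi(H_J)$ with the torsion part of $\coker\phi_J$ and, by the structure theorem, with $\bigoplus_{i=1}^{r(J)} \mathcal{O}/\mathfrak{d}_{J,i}$, and Lemma~\ref{ideal quotient gcd}, which computes the $\kappa$-torsion of each cyclic summand as $\mathcal{O}/(\kappa+\mathfrak{d}_{J,i})$.

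First, I would observe that the formation of the $\kappa$-torsion submodule $M[\kappa] = \{m \in M \mid \kappa \subseteq \Ann m\}$ commutes with finite direct sums of $\mathcal{O}$-modules. Indeed, for $M = M_1 \oplus \cdots \oplus M_s$, an element $(m_1,\dots,m_s)$ is annihilated by every $a \in \kappa$ if and only if each $m_i$ is, so $M[\kappa] = M_1[\kappa] \oplus \cdots \oplus M_s[\kappa]$. This is essentially routine but deserves to be stated, since the definition of $\kappa$-torsion in the paper is phrased through the annihilator.

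Applying Lemma~\ref{hissu} to the left side gives
\begin{align*}
\left( T_J / \pi (H_J) \right) [\kappa]
\simeq \left( \bigoplus_{i=1}^{r(J)} \mathcal{O}/\mathfrak{d}_{J,i} \right)[\kappa]
\simeq \bigoplus_{i=1}^{r(J)} \left( \mathcal{O}/\mathfrak{d}_{J,i} \right)[\kappa],
\end{align*}
and then Lemma~\ref{ideal quotient gcd} identifies each summand with $\mathcal{O}/(\kappa + \mathfrak{d}_{J,i})$, yielding the desired isomorphism.

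For the cardinality statement, I would take absolute norms term by term: since the direct sum is finite and $N(\mathfrak{b}) = |\mathcal{O}/\mathfrak{b}|$ by definition,
\begin{align*}
\left| \left( T_J / \pi (H_J) \right) [\kappa] \right|
= \prod_{i=1}^{r(J)} \left| \mathcal{O}/(\kappa + \mathfrak{d}_{J,i}) \right|
= \prod_{i=1}^{r(J)} N(\kappa + \mathfrak{d}_{J,i}),
\end{align*}
which matches the quantity $m(J,\kappa)$ defined in Theorem~\ref{main thm}. I do not anticipate a real obstacle here: the whole content has already been packed into the preceding lemmas, and the present statement is the clean payoff that assembles them. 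The only point worth being careful about is that the isomorphism $T_J/\pi(H_J) \simeq \mathrm{tors}(\coker\phi_J)$ from Lemma~\ref{hissu} is an isomorphism of $\mathcal{O}$-modules (not merely abelian groups), so that the $\kappa$-torsion passes through it; this is explicitly noted in the statement of that lemma, so the argument goes through without further work.
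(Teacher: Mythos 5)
Your proof is correct and follows the same route as the paper: both apply Lemma~\ref{hissu} to identify $T_J/\pi(H_J)$ with $\bigoplus_i \mathcal{O}/\mathfrak{d}_{J,i}$ and then Lemma~\ref{ideal quotient gcd} to compute the $\kappa$-torsion of each cyclic summand. The only difference is that you make explicit the (routine) fact that $\kappa$-torsion commutes with finite direct sums, which the paper leaves implicit.
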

\begin{proof}
By Lemma \ref{hissu} and \ref{ideal quotient gcd}, we obtain 
\begin{align*}
\left( T_J / \pi (H_J) \right) [\kappa] 
\simeq 
\bigoplus_{i=1}^{r(J)} \left( \mathcal{O} / \mathfrak{d}_{J, i} \right) [\kappa]
\simeq \bigoplus_{i=1}^{r(J)} \mathcal{O}/(\kappa + \mathfrak{d}_{J, i}). 
\end{align*}
\end{proof}

\begin{corollary}
For any $ \kappa \in I(\mathcal{O}) $, $ L(\mathcal{A}(K/\mathcal{O}))[\kappa] = L(\mathcal{A}(K/\mathcal{O}))[\kappa + \rho_{\mathcal{A}}] $, where $ \rho_{\mathcal{A}} = \lcm \Set{\mathfrak{d}_{J, r(J)} | \varnothing \ne J \subseteq [n]} $ denotes the LCM-period of $ \mathcal{A} $
\end{corollary}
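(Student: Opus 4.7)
The plan is to prove the two containments separately, reducing the problem to the claim that $\rho_{\mathcal{A}}$ annihilates every layer $Z \in L(\mathcal{A}(K/\mathcal{O}))$.

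First I would dispose of the containment $L(\mathcal{A}(K/\mathcal{O}))[\kappa + \rho_{\mathcal{A}}] \subseteq L(\mathcal{A}(K/\mathcal{O}))[\kappa]$. Since $\kappa + \rho_{\mathcal{A}} \supseteq \kappa$ as subsets of $\mathcal{O}$, any layer $Z$ with $\kappa + \rho_{\mathcal{A}} \subseteq \Ann Z$ automatically satisfies $\kappa \subseteq \Ann Z$, so $Z$ is $\kappa$-torsion. This direction is immediate from the definition of torsion.

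For the reverse containment, it suffices to establish the key fact that every layer is $\rho_{\mathcal{A}}$-torsion. Granted this, if $Z \in L(\mathcal{A}(K/\mathcal{O}))[\kappa]$, then both $\kappa \subseteq \Ann Z$ and $\rho_{\mathcal{A}} \subseteq \Ann Z$ hold, and since $\Ann Z$ is an ideal (using that $H_Z$ is a $K$-subspace of $K^{\ell}$, hence stable under scalar multiplication from $\mathcal{O}$), we conclude $\kappa + \rho_{\mathcal{A}} \subseteq \Ann Z$, giving $Z \in L(\mathcal{A}(K/\mathcal{O}))[\kappa + \rho_{\mathcal{A}}]$.

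To prove the key fact, I would fix a layer $Z$ and choose $J \subseteq [n]$ with $Z \in T_J / \pi(H_J)$. Lemma \ref{hissu} gives the $\mathcal{O}$-module isomorphism $T_J/\pi(H_J) \simeq \bigoplus_{i=1}^{r(J)} \mathcal{O}/\mathfrak{d}_{J,i}$ with $\mathfrak{d}_{J,i} \mid \mathfrak{d}_{J,r(J)}$. Hence $\mathfrak{d}_{J,r(J)} \subseteq \mathfrak{d}_{J,i}$ for all $i$, which means $\mathfrak{d}_{J,r(J)}$ annihilates every summand and therefore annihilates the whole module $T_J/\pi(H_J)$. By the definition of the LCM-period, $\mathfrak{d}_{J,r(J)} \mid \rho_{\mathcal{A}}$, i.e., $\rho_{\mathcal{A}} \subseteq \mathfrak{d}_{J,r(J)}$, which yields $\rho_{\mathcal{A}} \subseteq \Ann(T_J/\pi(H_J)) \subseteq \Ann Z$, as desired. (Equivalently, one could invoke Theorem \ref{num. of conn. comp.'s} with $\kappa = \rho_{\mathcal{A}}$: since $\rho_{\mathcal{A}} + \mathfrak{d}_{J,i} = \mathfrak{d}_{J,i}$ for each $i$, the subgroup $(T_J/\pi(H_J))[\rho_{\mathcal{A}}]$ has the same cardinality as $T_J/\pi(H_J)$, so they coincide.)

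There is no genuine obstacle here; the proof is a short unwinding of definitions once the structural Lemma \ref{hissu} and the definition of $\rho_{\mathcal{A}}$ as an LCM are invoked. The only point requiring brief care is the verification that $\Ann Z$ behaves as an ideal, which is needed to combine the two containments $\kappa \subseteq \Ann Z$ and $\rho_{\mathcal{A}} \subseteq \Ann Z$ into $\kappa + \rho_{\mathcal{A}} \subseteq \Ann Z$.
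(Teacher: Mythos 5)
Your proof is correct and takes essentially the same approach as the paper. The paper's one-line proof observes that $\kappa + \rho_{\mathcal{A}} + \mathfrak{d}_{J,i} = \kappa + \mathfrak{d}_{J,i}$ (because $\rho_{\mathcal{A}} \subseteq \mathfrak{d}_{J,i}$) and invokes Theorem \ref{num. of conn. comp.'s}; your argument makes the same underlying fact explicit by first showing that every layer is $\rho_{\mathcal{A}}$-torsion via Lemma \ref{hissu} and the definition of $\rho_{\mathcal{A}}$ as an lcm, and then combining annihilator ideals, which is a clean and correct unpacking of the same idea.
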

\begin{proof}
Since $ \kappa + \rho_{\mathcal{A}} + \mathfrak{d}_{J,i} = \kappa + \mathfrak{d}_{J,i} $ for each $ i $, the assertion holds. 
\end{proof}

\begin{corollary}\label{1st and last constituent}
$ L(\mathcal{A}(K/\mathcal{O}))[\langle 1 \rangle] \simeq L(\mathcal{A}(K)) $ and $ L(\mathcal{A}(K/\mathcal{O}))[\rho_{\mathcal{A}}] = L(\mathcal{A}(K/\mathcal{O})) $. 
\end{corollary}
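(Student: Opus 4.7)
The plan is to apply Theorem \ref{num. of conn. comp.'s} to the two extreme values $\kappa = \langle 1 \rangle$ and $\kappa = \rho_{\mathcal{A}}$ and to check, in the first case, that the resulting bijection is an isomorphism of posets.

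For the second equality I would start with the easier direction. By the definition $\rho_{\mathcal{A}} = \lcm\{\mathfrak{d}_{J,r(J)} \mid \varnothing \ne J \subseteq [n]\}$, each $\mathfrak{d}_{J,r(J)}$ divides $\rho_{\mathcal{A}}$, and since $\mathfrak{d}_{J,i} \mid \mathfrak{d}_{J,r(J)}$ we get $\rho_{\mathcal{A}} + \mathfrak{d}_{J,i} = \mathfrak{d}_{J,i}$ for every $i$. Plugging $\kappa = \rho_{\mathcal{A}}$ into Theorem \ref{num. of conn. comp.'s} gives
\begin{align*}
(T_J/\pi(H_J))[\rho_{\mathcal{A}}] \simeq \bigoplus_{i=1}^{r(J)} \mathcal{O}/(\rho_{\mathcal{A}}+\mathfrak{d}_{J,i}) = \bigoplus_{i=1}^{r(J)} \mathcal{O}/\mathfrak{d}_{J,i} \simeq T_J/\pi(H_J),
\end{align*}
so every layer is $\rho_{\mathcal{A}}$-torsion, whence $L(\mathcal{A}(K/\mathcal{O}))[\rho_{\mathcal{A}}] = L(\mathcal{A}(K/\mathcal{O}))$.

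For the first statement I would again evaluate Theorem \ref{num. of conn. comp.'s}, this time at $\kappa = \langle 1 \rangle$. Since $\langle 1 \rangle + \mathfrak{d}_{J,i} = \langle 1 \rangle$, the module $(T_J/\pi(H_J))[\langle 1 \rangle]$ is trivial, so the only $\langle 1 \rangle$-torsion layer inside $T_J/\pi(H_J)$ is the identity coset $\pi(H_J)$ itself. Ranging over all $J$, this identifies
\begin{align*}
L(\mathcal{A}(K/\mathcal{O}))[\langle 1 \rangle] = \Set{\pi(H_J) | J \subseteq [n]},
\end{align*}
which is precisely the image of the order-preserving injection $\pi \colon L(\mathcal{A}(K)) \hookrightarrow L(\mathcal{A}(K/\mathcal{O}))$ of Proposition \ref{pi preserves order}.

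What remains is to upgrade this bijection to a poset isomorphism, and this is the only step that needs care. Concretely, I have to check that the inverse map is also order-preserving, i.e.\ that $\pi(H_{J_1}) \subseteq \pi(H_{J_2})$ implies $H_{J_1} \subseteq H_{J_2}$; this is exactly the content of Lemma \ref{locally dvr} applied with $U = H_{J_1}$, $V = H_{J_2}$, and a single representative $\boldsymbol{x}_1 = \boldsymbol{0}$. Combined with Proposition \ref{pi preserves order}, this yields the required poset isomorphism $L(\mathcal{A}(K)) \simeq L(\mathcal{A}(K/\mathcal{O}))[\langle 1 \rangle]$. No serious obstacle arises; the entire argument is a direct unpacking of Theorem \ref{num. of conn. comp.'s} together with the structural lemmas already in hand.
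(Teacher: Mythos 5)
Your proof is correct and follows essentially the same route as the paper: both evaluate Theorem \ref{num. of conn. comp.'s} (or, equivalently, the characterization of $\kappa$-torsion layers) at the two extreme ideals, and both lean on Proposition \ref{pi preserves order} and Lemma \ref{locally dvr} for the first isomorphism. The one point where you are genuinely more careful than the printed proof is worth highlighting: the paper concludes $L(\mathcal{A}(K/\mathcal{O}))[\langle 1 \rangle] \simeq L(\mathcal{A}(K))$ ``by Proposition \ref{pi preserves order},'' but that proposition only asserts that $\pi$ is an order-preserving injection, and an order-preserving bijection need not be a poset isomorphism. You correctly observe that one must also check that the inverse is order-preserving, and that this again reduces to Lemma \ref{locally dvr} (taking $r=1$, $\boldsymbol{x}_1=\boldsymbol{0}$). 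Since the paper's injectivity claim itself rests on the same Lemma \ref{locally dvr}, the underlying machinery is identical; you have simply made explicit a step the paper leaves tacit. For the second equality, your passage from the abstract isomorphism $(T_J/\pi(H_J))[\rho_{\mathcal{A}}] \simeq T_J/\pi(H_J)$ to equality of submodules implicitly uses finiteness (equal cardinality plus inclusion forces equality); this is fine, and is also how the paper uses the theorem.
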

\begin{proof}
A layer $ Z \in L(\mathcal{A}(K/\mathcal{O})) $ is $ \langle 1 \rangle $-torsion if and only if $ Z = \pi(H_{Z}) $. 
Therefore $ L(\mathcal{A}(K/\mathcal{O}))[\langle 1 \rangle] \simeq L(\mathcal{A}(K)) $ by Proposition \ref{pi preserves order}. 

By Theorem \ref{num. of conn. comp.'s}, we have $ T_{J}/\pi(H_{J})[\rho_{\mathcal{A}}] = T_{J}/\pi(H_{J}) $. 
Therefore $ L(\mathcal{A}(K/\mathcal{O}))[\rho_{\mathcal{A}}] = L(\mathcal{A}(K/\mathcal{O})) $. 
\end{proof}

\subsection{The constituents and the characteristic polynomials}

The characteristic polynomial $ \chi_{\mathcal{A}(K)}(t) $ is defined by 
\begin{align*}
\chi_{\mathcal{A}(K)}(t) \coloneqq \sum_{X \in L(\mathcal{A}(K))}\mu(H_{\varnothing}, X)t^{\dim X}, 
\end{align*}
where $ \mu $ denotes the M\"{o}bius function on $L (\A (K) )$. 

The characteristic polynomial $ \chi_{\mathcal{A}(K/\mathcal{O})}(t) $ is defined by 
\begin{align*}
\chi_{\mathcal{A}(K/\mathcal{O})}(t) \coloneqq \sum_{Z \in L(\mathcal{A}(K/\mathcal{O}))} \mu(T_{\varnothing}, Z)t^{\dim Z}, 
\end{align*}
where $ \mu $ denotes the M\"{o}bius function on $L (\A (K / \mathcal{O}) )$ and $ \dim Z = \dim H_{Z} $. 

\begin{definition}
For each $ \kappa \mid \rho_{\mathcal{A}} $, the \textbf{$ \kappa $-characteristic polynomial} of $ \mathcal{A}(K/\mathcal{O}) $ is defined by 
\begin{align*}
\chi_{\A (K / \mathcal{O})}^{\kappa}
(t) \coloneqq \sum_{Z \in L (\A (K / \mathcal{O}) ) [\kappa]} \mu (T_{\varnothing}, Z) t^{\dim Z}, 
\end{align*}
where $ \mu $ denotes the M\"{o}bius function on $L (\A (K / \mathcal{O}) )$. 
\end{definition}

By Proposition \ref{order ideal}, the restriction of the M\"{o}bius function on $L (\A (K / \mathcal{O}) )$ to $L (\A (K / \mathcal{O}) )[\kappa]$ coincides with  the M\"{o}bius function on $L (\A (K / \mathcal{O}) )[\kappa]$. 
Also, it follows that $ \chi_{\mathcal{A}(K/\mathcal{O})}^{\langle 1 \rangle}(t) = \chi_{\mathcal{A}(K)}(t) $ and $ \chi_{\mathcal{A}(K/\mathcal{O})}^{\rho_{\mathcal{A}}}(t) = \chi_{\mathcal{A}(K/\mathcal{O})}(t) $ by Corollary \ref{1st and last constituent}. 

%For each ideal $\kappa \mid \rho_0$, 
%the \textbf{$\kappa$-partial characteristic polynomial} of $\A (K / \mathcal{O})$ is defined by 
%\begin{align*}
%\chi_{\A (K / \mathcal{O}) , \kappa}
%(t) \coloneqq \sum_{Z \in L (\A (K / \mathcal{O}) ) [\kappa]} \mu (T_{\varnothing}, Z) t^{\dim Z}. 
%\end{align*}
%%where $\mu$ is the M\"{o}bius function of the finite poset $L (\A (K / \mathcal{O}) ) [\kappa]$. 
%Note that $L (\A (K / \mathcal{O})) = L (\A (K / \mathcal{O})) [\rho_0]$, and hence $\chi_{\A (K / \mathcal{O})} (t) = \chi_{\A (K / \mathcal{O}), \rho_0} (t) $ clearly. 

%Recall $ \rho_{\mathcal{A}} $ denotes the LCM-period defined by 
%\begin{align*}
%\rho_{\mathcal{A}} \coloneqq \operatorname{lcm} \Set{I_{J, r(J)} | \varnothing \ne J \subseteq [n]}. 
%\end{align*}

%The \textbf{characteristic polynomial} of the toric arrangement $\A (K / \mathcal{O})$ is defined by 
%\begin{align*}
%\chi_{\A (K / \mathcal{O})}
%(t) \coloneqq \sum_{Z \in L (\A (K / \mathcal{O}) )} \mu (T_{\varnothing}, Z) t^{\dim Z}, 
%\end{align*}
%where $\mu$ is the M\"{o}bius function of the finite poset $L (\A (K / \mathcal{O}) )$. 

\begin{example}\label{example2-2}
Let $ \mathcal{O} = \mathbb{Z}[\sqrt{-1}], K = \mathbb{Q}(\sqrt{-1}) $, and
\begin{align*}
\mathcal{A} = \Set{\begin{pmatrix}
1 \\ 1
\end{pmatrix}, \begin{pmatrix}
1 \\ -1
\end{pmatrix}, \begin{pmatrix}
1 \\ \sqrt{-1}
\end{pmatrix}, \begin{pmatrix}
1 \\ -\sqrt{-1}
\end{pmatrix}}
\end{align*}
as in Example \ref{example2}. 
Figure \ref{Fig:ex Hasse2} shows the Hasse diagram of $ L(\mathcal{A}(K/\mathcal{O})) $. 
\begin{figure}[t] 
\centering
\begin{tikzpicture}
\node (empty) at (0,-1) {$T_{\varnothing}$};
\node (T1) at (-6,1) {$ \{x_{1}+x_{2} \in \mathcal{O}\} $}; 
\node (T2) at (-2,1) {$ \{x_{1}-x_{2} \in \mathcal{O}\} $}; 
\node (T3) at ( 2,1) {$ \{x_{1}+\sqrt{-1}x_{2} \in \mathcal{O}\} $}; 
\node (T4) at ( 6,1) {$ \{x_{1}-\sqrt{-1}x_{2} \in \mathcal{O}\} $}; 
\node (1) at (-7,5) {$ \left(\dfrac{1}{2},\dfrac{1}{2} \right) $}; 
\node (2) at (-4.5,5) {$ \left(\dfrac{\sqrt{-1}}{2},\dfrac{\sqrt{-1}}{2} \right) $}; 
\node (3) at (-2.1,5) {$ (0,0) $}; 
\node (4) at ( 1,5) {$ \left(\dfrac{1+\sqrt{-1}}{2},\dfrac{1+\sqrt{-1}}{2}\right) $}; 
\node (5) at ( 4.8,5) {$ \left(\dfrac{\sqrt{-1}}{2},\dfrac{1}{2} \right) $}; 
\node (6) at ( 7.5,5) {$ \left(\dfrac{1}{2},\dfrac{\sqrt{-1}}{2} \right) $}; 
\draw (empty) -- (T1);
\draw (empty) -- (T2);
\draw (empty) -- (T3);
\draw (empty) -- (T4);
\draw (1) -- (T1);
\draw (1) -- (T2);
\draw (2) -- (T1);
\draw (2) -- (T2);
\draw (3) -- (T1);
\draw (3) -- (T2);
\draw (3) -- (T3);
\draw (3) -- (T4);
\draw (4) -- (T1);
\draw (4) -- (T2);
\draw (4) -- (T3);
\draw (4) -- (T4);
\draw (5) -- (T3);
\draw (5) -- (T4);
\draw (6) -- (T3);
\draw (6) -- (T4);
\end{tikzpicture}
\caption{The Hasse diagram of $ L(\mathcal{A}(K/\mathcal{O})) $ in Example \ref{example2-2} } \label{Fig:ex Hasse2}
\end{figure}
Recall that the LCM-period of $ \mathcal{A} $ is $ \rho_{\mathcal{A}} = \langle 2 \rangle = \mathfrak{p}^{2} $, where $ \mathfrak{p} = \langle 1+\sqrt{-1} \rangle $. 
The subposets of $ L(\mathcal{A}(K\mathcal{O})) $ are as follows: 
\begin{align*}
L(\mathcal{A}(K/\mathcal{O}))[\langle 1 \rangle] &= [T_{\varnothing}, (0,0)] \\
L(\mathcal{A}(K/\mathcal{O}))[\mathfrak{p}] &= [T_{\varnothing}, (0,0)] \cup \left[T_{\varnothing}, \left(\dfrac{\sqrt{-1}}{2},\dfrac{\sqrt{-1}}{2} \right)\right] \\
L(\mathcal{A}(K/\mathcal{O}))[\mathfrak{p}^{2}] &= L(\mathcal{A}(K/\mathcal{O})). 
\end{align*}
Therefore we obtain 
\begin{align*}
\chi_{\mathcal{A}(K/\mathcal{O})}^{\langle 1 \rangle}(t) = t^{2}-4t+3, \quad
\chi_{\mathcal{A}(K/\mathcal{O})}^{\mathfrak{p}}(t) = t^{2}-4t+6, \quad
\chi_{\mathcal{A}(K/\mathcal{O})}^{\mathfrak{p}^{2}}(t) = t^{2}-4t+10. 
\end{align*}
In particular, by Example \ref{example2}, we have $f^{\kappa} _{\A} (t) = \chi_{\A (K / \mathcal{O})}^{\kappa} (t)$ for each ideal 
$\kappa \mid \rho_{\mathcal{A}} = \mathfrak{p}^{2} $. 
\end{example}

\begin{example}\label{example1-2}
Let $ \mathcal{O} = \mathbb{Z}[\sqrt{-5}], K = \mathbb{Q}(\sqrt{-5}) $, and 
\begin{align*}
\A = \{ c_1, c_2 \}, 
\ \text{ where }
c_{1} = \begin{pmatrix}
2  \\
1 - \sqrt{-5}
\end{pmatrix}, \
c_{2} = \begin{pmatrix}
1 + \sqrt{-5} \\
3
\end{pmatrix}
\end{align*}
as in Example \ref{example1}. 
Then $H \coloneqq H_{\{ 1 \}} = H_{\{2\}} = H_{\{1, 2\}} = \langle (-3, 1+\sqrt{-5}) \rangle_{K} \subseteq K^{2} $ and 
we have 
\begin{align*}
T_J / \pi (H_J) 
= \begin{cases}
\{T_{\varnothing}\} & \text{ if } J = \varnothing, \\
\left\{ \overline{(0,0)}, \ \overline{(1/2, 0)} \right\}  & \text{ if } J = \{ 1 \}, \\
\left\{ \overline{(0,0)}, \ \overline{(0, 1/3)}, \ \overline{(0, 2/3)} \right\}   & \text{ if } J = \{ 2 \}, \\
\left\{ \overline{(0,0)} \right\} & \text{ if } J = \{ 1, 2 \}, 
\end{cases}
\end{align*}
where $ \overline{(a,b)} $ denotes $ \pi(a,b) + \pi(H) $. 
Figure \ref{Fig:ex Hasse} shows the Hasse diagram of the poset of layers $ L(\mathcal{A}(K/\mathcal{O})) $. 

\begin{figure}[t] 
\centering
\begin{tikzpicture}
  \node (0) at (-3,0) {$\overline{(0,0)}$};
  \node (11) at (-1,0) {$\overline{(1/2,0)}$};
  \node (21) at (1,0) {$\overline{(0,1/3)}$};
  \node (22) at (3,0) {$\overline{(0,2/3)}$};
  \node (empty) at (0,-2) {$T_{\varnothing}$};
  \draw (empty) -- (0);
  \draw (empty) -- (11);
  \draw (empty) -- (21);
  \draw (empty) -- (22);
\end{tikzpicture}
\caption{The Hasse diagram of $ L(\mathcal{A}(K/\mathcal{O})) $ in Example \ref{example1-2} } \label{Fig:ex Hasse}
\end{figure}

Recall the LCM-period of $ \mathcal{A} $ is $ \rho_{\mathcal{A}} = \mathfrak{p}\mathfrak{q} = \langle 1+\sqrt{-5} \rangle $, where $ \mathfrak{p} = \langle 2, 1-\sqrt{-5} \rangle, \mathfrak{q} = \langle 3, 1+\sqrt{-5} \rangle $. 
The torsion subposets of $ L(\A (K / \mathcal{O})) $ are as follows: 
\begin{gather*}
L(\A (K / \mathcal{O}))[\langle 1 \rangle] = \{ T_{\varnothing} , \overline{(0,0)} \}, \quad 
L(\A (K / \mathcal{O}))[\mathfrak{p}] = \{ T_{\varnothing} , \overline{(0,0)}, \overline{(1/2,0)} \}, \\
L(\A (K / \mathcal{O}))[\mathfrak{q}] = \{ T_{\varnothing} , \overline{(0,0)}, \overline{(0,1/3)}, \overline{(0,2/3)} \}, \quad
%L(\A (K / \mathcal{O})) = 
L(\A (K / \mathcal{O})) [\mathfrak{p} \mathfrak{q}] 
= L(\A (K / \mathcal{O})).
\end{gather*}

Since $\dim T_{\varnothing} = 2$ and $\dim Z = \dim H = 1$ for each $Z \in L(\A (K / \mathcal{O}))  \setminus \{ T_{\varnothing} \}$, we obtain 
\begin{alignat*}{2}
\chi_{\A (K / \mathcal{O})}^{\langle 1 \rangle} (t) 
&= t^2 - t, 
& \qquad
\chi_{\A (K / \mathcal{O})}^{\mathfrak{p}} (t) 
&= t^2 - 2t, 
\\
\chi_{\A (K / \mathcal{O})}^{\mathfrak{q}} (t) 
&= t^2 - 3t, 
&
\chi_{\A (K / \mathcal{O})}^{\mathfrak{p}\mathfrak{q}} (t) 
&= t^2 - 4t.
\end{alignat*}
In particular, by Example \ref{example1}, we have $f^{\kappa} _{\A} (t) = \chi_{\A (K / \mathcal{O})}^{\kappa} (t)$ for each ideal 
$\kappa \mid \rho_{\mathcal{A}} = \mathfrak{p} \mathfrak{q} $. 
\end{example}

In the rest of this subsection, we will prove that for each ideal $\kappa \mid \rho_{\mathcal{A}}$, 
the $\kappa$-constituent $f^{\kappa}_{\mathcal{A}} (t)$ of the characteristic quasi-polynomial $\chi_\A^{\mathrm{quasi}} $ 
coincides with the $\kappa$-characteristic polynomial $\chi_{\A (K / \mathcal{O})}^{\kappa} (t)$ of $\A (K / \mathcal{O})$. 

\begin{definition}
Given a layer $ Z \in L(\mathcal{A}(K/\mathcal{O})) $, define $ D(Z) $ by 
\begin{align*}
D(Z) \coloneqq \Set{J \subseteq [n] | Z \in T_{J}/\pi(H_{J})}. 
\end{align*}
\end{definition}

\begin{lemma} \label{D(Z)}
Let $ Z \in L(\mathcal{A}(K/\mathcal{O})) $. 
Then the following hold. 
\begin{enumerate}[(i)]
\item\label{D(Z) 1} If $ J \in D(Z) $, then $ H_{J} = H_{Z} $. 
\item\label{D(Z) 2} $ J_{Z} = \Set{j \in [n] | T_{j} \supseteq Z} $ is a unique maximal element of $ D(Z) $. 
\item\label{D(Z) 3} $ \displaystyle\bigsqcup_{\substack{W \in L(\mathcal{A}(K/\mathcal{O})) \\ W \supseteq Z}}D(W) = \Set{J \subseteq [n] | J \subseteq J_{Z}}.  $
\end{enumerate}
\end{lemma}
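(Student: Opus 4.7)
The plan is to handle the three parts in order, using (a) the uniqueness of the subspace $H_Z$ attached to each layer (Lemma \ref{intersection}), (b) the order-preserving map $\psi$ from Lemma \ref{interval}(\ref{interval 1}), and (c) the connectedness-type statement of Proposition \ref{layer connectedness}. Part (\ref{D(Z) 1}) falls out immediately: if $J \in D(Z)$ then $Z = \pi(\boldsymbol{x}) + \pi(H_J)$ for any $\pi(\boldsymbol{x}) \in Z$, and the uniqueness in Lemma \ref{intersection} forces $H_J = H_Z$.

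For (\ref{D(Z) 2}), the maximality half is easy: any $J \in D(Z)$ satisfies $Z \subseteq T_J = \bigcap_{j \in J} T_j$, hence $J \subseteq J_Z$. The substantive step is to show $J_Z \in D(Z)$. Picking any $J_0 \in D(Z)$, we already have $J_0 \subseteq J_Z$ and $H_Z = H_{J_0} \supseteq H_{J_Z}$ from part (\ref{D(Z) 1}). On the other hand $Z \subseteq T_{J_Z}$ by definition of $J_Z$, so Proposition \ref{layer connectedness} produces a layer $W \in T_{J_Z}/\pi(H_{J_Z})$ with $Z \subseteq W$; the order-preserving map $\psi$ then gives $H_Z \subseteq H_W = H_{J_Z}$, and the two inclusions combine to $H_Z = H_{J_Z}$. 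Taking $\pi(\boldsymbol{x}) \in Z \subseteq W$, the uniqueness in Lemma \ref{intersection} identifies $W = \pi(\boldsymbol{x}) + \pi(H_{J_Z}) = \pi(\boldsymbol{x}) + \pi(H_Z) = Z$, so $J_Z \in D(Z)$.

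For (\ref{D(Z) 3}), disjointness of the union is immediate because distinct elements of $T_J/\pi(H_J)$ are disjoint cosets, so $J \in D(W_1) \cap D(W_2)$ forces $W_1 = W_2$. For the inclusion $\subseteq$, given $J \in D(W)$ with $W \supseteq Z$, part (\ref{D(Z) 2}) applied to $W$ gives $J \subseteq J_W$, and $W \supseteq Z$ yields $J_W \subseteq J_Z$, whence $J \subseteq J_Z$. For the reverse inclusion, given $J \subseteq J_Z$ we have $T_J \supseteq Z$, so Proposition \ref{layer connectedness} supplies a layer $W \in T_J/\pi(H_J)$ containing $Z$, and then $J \in D(W)$. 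The main obstacle is the identification $H_Z = H_{J_Z}$ in part (\ref{D(Z) 2}), which requires simultaneously invoking Lemma \ref{intersection}, the monotonicity of $\psi$ in Lemma \ref{interval}(\ref{interval 1}), and Proposition \ref{layer connectedness}; once this identification is secured, the remaining assertions reduce to set-theoretic bookkeeping.
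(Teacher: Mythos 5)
Your proof is correct, and the overall structure tracks the paper's argument: part (\ref{D(Z) 1}) from Lemma \ref{intersection}, part (\ref{D(Z) 2}) from Proposition \ref{layer connectedness} together with Lemma \ref{interval}(\ref{interval 1}), and part (\ref{D(Z) 3}) from the first two parts. That said, two of your steps in (\ref{D(Z) 3}) are genuinely leaner than what the paper does. For the disjointness you observe directly that $T_J/\pi(H_J)$ is a set of pairwise-disjoint cosets, so if $J \in D(W_1) \cap D(W_2)$ with both $W_1, W_2 \supseteq Z$ then $W_1 \cap W_2 \supseteq Z \neq \varnothing$ forces $W_1 = W_2$; the paper instead routes this through the injectivity of $\psi|_{[T_\varnothing,Z]}$ from Lemma \ref{interval}(\ref{interval 2}), a heavier tool than needed here (it would be worth stating explicitly that nonemptiness of $W_1 \cap W_2$ comes from both containing $Z$). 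For the inclusion $\subseteq$, you apply part (\ref{D(Z) 2}) to the layer $W$ to get $J \subseteq J_W$ and then use the monotonicity $W \supseteq Z \Rightarrow J_W \subseteq J_Z$, which follows at once from the definition of $J_W$. The paper instead forms $J \cup J_Z$, checks $H_{J \cup J_Z} = H_{J_Z}$ and $Z \subseteq T_{J \cup J_Z}$, deduces $J \cup J_Z \in D(Z)$, and then invokes maximality—several steps where your one-line argument suffices. Your reverse inclusion also swaps the paper's explicit construction of $W = \pi(\boldsymbol{x}) + \pi(H_J)$ for an invocation of Proposition \ref{layer connectedness}; both are fine, and yours keeps the proof uniform in how it produces layers. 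In short: same skeleton, cleaner flesh on part (\ref{D(Z) 3}).
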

\begin{proof}
(\ref{D(Z) 1}) It follows from Lemma \ref{intersection}. 

(\ref{D(Z) 2}) Let $ J \in D(Z) $. 
Then $ T_{J} \supseteq Z $ and hence $ T_{j} \supseteq Z $ for any $ j \in J $. 
Therefore $ J \subseteq J_{Z} $ and hence the maximality holds. 
Also, we have $ H_{Z} = H_{J} \supseteq H_{J_{Z}} $. 

For each $ j \in J_{Z} $, we have $ T_{j} \supseteq Z $ and hence $ T_{J_{Z}} \supseteq Z  $. 
By Proposition \ref{layer connectedness}, there exists a layer $ W \in T_{J_{Z}}/\pi(H_{J_{Z}}) $ such that $ W \supseteq Z $. 
Therefore we have $ H_{J_{Z}} \supseteq H_{Z} $ by Lemma \ref{interval}(\ref{interval 1}). 
Thus $ H_{J_{Z}} = H_{Z} $ and $ J_{Z} \in D(Z) $. 

(\ref{D(Z) 3}) First, we prove that the union is disjoint.  
Suppose that $ W_{1} \cap W_{2} \supseteq Z $ and $ W_{1} \neq W_{2} $. 
Assume that there exists $ J \in D(W_{1}) \cap D(W_{2}) $. 
Then $ H_{W_{1}} = H_{J} = H_{W_{2}} $. 
By Lemma \ref{interval}(\ref{interval 2}), we have $ W_{1} = W_{2} $, a contradiction. 
Thus $D(W_{1}) \cap D(W_{2}) = \varnothing$. 

Second, let $J \in D(W)$ such that $W \supseteq Z$. 
We will prove $ J \subseteq J_{Z} $. 
By (\ref{D(Z) 1}) and Lemma \ref{interval}(\ref{interval 1}), $H_{J_Z} = H_{Z} \subseteq H_{W} = H_J$. 
Thus $H_{J \cup J_Z} = H_{J} \cap H_{J_Z} = H_{J_Z}$. 
Furthermore, since $Z \subseteq T_{J_Z} $ and $Z \subseteq W \subseteq T_J$, we have 
$Z \subseteq T_{J_Z} \cap T_{J} 
= T_{J \cup J_Z}$. 
Therefore $ Z $ can be written as $ Z = \pi(\boldsymbol{x}) + \pi(H_{J \cup J_{Z}}) $ with $ \pi(\boldsymbol{x}) \in T_{J \cup J_{Z}} $ and hence $J \cup J_Z \in D(Z)$. 
By (\ref{D(Z) 2}), we have $J \cup J_Z \subseteq J_Z$. 
Thus $J \subseteq J_Z$. 

Finally, suppose that $J \subseteq J_Z$. 
Then we have $ Z \subseteq T_{J_{Z}} \subseteq T_{J} $ and $ H_{J_{Z}} \subseteq H_{J} $. 
Let $ \pi(\boldsymbol{x}) \in Z $ and $ W \coloneqq \pi(\boldsymbol{x}) + \pi(H_{J}) $. 
Then $Z = \pi (\boldsymbol{x}) + \pi (H_{J_Z}) \subseteq \pi (\boldsymbol{x}) + \pi (H_{J}) = W$ and $W = \pi (\boldsymbol{x}) + \pi (H_{J}) \in T_J / \pi (H_J)$, that is, $J \in D(W)$. 
\end{proof}

\begin{lemma}[See also {\cite[Lemma 5.5]{moci2012tutte-totams}}] \label{Moci lemma}
For any $Z \in L(\A (K / \mathcal{O}) )$, 
\begin{align*}
\mu (T_{\varnothing}, Z) = \sum_{J \in D(Z)} (-1)^{|J|}. 
\end{align*}
\end{lemma}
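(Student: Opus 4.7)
The plan is to define the candidate function $g(Z) \coloneqq \sum_{J \in D(Z)} (-1)^{|J|}$ and show that it satisfies the same defining recursion as $\mu(T_\varnothing, -)$, so that the two functions must agree by Möbius inversion on the poset $L(\A(K/\mathcal{O}))$. Recall that $T_\varnothing$ is the unique minimum, so intervals of the form $[T_\varnothing, Z]$ are just the principal order ideals generated by $Z$, and $\mu(T_\varnothing, -)$ is the unique function on $L(\A(K/\mathcal{O}))$ satisfying $\sum_{W \leq Z} \mu(T_\varnothing, W) = \delta_{T_\varnothing, Z}$ for all $Z$.

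First I would compute the partial sum of $g$ over the principal order ideal below $Z$. Since the order is reverse inclusion, $W \leq Z$ means $W \supseteq Z$ as subsets of $(K/\mathcal{O})^{\ell}$. Applying Lemma \ref{D(Z)}(\ref{D(Z) 3}), which gives the disjoint decomposition $\bigsqcup_{W \supseteq Z} D(W) = \{J \subseteq [n] \mid J \subseteq J_Z\}$, yields
\begin{align*}
\sum_{W \leq Z} g(W) \;=\; \sum_{W \supseteq Z} \sum_{J \in D(W)} (-1)^{|J|} \;=\; \sum_{J \subseteq J_Z} (-1)^{|J|}.
\end{align*}
This is the heart of the argument: the disjointness turns a double sum over layers into a single sum over subsets of the fixed set $J_Z$.

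Next I would use the standard identity that $\sum_{J \subseteq J_Z} (-1)^{|J|}$ equals $1$ when $J_Z = \varnothing$ and $0$ otherwise, together with the characterization $J_Z = \varnothing \iff Z = T_\varnothing$. One direction is immediate since $D(T_\varnothing) = \{\varnothing\}$ forces $J_{T_\varnothing} = \varnothing$. For the converse, if $Z \neq T_\varnothing$, then $Z$ lies in some $T_J / \pi(H_J)$ with $J \neq \varnothing$, so $Z \subseteq T_J \subseteq T_j$ for every $j \in J$, forcing $J \subseteq J_Z$ and hence $J_Z \neq \varnothing$. Combining these observations gives $\sum_{W \leq Z} g(W) = \delta_{T_\varnothing, Z}$.

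Finally, since both $g$ and $\mu(T_\varnothing, -)$ satisfy the same triangular system with the same right-hand side on the finite poset $L(\A(K/\mathcal{O}))$, Möbius inversion (equivalently, induction on the length of $Z$ in the poset) forces $g(Z) = \mu(T_\varnothing, Z)$, which is the claim. There is no serious obstacle: the bulk of the work has already been done in Lemma \ref{D(Z)}, and the only subtlety is the bookkeeping with the reverse-inclusion order and the verification that $J_Z$ is empty exactly at the minimum.
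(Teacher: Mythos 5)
Your proposal is correct and is essentially the paper's argument in a different packaging: the paper runs an induction on $\codim Z$, writing $\mu(T_\varnothing,Z) = -\sum_{W \supsetneq Z}\mu(T_\varnothing,W)$, applying the inductive hypothesis, and then invoking Lemma~\ref{D(Z)}(\ref{D(Z) 3}) together with $\sum_{J \subseteq J_Z}(-1)^{|J|}=0$ for $J_Z \neq \varnothing$, whereas you verify directly that $g(Z) = \sum_{J\in D(Z)}(-1)^{|J|}$ satisfies $\sum_{W\leq Z} g(W) = \delta_{T_\varnothing,Z}$ and appeal to uniqueness of the solution of the triangular system. These are the same computation, since the inductive step in the paper is precisely the verification of that recursion; if anything, your version is slightly cleaner because it makes explicit the observation that $J_Z = \varnothing$ iff $Z = T_\varnothing$, which the paper leaves implicit when it drops the $\sum_{J\subseteq J_Z}(-1)^{|J|}$ term.
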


\begin{proof}
We prove it by induction on the codimension of $Z$. 
If $\codim Z = 0$ then $Z = T_{\varnothing}$ and $D(T_{\varnothing}) = \{ \varnothing \}$. 
Hence the statement holds. 
We assume that $\codim Z \geq 1$. 
By the definition of the M\"{o}bius function, the induction hypothesis, and Lemma \ref{D(Z)}(\ref{D(Z) 3}), we have 
\begin{align*}
\mu (T_{\varnothing}, Z) 
&= - \sum_{W \supsetneq Z} \mu (T_{\varnothing}, W) \\
&= - \sum_{W \supsetneq Z} \sum_{J \in D(W)} (-1)^{|J|} \\
&= \sum_{J \in D(Z)}(-1)^{|J|} - \sum_{W \supseteq Z} \sum_{J \in D(W)} (-1)^{|J|}  \\
&= \sum_{J \in D(Z)}(-1)^{|J|} - \sum_{J \subseteq J_{Z}}(-1)^{|J|} \\
&= \sum_{J \in D(Z)}(-1)^{|J|}. 
\end{align*}
\end{proof}

\begin{theorem}[Generalization of Theorem \ref{TY19}] \label{main thm2}
%For each ideal $\kappa \mid \rho_0$, 
%the characteristic polynomial $\chi_{\A (K / \mathcal{O}) , \kappa} (t)$ of $L (\A (K / \mathcal{O})) [\kappa]$ coincides with the $\kappa$-constituent $f^{\kappa}_{\mathcal{A}} (t)$ of the characteristic quasi-polynomial $\chi_\A^{\mathrm{quasi}} $. 
Let $\kappa \mid \rho_{\mathcal{A}}$. Then 
\begin{align*}
f^{\kappa}_{\mathcal{A}} (t) = \chi_{\A (K / \mathcal{O})}^{\kappa} (t). 
\end{align*}
\end{theorem}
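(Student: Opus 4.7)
The plan is to compute $\chi_{\A (K/\mathcal{O})}^{\kappa}(t)$ directly from its definition, swap the order of summation using the description of the M\"obius function given by Lemma \ref{Moci lemma}, and match the result term-by-term with the explicit formula for $f_{\A}^{\kappa}(t)$ provided by Theorem \ref{main thm}. The identification of the coefficients is where Theorem \ref{num. of conn. comp.'s} enters crucially.

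First, I would expand the definition
\[
\chi_{\A(K/\mathcal{O})}^{\kappa}(t) = \sum_{Z \in L(\A(K/\mathcal{O}))[\kappa]} \mu(T_{\varnothing}, Z)\, t^{\dim Z}
\]
using Lemma \ref{Moci lemma}, which gives $\mu(T_{\varnothing}, Z) = \sum_{J \in D(Z)} (-1)^{|J|}$. This yields a double sum ranging over pairs $(Z, J)$ with $Z \in L(\A(K/\mathcal{O}))[\kappa]$ and $J \in D(Z)$. I would then interchange summations so that $J$ runs over subsets of $[n]$ first; for each fixed $J$, the layers $Z$ satisfying $J \in D(Z)$ are exactly the cosets in $T_J/\pi(H_J)$, and by Lemma \ref{D(Z)}(\ref{D(Z) 1}), every such $Z$ has $H_Z = H_J$, hence $\dim Z = \ell - r(J)$ (using that $\rank \phi_J = r(J)$).

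After the swap, the inner sum counts $\kappa$-torsion layers in $T_J/\pi(H_J)$, i.e.\ the cardinality of $(T_J/\pi(H_J))[\kappa]$. By Theorem \ref{num. of conn. comp.'s}, this cardinality equals $m(J,\kappa)$ for nonempty $J$; for $J = \varnothing$ the unique layer $T_{\varnothing}$ is trivially $\kappa$-torsion, in agreement with the convention $m(\varnothing,\kappa) = 1$. Therefore
\[
\chi_{\A(K/\mathcal{O})}^{\kappa}(t) = \sum_{J \subseteq [n]} (-1)^{|J|}\, m(J,\kappa)\, t^{\ell - r(J)},
\]
which is exactly the formula for $f_{\A}^{\kappa}(t)$ from Theorem \ref{main thm}.

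The main subtlety is making sure the interchange of summation is legitimate and that every $Z \in L(\A(K/\mathcal{O}))[\kappa]$ is captured the right number of times. The set $D(Z)$ was introduced precisely so that $Z \in T_J/\pi(H_J)$ iff $J \in D(Z)$, and Lemma \ref{Moci lemma} absorbs the potential overcounting into the M\"obius values; this is the conceptual core. Beyond that, the argument is a purely formal bookkeeping and requires no further hard input, so I anticipate no real obstacle once the role of Theorem \ref{num. of conn. comp.'s} and Lemma \ref{Moci lemma} is recognized.
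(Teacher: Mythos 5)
Your proposal is correct and takes essentially the same route as the paper: expand $\mu(T_{\varnothing}, Z)$ via Lemma \ref{Moci lemma}, interchange the order of summation so that $J$ runs over subsets of $[n]$ first, note that $J \in D(Z)$ forces $\dim Z = \ell - r(J)$, and identify the inner sum with $m(J,\kappa)$ via Theorem \ref{num. of conn. comp.'s}. The paper merely organizes the computation degree-by-degree in $r = \dim Z$ before interchanging, but the key steps and lemmas invoked are identical.
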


\begin{proof}
By Theorem \ref{main thm}, 
the $\kappa$-constituent $f^{\kappa}_{\mathcal{A}} (t)$ of the characteristic quasi-polynomial $\chi_\A^{\mathrm{quasi}} $ is given by 
\begin{align*}
f_{\A}^{\kappa} (t) 
&= \sum_{J \subseteq [n]} (-1)^{|J|} \, m(J, \kappa) t^{\ell - r(J)} \\
&= t^{\ell} + 
\sum_{r = 0}^{\ell - 1}
\left(
\sum_{\substack{J \subseteq [n] \\  \ell - r(J) = r}} 
(-1)^{|J|} \, m(J, \kappa)
\right)
t^{r}. 
\end{align*}
The $\kappa$-characteristic polynomial  $\chi_{\A (K / \mathcal{O})}^{\kappa} (t)$ of $\A (K / \mathcal{O})$ is given by 
\begin{align*}
\chi_{\A (K / \mathcal{O})}^{\kappa} (t) = \sum_{Z \in L (\A (K / \mathcal{O})) [\kappa] } \mu (T_{\varnothing}, Z) t^{\dim Z} 
= t^{\ell} + 
\sum_{r = 0}^{\ell - 1}
\left(
\sum_{\substack{Z \in L (\A (K / \mathcal{O})) [\kappa] \\ \dim Z = r}} 
\mu (T_{\varnothing}, Z)
\right)
t^{r}. 
\end{align*}
Since if $J \in D(Z)$ then $\dim Z = \dim H_{J} = \ell - r(J)$, 
by Theorem \ref{num. of conn. comp.'s} and Lemma \ref{Moci lemma}, for each $0 \leq r \leq \ell - 1$, 
\begin{align*}
\sum_{\substack{Z \in L (\A (K / \mathcal{O})) [\kappa] \\ \dim Z = r}} 
\mu (T_{\varnothing}, Z)
&= \sum_{\substack{Z \in L (\A (K / \mathcal{O})) [\kappa] \\ \dim Z = r}} 
\ 
\sum_{J \in D(Z)} (-1)^{|J|} \\
&= \sum_{\substack{J \subseteq [n] \\ \ell - r(J) = r}} 
\ 
\sum_{\substack{Z \in L (\A (K / \mathcal{O})) [\kappa] \\ J \in D(Z)}} 
(-1)^{|J|} \\
&= \sum_{\substack{J \subseteq [n] \\ \ell - r(J) = r}}  (-1)^{|J|} \, m(J, \kappa).
\end{align*}
Therefore we obtain $f_{\A}^{\kappa} (t)  = \chi_{\A (K / \mathcal{O})}^{\kappa} (t) $. 
\end{proof}

\section{Minimality of the LCM-period} \label{proof of minimality}

For a layer $Z \in L(\A (K/\mathcal{O}))$, put 
\begin{align*}
\tau (Z) \coloneqq \Ann Z = \Set{ a \in \mathcal{O} | a Z \subseteq \pi (H_{Z}) }.  
\end{align*}

Note that $ Z \in L(\A (K/\mathcal{O}))[\kappa] $ if and only if $ \tau (Z) \mid \kappa $. 

\begin{lemma}[See also {\cite[Lemma 6.2]{higashitani2022period-imrn}}] \label{tau lemma 2}
Let $\varnothing \ne J \subseteq [n]$. 
\begin{enumerate}[(i)]
\item 
$\tau (Z) \mid \mathfrak{d}_{J, r(J)} $ for any $Z \in T_J / \pi (H_J)$. 
\item 
$\tau (Z) = \mathfrak{d}_{J, r(J)} $ for some $Z \in T_J / \pi (H_J)$. 
\end{enumerate}
\end{lemma}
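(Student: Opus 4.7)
The plan is to translate the statement, via the $\mathcal{O}$-module isomorphism $T_J/\pi(H_J) \simeq \bigoplus_{i=1}^{r(J)}\mathcal{O}/\mathfrak{d}_{J,i}$ from Lemma \ref{hissu}, into elementary statements about annihilators in a direct sum of cyclic $\mathcal{O}$-modules. To do this, I would first verify that $\tau(Z)$ coincides with the annihilator $\Ann_{\mathcal{O}}(Z)$ of $Z$ regarded as an element of $T_J/\pi(H_J)$. Since $Z \in T_J/\pi(H_J)$ we have $J \in D(Z)$, so Lemma \ref{D(Z)}(\ref{D(Z) 1}) gives $H_Z = H_J$. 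For any representative $\pi(\boldsymbol{x}) \in Z$ and any $a \in \mathcal{O}$, the inclusion $aH_J \subseteq H_J$ (valid because $H_J$ is a $K$-vector subspace) yields $aZ = \pi(a\boldsymbol{x}) + \pi(aH_J) \subseteq \pi(a\boldsymbol{x}) + \pi(H_J)$, so $aZ \subseteq \pi(H_Z) = \pi(H_J)$ if and only if $\pi(a\boldsymbol{x}) \in \pi(H_J)$, i.e., the image of $aZ$ vanishes in the quotient $T_J/\pi(H_J)$. Hence $\tau(Z) = \Ann_{\mathcal{O}}(Z)$, and this annihilator is preserved by the isomorphism of Lemma \ref{hissu}.

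For part (i), under the isomorphism each summand $\mathcal{O}/\mathfrak{d}_{J,i}$ is killed by $\mathfrak{d}_{J,i}$, and the divisibility chain $\mathfrak{d}_{J,i} \mid \mathfrak{d}_{J,r(J)}$ (equivalently $\mathfrak{d}_{J,r(J)} \subseteq \mathfrak{d}_{J,i}$) shows that the ideal $\mathfrak{d}_{J,r(J)}$ annihilates every element of the direct sum. Thus $\mathfrak{d}_{J,r(J)} \subseteq \tau(Z)$, which is exactly $\tau(Z) \mid \mathfrak{d}_{J,r(J)}$. For part (ii), I would take the layer $Z$ corresponding to the tuple $(0,\dots,0,\overline{1}) \in \bigoplus_{i=1}^{r(J)}\mathcal{O}/\mathfrak{d}_{J,i}$, where $\overline{1}$ is the class of $1$ in the final summand $\mathcal{O}/\mathfrak{d}_{J,r(J)}$. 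Its annihilator is exactly $\mathfrak{d}_{J,r(J)}$, giving $\tau(Z) = \mathfrak{d}_{J,r(J)}$.

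Once the identification $\tau(Z) = \Ann_{\mathcal{O}}(Z)$ is in place, both parts are immediate consequences of the structure theorem underlying Lemma \ref{hissu}, so I do not expect a serious obstacle. The only step requiring minor care is unwinding the definitions to establish the identification, in particular making use of $H_Z = H_J$ from Lemma \ref{D(Z)}(\ref{D(Z) 1}) to ensure that the condition $aZ \subseteq \pi(H_Z)$ is equivalent to the coset $aZ$ being zero in $T_J/\pi(H_J)$.
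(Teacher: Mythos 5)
Your proof is correct and takes essentially the same approach as the paper, which simply remarks that the lemma ``follows immediately from Lemma~\ref{hissu}.'' You spell out the implicit steps the paper leaves to the reader: identifying $\tau(Z)$ with the module annihilator $\Ann_{\mathcal{O}}(Z)$ in $T_J/\pi(H_J)$ (using $H_Z = H_J$ from Lemma~\ref{D(Z)}), then reading off (i) from the divisibility chain $\mathfrak{d}_{J,i}\mid\mathfrak{d}_{J,r(J)}$ and (ii) from the generator of the last cyclic summand.
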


\begin{proof}
It follows immediately from Lemma \ref{hissu}. 
\end{proof}

For $ r \in \{0, \dots, \ell-1 \} $ we define 
\begin{align*}
L^r (\A (K/\mathcal{O})) &\coloneqq \Set{ Z \in L(\A (K/\mathcal{O})) | \dim Z = r } 
\end{align*}
and 
\begin{align*}
d_r (\mathfrak{a}) \coloneqq \sum_{\substack{\varnothing \ne J \subseteq [n] \\ r(J) = \ell - r}} 
(-1)^{|J|} m(J, \mathfrak{a}). 
\end{align*}

By the proof of Theorem \ref{main thm2}, we have 
\begin{align*}
f_{\A}^{\kappa} (t) 
= t^{\ell} + \sum_{r=0}^{\ell-1} d_r (\kappa) t^r 
\quad \text{ and } \quad 
d_r (\kappa) = 
\sum_{\substack{Z \in L^r(\A (K/\mathcal{O})) \\ \tau(Z) \mid \kappa}} \mu (T_{\varnothing}, Z),
\end{align*}
for each $ \kappa \mid \rho_{\mathcal{A}} $.

\begin{lemma}[See also {\cite[Claim 6.3]{higashitani2022period-imrn}}] \label{minimality rho_r}
For each $ r \in \{0, \dots, \ell-1 \} $, 
$d_r (\mathfrak{a}) $ is a quasi-constant (quasi-monomial of degree $ 0 $) with the minimum period 
\begin{align*}
\rho_{r} \coloneqq \lcm \Set{ \tau (Z) | Z \in L^r (\A (K/\mathcal{O})) }. 
\end{align*}
\end{lemma}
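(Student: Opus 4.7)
The plan is to combine two ingredients: the explicit description of $d_r(\mathfrak{a})$ as a Möbius sum obtained in the proof of Theorem \ref{main thm2}, and the strict sign alternation of $\mu$ on $L(\mathcal{A}(K/\mathcal{O}))$ from Lemma \ref{interval}(\ref{interval 3}). First I would note that, since $\tau(Z) \mid \rho_{\mathcal{A}}$ by Lemma \ref{tau lemma 2}, the condition $\tau(Z) \mid \kappa$ for $\kappa = \mathfrak{a}+\rho_{\mathcal{A}}$ is equivalent to $\tau(Z) \mid \mathfrak{a}$; hence for every $\mathfrak{a} \in I(\mathcal{O})$ one has
\begin{align*}
d_r(\mathfrak{a}) = \sum_{\substack{Z \in L^r(\mathcal{A}(K/\mathcal{O})) \\ \tau(Z) \mid \mathfrak{a}}} \mu(T_{\varnothing}, Z),
\end{align*}
which manifestly does not involve $N(\mathfrak{a})$, so $d_r$ is a quasi-constant as soon as it is a quasi-polynomial.

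Next I would verify that $\rho_r$ is a period: if $\mathfrak{a}_1 + \rho_r = \mathfrak{a}_2 + \rho_r$, then for every $Z \in L^r(\mathcal{A}(K/\mathcal{O}))$ the divisor $\tau(Z)$ divides $\rho_r$ by definition of $\rho_r$, so $\tau(Z) \mid \mathfrak{a}_1 \iff \tau(Z) \mid \mathfrak{a}_2$; thus the summation indices match and $d_r(\mathfrak{a}_1) = d_r(\mathfrak{a}_2)$.

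The substantive step is minimality. I would argue by contradiction: suppose $\rho$ is a period with $\rho_r \nmid \rho$, and fix a prime $\mathfrak{p}$ with $\ord_{\mathfrak{p}}(\rho) < \ord_{\mathfrak{p}}(\rho_r) =: e$. By definition of $\rho_r$ as an lcm, there exists $Z_0 \in L^r(\mathcal{A}(K/\mathcal{O}))$ with $\ord_{\mathfrak{p}}(\tau(Z_0)) = e$. I then set $\mathfrak{a}_1 \coloneqq \rho_r$ and $\mathfrak{a}_2 \coloneqq \rho_r \mathfrak{p}^{-1}$ (well-defined since $\mathfrak{p} \mid \rho_r$). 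Comparing $\mathfrak{p}$-adic and $\mathfrak{q}$-adic valuations prime by prime shows $\mathfrak{a}_1 + \rho = \mathfrak{a}_2 + \rho$, because for $\mathfrak{q} \neq \mathfrak{p}$ the two have the same $\mathfrak{q}$-part, and for $\mathfrak{p}$ both valuations exceed $\ord_{\mathfrak{p}}(\rho)$, so the minima agree. Consequently $d_r(\mathfrak{a}_1) = d_r(\mathfrak{a}_2)$.

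On the other hand, writing $S_i \coloneqq \{Z \in L^r(\mathcal{A}(K/\mathcal{O})) \mid \tau(Z) \mid \mathfrak{a}_i\}$, one has $S_2 \subseteq S_1$ (since $\mathfrak{a}_2 \mid \mathfrak{a}_1$) and $Z_0 \in S_1 \setminus S_2$ (because $\ord_{\mathfrak{p}}(\tau(Z_0)) = e > e-1 = \ord_{\mathfrak{p}}(\mathfrak{a}_2)$). Therefore
\begin{align*}
d_r(\mathfrak{a}_1) - d_r(\mathfrak{a}_2) = \sum_{Z \in S_1 \setminus S_2} \mu(T_{\varnothing}, Z).
\end{align*}
By Lemma \ref{interval}(\ref{interval 3}), every $\mu(T_{\varnothing}, Z)$ with $Z \in L^r$ is nonzero and has the common sign $(-1)^{\ell - r}$, so the sum is nonzero, contradicting $d_r(\mathfrak{a}_1) = d_r(\mathfrak{a}_2)$. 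Hence $\rho_r \mid \rho$ for every period $\rho$, and combined with the first step this identifies $\rho_r$ as the minimum period. The main obstacle is just the bookkeeping in the valuation-by-valuation check that $\mathfrak{a}_1 + \rho = \mathfrak{a}_2 + \rho$; the sign-alternation argument then provides the non-cancellation that guarantees the two values of $d_r$ actually differ.
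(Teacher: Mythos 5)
Your argument is substantively the same as the paper's: both hinge on the strict sign alternation of the M\"{o}bius function from Lemma~\ref{interval}(\ref{interval 3}), both express $d_r(\mathfrak{a})$ as a M\"{o}bius sum over $\{Z \in L^r(\mathcal{A}(K/\mathcal{O})) : \tau(Z)\mid\mathfrak{a}\}$, and both identify $\rho_r$ as a period by observing that the summation index depends on $\mathfrak{a}$ only through which $\tau(Z)$ divide $\mathfrak{a}$. You organize the minimality step as a contradiction isolated at one prime $\mathfrak{p}$ and one witness layer $Z_0$, whereas the paper compares the total sums over $L^r$ at $\rho_r$ and at the minimum period; these are essentially the same argument in different packaging.

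There is, however, one genuine logical gap worth flagging. After establishing $\mathfrak{a}_1 + \rho = \mathfrak{a}_2 + \rho =: \kappa$, you conclude ``Consequently $d_r(\mathfrak{a}_1) = d_r(\mathfrak{a}_2)$.'' But Definition~\ref{quasi-polynomial with GCD-property} only guarantees that membership in the same $\rho$-class makes both $d_r(\mathfrak{a}_i)$ equal to a \emph{common polynomial} $g^\kappa$ evaluated at $N(\mathfrak{a}_i)$; since $N(\mathfrak{a}_1) \neq N(\mathfrak{a}_2)$, this does not force the two values to coincide unless $g^\kappa$ is constant. Your earlier remark that ``$d_r$ is a quasi-constant as soon as it is a quasi-polynomial'' gestures at the right intuition but is not itself a proof: for an arbitrary period $\rho$, nothing in the definition forbids a nonconstant constituent, even for a bounded function. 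The fix is available from your own setup: because $\ord_\mathfrak{p}(\kappa) = \ord_\mathfrak{p}(\rho)$, the entire family $\mathfrak{a}_1 \mathfrak{p}^m$ ($m \ge 0$) lies in the $\kappa$-class of $\rho$, has unbounded norm, and satisfies $d_r(\mathfrak{a}_1 \mathfrak{p}^m) = d_r(\rho_r)$ for every $m$ (since every $\tau(Z)$ already divides $\rho_r = \mathfrak{a}_1$). This forces $g^\kappa$ to be the constant polynomial $d_r(\rho_r)$, and then $d_r(\mathfrak{a}_2) = g^\kappa(N(\mathfrak{a}_2)) = d_r(\mathfrak{a}_1)$ follows, after which your non-cancellation argument closes the proof. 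This is the same ``infinitely many ideals with distinct norms'' technique the paper itself uses in the final paragraph of the proof of Lemma~\ref{card of ker}; supply it explicitly and the proposal is complete.
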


\begin{proof}
We first prove that $\rho_{r}$ is a period of $d_r (\mathfrak{a}) $. 
By Lemma \ref{card of ker}, 
$m(J, \mathfrak{a})$ has the minimum period 
$\mathfrak{d}_{J, r(J)}$. 
Hence $d_r (\mathfrak{a}) $ has a period 
\begin{align*}
\lcm \Set{ \mathfrak{d}_{J, r(J)}  | \varnothing \ne J \subseteq [n], \ r(J) = \ell - r }. 
\end{align*}
By Lemma \ref{tau lemma 2}, this period equals $ \lcm \Set{ \tau(Z) | Z \in L^r (\A (K/\mathcal{O})) } = \rho_{r} $. 

Next we show the minimality of $\rho_{r}$. 
Let $\rho$ be the minimum period of $d_r (\mathfrak{a}) $. 
Then $\rho \mid \rho_{r}$ and $d_r (\rho) = d_r (\rho_{r})$. 
By the definition of $\rho_{r}$, we have $\tau (Z) \mid \rho_{r}$ for any $Z \in L^r (\A (K/\mathcal{O}))$. 
Hence we obtain 
\begin{align*}
\sum_{Z \in L^r (\A (K/\mathcal{O}))} (-1)^{\ell - r} \mu (T_{\varnothing}, Z)
&= \sum_{\substack{Z \in L^r(\A (K/\mathcal{O})) \\ \tau(Z) \mid \rho_{r}}} (-1)^{\ell - r} \mu (T_{\varnothing}, Z)  \\
&= (-1)^{\ell - r} d_r (\rho_{r}) 
= (-1)^{\ell - r} d_r (\rho) \\
&= \sum_{\substack{Z \in L^r (\A (K/\mathcal{O})) \\ \tau(Z) \mid \rho}} (-1)^{\ell - r} \mu (T_{\varnothing}, Z). 
\end{align*}
Since $(-1)^{\ell - r} \mu (T_{\varnothing}, Z) > 0$ by Lemma \ref{interval} (\ref{interval 3}), %\textcolor{red}{cite}, 
we have 
\begin{align*}
L^r (\A (K/\mathcal{O})) = \Set{Z \in L^r (\A (K/\mathcal{O})) \ | \ \tau(Z) \mid \rho}. 
\end{align*}
Therefore $\rho_{r} \mid \rho$ and hence $\rho_{r} = \rho$. 
\end{proof}

\begin{theorem}[Generalization of Theorem \ref{HTY21}] \label{main thm3}
The LCM-period $\rho_{\mathcal{A}}$ is the minimum period of the characteristic quasi-polynomial $\chi_{\A}^{\mathrm{quasi}}$. 
\end{theorem}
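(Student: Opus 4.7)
The plan is to reduce Theorem \ref{main thm3} to the minimality statements already established for the quasi-constants $d_r$ in Lemma \ref{minimality rho_r}. As a preliminary identity, I would first observe that
\[
\rho_{\mathcal{A}} \;=\; \lcm\bigl\{\rho_r : 0 \le r \le \ell - 1\bigr\}.
\]
This follows from Lemma \ref{tau lemma 2} and the fact that every layer $Z$ in $L(\A(K/\mathcal{O}))\setminus\{T_{\varnothing}\}$ belongs to $T_J/\pi(H_J)$ for some $\varnothing \ne J \subseteq [n]$ with $\dim Z = \ell - r(J)$, so that the multiset $\{\mathfrak{d}_{J,r(J)} : \varnothing \ne J\}$ is captured layer-by-layer via $\tau$ and grouped by $r = \ell - r(J)$. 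Granting this identity, the theorem follows at once if we establish the following claim: every period $\rho$ of $\chi^{\mathrm{quasi}}_{\A}$ is simultaneously a period of each quasi-constant $d_r$. Indeed, Lemma \ref{minimality rho_r} then gives $\rho_r \mid \rho$ for every $r$, hence $\rho_{\mathcal{A}} = \lcm_r \rho_r \mid \rho$, and combining with Theorem \ref{main thm} we conclude that $\rho_{\mathcal{A}}$ is the minimum period.

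For the claim I may assume, by replacing $\rho$ with $\rho + \rho_{\mathcal{A}}$ via Proposition \ref{minimum period}(\ref{minimum period 1}), that $\rho \mid \rho_{\mathcal{A}}$. Writing the $\kappa$-constituent of $\chi^{\mathrm{quasi}}_{\A}$ associated with $\rho$ as $f^{\kappa}(t) = t^{\ell} + \sum_{r=0}^{\ell-1} e_r^{\kappa}\, t^r$ and comparing
\[
f^{\mathfrak{a}+\rho}\bigl(N(\mathfrak{a})\bigr) \;=\; \chi^{\mathrm{quasi}}_{\A}(\mathfrak{a}) \;=\; N(\mathfrak{a})^{\ell} + \sum_{r=0}^{\ell-1} d_r(\mathfrak{a})\, N(\mathfrak{a})^r
\]
yields the key identity $\sum_{r=0}^{\ell-1}(d_r(\mathfrak{a}) - e_r^{\mathfrak{a}+\rho})N(\mathfrak{a})^{r} = 0$ for every $\mathfrak{a} \in I(\mathcal{O})$. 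Since $d_r$ already admits $\rho_{\mathcal{A}}$ as a period, $d_r(\mathfrak{a})$ depends on $\mathfrak{a}$ only through $\kappa' \coloneqq \mathfrak{a} + \rho_{\mathcal{A}}$, so the claim reduces to proving $d_r(\kappa') = e_r^{\kappa}$ for every pair of ideals $\kappa \mid \rho$ and $\kappa' \mid \rho_{\mathcal{A}}$ with $\kappa' + \rho = \kappa$.

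To extract this equality I would take $\mathfrak{a} = \kappa'\mathfrak{q}$, where $\mathfrak{q}$ ranges over prime ideals of $\mathcal{O}$ coprime to $\rho_{\mathcal{A}}$. A short prime-by-prime verification confirms that each such $\mathfrak{a}$ satisfies both $\mathfrak{a}+\rho_{\mathcal{A}} = \kappa'$ and $\mathfrak{a}+\rho = \kappa$, and the norm $N(\mathfrak{a}) = N(\kappa')N(\mathfrak{q})$ attains infinitely many distinct values because a residually finite Dedekind domain with infinitely many prime ideals admits primes of arbitrarily large norm coprime to any fixed ideal. The identity displayed above then becomes a polynomial in $N(\mathfrak{a})$ of degree less than $\ell$ with infinitely many roots, forcing every coefficient $d_r(\kappa') - e_r^{\kappa}$ to vanish and completing the proof. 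The main technical obstacle is the edge case where $\mathcal{O}$ has only finitely many nonzero prime ideals, in which case $\mathcal{O}$ is a semilocal Dedekind domain (hence a principal ideal domain) and there may be no primes $\mathfrak{q}$ coprime to $\rho_{\mathcal{A}}$; I would treat that case separately, either by a direct examination of the constituents on the finite list of divisors of $\rho_{\mathcal{A}}$ or by passing to a Dedekind domain having $\mathcal{O}$ as a localization and invoking the localization results of Section \ref{localizations of the base rings}.
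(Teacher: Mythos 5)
Your strategy is exactly the paper's route: reduce minimality of $\rho_{\mathcal{A}}$ to the statement that every period of $\chi^{\mathrm{quasi}}_{\A}$ is simultaneously a period of each quasi-constant $d_r$, then invoke Lemma \ref{minimality rho_r} together with the identity $\rho_{\mathcal{A}} = \lcm\{\rho_r : 0 \le r \le \ell-1\}$. The paper compresses the crucial implication into a single ``Hence'', so the polynomial-identity argument you supply (comparing constituents on ideals $\kappa'\mathfrak{q}$ with $\mathfrak{q}$ coprime to $\rho_{\mathcal{A}}$) is a genuinely useful piece of detail. It is correct whenever $\mathcal{O}$ possesses at least one prime not dividing $\rho_{\mathcal{A}}$; note that a single such prime $\mathfrak{q}$ with its powers $\mathfrak{q}^m$ already produces infinitely many distinct norms, so you do not need primes of unbounded norm or the uniform finiteness-of-norms assertion you invoke.

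The degenerate case you flag, where every prime of $\mathcal{O}$ divides $\rho_{\mathcal{A}}$ (forcing $\mathcal{O}$ semilocal), is a real gap that your sketch does not close, and the paper's own proof is silent on it as well. Your proposed remedy goes the wrong way: $\mathbb{Z}_p$, for instance, is not a localization of a residually finite Dedekind domain with infinitely many primes, and even when $\mathcal{O}=\mathcal{O}'_S$, the arrangement $\A$ sits in $\mathcal{O}^\ell$ rather than in $\mathcal{O}'^\ell$, so clearing denominators would alter the invariants $\mathfrak{d}_{J,i}$. A more usable application of Section \ref{localizations of the base rings} is in the opposite direction: localize at each prime $\mathfrak{p}$ of $\mathcal{O}$; one checks that a period $\rho$ of $\chi^{\mathrm{quasi}}_{\A}$ induces the period $\rho\mathcal{O}_{\mathfrak{p}}$ of $\chi^{\mathrm{quasi}}_{\A_S}$, and Theorem \ref{localization}(\ref{localization 1}) transports the LCM-period, so the whole theorem reduces to the case of a discrete valuation ring. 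But the DVR case is precisely where the polynomial-identity argument breaks: with $\rho_{\mathcal{A}}=\mathfrak{p}^M$, for each $m<M$ there is a unique ideal with $\mathfrak{a}+\rho_{\mathcal{A}}=\mathfrak{p}^m$, so a putative period $\mathfrak{p}^b$ with $b<M$ yields only the single relation
\begin{align*}
\sum_{r=0}^{\ell-1}\bigl(d_r(\mathfrak{p}^m)-d_r(\mathfrak{p}^M)\bigr)N(\mathfrak{p})^{mr}=0
\end{align*}
for each $b\le m<M$, from which one cannot conclude that the individual coefficients vanish. Some further input, perhaps exploiting the sign conditions of Lemma \ref{interval}(\ref{interval 3}) as is done inside Lemma \ref{minimality rho_r}, is still needed to rule out this accidental cancellation.
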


\begin{proof}
The characteristic quasi-polynomial 
$\chi_{\A}^{\mathrm{quasi}}$ is given by 
\begin{align*}
\chi_{\A}^{\mathrm{quasi}}(\mathfrak{a})
= N(\mathfrak{a})^{\ell} 
+ \sum_{r = 0}^{\ell - 1} d_r (\mathfrak{a}) N (\mathfrak{a})^{r}. 
\end{align*}
By Lemma \ref{minimality rho_r}, each coefficient $d_r (\mathfrak{a})$ has the minimum period $\rho_{r}$. 
Hence the characteristic quasi-polynomial 
$\chi_{\A}^{\mathrm{quasi}}$ 
has the minimum period 
$
\lcm \Set{ \rho_{r} | 0 \leq r \leq \ell - 1 }
$. 
By Lemma \ref{tau lemma 2}, we obtain 
\begin{align*}
\rho_\mathcal{A} 
&= \lcm \Set{ \mathfrak{d}_{J, r(J)}  | \varnothing \ne J \subseteq [n] }
= \lcm \Set{ \tau (Z) | Z \in L(\A (K/\mathcal{O})) \setminus \left\{ T_{\varnothing} \right\} } \\
&= \lcm \Set{ \rho_{r} | 0 \leq r \leq \ell - 1 }. 
\end{align*}
\end{proof}

\section{Localizations of the base rings}\label{localizations of the base rings}

Let $S$ be a multiplicative subset of $\mathcal{O}$ and $ \mathcal{O}_{S} $ denotes the localization of $ \mathcal{O} $ with respect to $ S $. 
Since $ \mathcal{O} $ is a Dedekind domain, so is $ \mathcal{O}_{S} $. 

Let $ I_{S}(\mathcal{O}) \coloneqq \Set{\mathfrak{A} \cap \mathcal{O} | \mathfrak{A} \in I(\mathcal{O}_{S})} $, where $ I(\mathcal{O}_{S}) $ denotes the set of nonzero ideals of $ \mathcal{O}_{S} $. 
By standard ring theory, we have a one-to-one correspondence between ideals in $ I_{S}(\mathcal{O}) $ and $ I(\mathcal{O}_{S}) $ as follows: 
\begin{align*}
\begin{array}{rcl}
I_{S}(\mathcal{O}) & \longleftrightarrow & I(\mathcal{O}) \\
\mathfrak{a} & \longmapsto & \mathfrak{a}\mathcal{O}_{S} \\
\mathfrak{A} \cap \mathcal{O} & \longmapsfrom & \mathfrak{A}. 
\end{array}
\end{align*}

Note that this correspondence preserves prime ideals and 
\begin{align*}
I_{S}(\mathcal{O}) \cap \Spec(\mathcal{O}) = \Set{\mathfrak{p} \in I(\mathcal{O}) \cap \Spec(\mathcal{O}) | \mathfrak{p} \cap S = \varnothing}, 
\end{align*}
where $ \Spec(\mathcal{O}) $ denotes the set of prime ideals of $ \mathcal{O} $. 

\begin{lemma} \label{Na = NaOS}
The following statements hold: 
\begin{enumerate}[(i)]
\item \label{Na = NaOS 1}
For any $s \in S$ and $\mathfrak{a} \in I_S (\mathcal{O})$, $ \mathfrak{a} + \langle s \rangle = \langle 1 \rangle $. 

\item \label{Na = NaOS 2}
For any ideal $\mathfrak{a} \in I_S (\mathcal{O})$, 
we have $\mathcal{O}_S / \mathfrak{a}\mathcal{O}_S \simeq \mathcal{O} / \mathfrak{a} $. 
In particular, $N (\mathfrak{a} \mathcal{O}_S) = N (\mathfrak{a})$. 
Moreover, $ \mathcal{O}_{S}/\mathfrak{A} \simeq \mathcal{O}/(\mathfrak{A} \cap \mathcal{O}) $ and $ N(\mathfrak{A}) = N(\mathfrak{A} \cap \mathcal{O}) $ for each $ \mathfrak{A} \in I(\mathcal{O}_{S}) $. 

\item \label{Na = NaOS 3}
$ \mathcal{O}_{S} $ is residually finite. 
\end{enumerate}
\end{lemma}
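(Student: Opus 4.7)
The three parts are essentially a standard exercise in ideal-theoretic localization for Dedekind domains, so the proposal is mostly to assemble known facts in the right order and check the coprimality condition that makes the localization trivial modulo $\mathfrak{a}$.

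For (\ref{Na = NaOS 1}), the plan is to use unique factorization of ideals together with the description $I_S(\mathcal{O}) \cap \Spec(\mathcal{O}) = \{\mathfrak{p} : \mathfrak{p} \cap S = \varnothing\}$ recalled just before the lemma. Given $\mathfrak{a} \in I_S(\mathcal{O})$, write $\mathfrak{a} = \mathfrak{p}_1^{a_1}\cdots\mathfrak{p}_t^{a_t}$; every prime factor $\mathfrak{p}_i$ lies in $I_S(\mathcal{O})$ because the correspondence $\mathfrak{A} \leftrightarrow \mathfrak{A}\cap \mathcal{O}$ preserves primes, hence $\mathfrak{p}_i \cap S = \varnothing$ and $s \notin \mathfrak{p}_i$ for every $s \in S$. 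Therefore $\mathfrak{p}_i + \langle s\rangle = \langle 1\rangle$ for each $i$, and taking products (equivalently, applying that in a Dedekind domain pairwise coprimality implies coprimality with the product) yields $\mathfrak{a} + \langle s\rangle = \langle 1 \rangle$.

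For (\ref{Na = NaOS 2}), the plan is to use (\ref{Na = NaOS 1}) to show that the composite $\mathcal{O} \hookrightarrow \mathcal{O}_S \twoheadrightarrow \mathcal{O}_S/\mathfrak{a}\mathcal{O}_S$ descends to an isomorphism $\mathcal{O}/\mathfrak{a} \to \mathcal{O}_S/\mathfrak{a}\mathcal{O}_S$. First I would check that every $s \in S$ is a unit modulo $\mathfrak{a}$: by (\ref{Na = NaOS 1}) there exist $a \in \mathfrak{a}$ and $t \in \mathcal{O}$ with $a + ts = 1$, so $ts \equiv 1 \pmod{\mathfrak{a}}$. Hence the universal property of the localization provides an inverse map $\mathcal{O}_S \to \mathcal{O}/\mathfrak{a}$ vanishing on $\mathfrak{a}\mathcal{O}_S$, which supplies the inverse isomorphism. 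For the moreover part, given $\mathfrak{A} \in I(\mathcal{O}_S)$ set $\mathfrak{a} \coloneqq \mathfrak{A}\cap \mathcal{O}$; the bijection of the preceding discussion yields $\mathfrak{a}\mathcal{O}_S = \mathfrak{A}$, so applying the already proved isomorphism gives $\mathcal{O}_S/\mathfrak{A} = \mathcal{O}_S/\mathfrak{a}\mathcal{O}_S \simeq \mathcal{O}/\mathfrak{a}$, and taking cardinalities yields $N(\mathfrak{A}) = N(\mathfrak{a})$.

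For (\ref{Na = NaOS 3}), the plan is to reduce finiteness of $\mathcal{O}_S/\mathfrak{A}$ to finiteness of $\mathcal{O}/(\mathfrak{A}\cap \mathcal{O})$ via (\ref{Na = NaOS 2}), and then invoke the residual finiteness of $\mathcal{O}$. The only point requiring verification is that $\mathfrak{A}\cap \mathcal{O}$ is a \emph{nonzero} ideal of $\mathcal{O}$ whenever $\mathfrak{A}$ is nonzero: picking $0 \neq x \in \mathfrak{A}$ and writing $x = a/s$ with $a \in \mathcal{O}$ and $s \in S$, one has $a = sx \in \mathfrak{A} \cap \mathcal{O} \setminus \{0\}$.

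I do not expect any genuine obstacle; the only step that deserves care is the coprimality statement (\ref{Na = NaOS 1}), since one needs to observe that every prime factor of $\mathfrak{a}$ still lies in $I_S(\mathcal{O})$, which requires the preservation of prime ideals under the order-preserving bijection $\mathfrak{A} \leftrightarrow \mathfrak{A}\cap\mathcal{O}$. Once (\ref{Na = NaOS 1}) is in hand, the remaining parts are formal.
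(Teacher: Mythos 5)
Your proposal is correct and follows essentially the same route as the paper: for (\ref{Na = NaOS 1}) both reduce to the observation that the prime factors of $\mathfrak{a}$ avoid $S$ (the paper obtains this by factoring $\mathfrak{a}\mathcal{O}_S$ in $\mathcal{O}_S$ and pulling the prime factors back, which is the cleaner way to make your phrase ``because the correspondence preserves primes'' precise); for (\ref{Na = NaOS 2}) you invoke the universal property of localization where the paper writes $\mathcal{O}_S/\mathfrak{a}\mathcal{O}_S \simeq (\mathcal{O}/\mathfrak{a}) \otimes_{\mathcal{O}} \mathcal{O}_S \simeq \mathcal{O}/\mathfrak{a}$, which is the same idea; and (\ref{Na = NaOS 3}) is identical, with your added check that $\mathfrak{A} \cap \mathcal{O} \neq 0$ being correct and worth spelling out.
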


\begin{proof}
(\ref{Na = NaOS 1}) 
Since $ \mathcal{O}_{S} $ is a Dedekind domain, the ideal $ \mathfrak{a}\mathcal{O}_{S} $ is decomposed into a product of prime ideals as $ \mathfrak{a}\mathcal{O}_{S} = (\mathfrak{p}_{1}\mathcal{O}_{S}) \cdots (\mathfrak{p}_{r}\mathcal{O}_{S}) $, where $ \mathfrak{p}_{i} \in I_{S}(\mathcal{O}) \cap \Spec(\mathcal{O}) $. 
Then $ \mathfrak{a}\mathcal{O}_{S} = (\mathfrak{p}_{1}\mathcal{O}_{S}) \cdots (\mathfrak{p}_{r}\mathcal{O}_{S}) = (\mathfrak{p}_{1} \cdots \mathfrak{p}_{r})\mathcal{O}_{S} $ and hence $ \mathfrak{a} = \mathfrak{p}_{1} \cdots \mathfrak{p}_{r} $. 
Since $ \mathcal{O} $ is a Dedekind domain and $ \mathfrak{p}_{i} $ is a prime ideal, $ \mathfrak{p}_{i} + \langle s \rangle $ equals $ \mathfrak{p}_{i} $ or $ \langle 1 \rangle $. 
If $ \mathfrak{p}_{i} + \langle s \rangle = \mathfrak{p}_{i} $, then $ s \in \mathfrak{p}_{i} $, which contradicts to $ \mathfrak{p}_{i} \cap S = \varnothing $. 
Therefore $ \mathfrak{p}_{i} + \langle s \rangle = \langle 1 \rangle $. 
Thus $ \mathfrak{a} + \langle s \rangle = \langle 1 \rangle $. 

(\ref{Na = NaOS 2})
By (\ref{Na = NaOS 1}), every element of $ S $ is a unit of $ \mathcal{O}/\mathfrak{a} $. 
Therefore $ \mathcal{O}_{S}/\mathfrak{a}\mathcal{O}_{S} \simeq \left(\mathcal{O}/\mathfrak{a}\right) \otimes_{\mathcal{O}} \mathcal{O}_{S} \simeq \mathcal{O}/\mathfrak{a} $. 

(\ref{Na = NaOS 3})
This follows immediately from (\ref{Na = NaOS 2}). 
\end{proof}

Let $ \mathcal{A}_{S} $ denote the finite set $ \mathcal{A} $ considered as a subset of $ \mathcal{O}_{S} $ via the natural inclusion $ \mathcal{O} \subseteq \mathcal{O}_{S} $. 
Since $ \mathcal{O}_{S} $ is a residually finite Dedekind domain by Lemma \ref{Na = NaOS}(\ref{Na = NaOS 3}), we can consider the characteristic quasi-polynomial and the torsion arrangement of $ \mathcal{A}_{S} $. 

\subsection{The characteristic quasi-polynomial of $\A_{S}$}

The characteristic quasi-polynomial $ \chi_{\mathcal{A}_{S}}^{\mathrm{quasi}} $ of $ \mathcal{A}_{S} $ is a quasi-polynomial on $ I(\mathcal{O}_{S}) $ defined by $\chi^{\mathrm{quasi}}_{\A_{S}} (\mathfrak{A}) = \left| M (\A_{S} (\mathcal{O}_S / \mathfrak{A})) \right|$. 
The following proposition shows that $ \chi_{\mathcal{A}_{S}}^{\mathrm{quasi}} $ is equivalent to the restriction of $ \chi_{\mathcal{A}}^{\mathrm{quasi}} $ to $ I_{S}(\mathcal{O}) $ via the correspondence described above.

\begin{theorem} \label{localization}
The following statements hold: 
\begin{enumerate}[(i)]
\item \label{localization 1} The LCM-period of $ \mathcal{A}_{S} $ is $ \rho_{\mathcal{A}_{S}} = \rho_{\mathcal{A}}\mathcal{O}_{S} $. 
\item \label{localization 2} For any $\mathfrak{A} \in I (\mathcal{O}_S)$, $\chi^{\mathrm{quasi}}_{\A_{S}} (\mathfrak{A}) = \chi^{\mathrm{quasi}}_{\A} (\mathfrak{A} \cap \mathcal{O})$. 
\item \label{localization 3} For any $\delta \mid \rho_{\mathcal{A}_{S}}$, $ f_{\mathcal{A}_{S}}^{\delta}(t) = f_{\mathcal{A}}^{\delta \cap \mathcal{O}}(t) $. 
\end{enumerate}
\end{theorem}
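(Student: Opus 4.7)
The strategy is to bootstrap everything from the compatibility of Lemma \ref{structure of coker phi_J} with the localization functor $-\otimes_{\mathcal{O}} \mathcal{O}_S$ together with the norm preservation from Lemma \ref{Na = NaOS}(\ref{Na = NaOS 2}). The key base observation is that the $\mathcal{O}_{S}$-homomorphism $\phi^{S}_{J} \colon \mathcal{O}_{S}^{\ell} \to \mathcal{O}_{S}^{|J|}$ associated with $\mathcal{A}_{S}$ coincides with $\phi_{J}\otimes_{\mathcal{O}} \mathcal{O}_{S}$. Tensoring the decomposition of Lemma \ref{structure of coker phi_J} by the flat $\mathcal{O}$-module $\mathcal{O}_{S}$ gives
\begin{align*}
\coker \phi^{S}_{J} \simeq \bigoplus_{i=1}^{r(J)} \mathcal{O}_{S}/\mathfrak{d}_{J,i}\mathcal{O}_{S} \oplus \mathfrak{d}_{J}\mathcal{O}_{S} \oplus \mathcal{O}_{S}^{|J|-r(J)-1}.
\end{align*}
Since rank is preserved under localization of finitely generated modules, $r_{S}(J)=r(J)$, and the invariant ideals of $\phi^{S}_{J}$ are $\mathfrak{d}_{J,i}\mathcal{O}_{S}$ (those $\mathfrak{d}_{J,i}$ meeting $S$ extend to the unit ideal and drop out, but the divisibility chain $\mathfrak{d}_{J,i}\mid \mathfrak{d}_{J,i+1}$ passes to $\mathcal{O}_{S}$, so $\mathfrak{d}_{J,r(J)}\mathcal{O}_{S}$ remains the last factor). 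For (\ref{localization 1}), the correspondence $\mathfrak{a}\mapsto \mathfrak{a}\mathcal{O}_{S}$ is a multiplicative bijection between $I_{S}(\mathcal{O})$ and $I(\mathcal{O}_{S})$, so it commutes with $\lcm$, yielding $\rho_{\mathcal{A}_{S}} = \lcm\{\mathfrak{d}_{J,r(J)}\mathcal{O}_{S}\} = \rho_{\mathcal{A}}\mathcal{O}_{S}$.

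For (\ref{localization 2}) I would use the $\mathcal{O}$-linear ring isomorphism $\mathcal{O}_{S}/\mathfrak{A} \simeq \mathcal{O}/(\mathfrak{A}\cap\mathcal{O})$ of Lemma \ref{Na = NaOS}(\ref{Na = NaOS 2}) to identify the ambient modules $(\mathcal{O}_{S}/\mathfrak{A})^{\ell}$ and $(\mathcal{O}/(\mathfrak{A}\cap\mathcal{O}))^{\ell}$. Under this identification, $H_{j,\mathfrak{A}}$ corresponds exactly to $H_{j,\mathfrak{A}\cap\mathcal{O}}$: given $\boldsymbol{x}\in\mathcal{O}_{S}^{\ell}$ with $\boldsymbol{x}c_{j}\in\mathfrak{A}$, write $\boldsymbol{x}=\boldsymbol{y}/s$ with $\boldsymbol{y}\in\mathcal{O}^{\ell}$ and $s\in S$; then $\boldsymbol{y}c_{j}\in s\mathfrak{A}\subseteq\mathfrak{A}$, hence $\boldsymbol{y}c_{j}\in\mathfrak{A}\cap\mathcal{O}$, and the converse is immediate. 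The entire $\mathfrak{A}$-reduced arrangement of $\mathcal{A}_{S}$ therefore corresponds to the $(\mathfrak{A}\cap\mathcal{O})$-reduced arrangement of $\mathcal{A}$, and in particular the complements are in bijection.

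For (\ref{localization 3}), given $\delta\mid\rho_{\mathcal{A}_{S}}$, set $\delta_{0}\coloneqq \delta\cap\mathcal{O}$, which divides $\rho_{\mathcal{A}}$ by (\ref{localization 1}) and the ideal correspondence. Using $\delta=\delta_{0}\mathcal{O}_{S}$ we compute $\delta + \mathfrak{d}_{J,i}\mathcal{O}_{S} = (\delta_{0}+\mathfrak{d}_{J,i})\mathcal{O}_{S}$, whence Lemma \ref{Na = NaOS}(\ref{Na = NaOS 2}) gives $N(\delta+\mathfrak{d}_{J,i}\mathcal{O}_{S})=N(\delta_{0}+\mathfrak{d}_{J,i})$ for every $i$. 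Plugging into the constituent formula of Theorem \ref{main thm} and using $r_{S}(J)=r(J)$, we obtain $m(J,\delta)=m(J,\delta_{0})$ for every $J\subseteq[n]$, and therefore $f^{\delta}_{\mathcal{A}_{S}}(t)=f^{\delta_{0}}_{\mathcal{A}}(t)$.

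The main technical point—and essentially the only one—is the verification in the first paragraph that the Steinitz decomposition of Lemma \ref{structure of coker phi_J} is functorial under $-\otimes_{\mathcal{O}} \mathcal{O}_{S}$, including the subtle matter that some invariants may become trivial but the relevant one $\mathfrak{d}_{J,r(J)}$ retains its role as the largest invariant factor. Once this is settled, (\ref{localization 1}), (\ref{localization 2}), and (\ref{localization 3}) all follow by direct manipulation; indeed, (\ref{localization 3}) can also be obtained by feeding (\ref{localization 2}) into the defining evaluation $f^{\delta}_{\mathcal{A}_{S}}(N(\mathfrak{A}))=\chi^{\mathrm{quasi}}_{\mathcal{A}_{S}}(\mathfrak{A})$ for $\mathfrak{A}+\rho_{\mathcal{A}_{S}}=\delta$ and using the identity $(\mathfrak{A}\cap\mathcal{O})+\rho_{\mathcal{A}}=\delta_{0}$ to rewrite the right-hand side in terms of the $\delta_{0}$-constituent.
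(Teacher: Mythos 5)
Your proposal is correct and follows the same overall architecture as the paper's proof: part (i) by tensoring the decomposition of Lemma \ref{structure of coker phi_J} with the flat module $\mathcal{O}_S$ and observing that extension of ideals commutes with $\lcm$; part (ii) by the isomorphism $\mathcal{O}_S/\mathfrak{A}\simeq\mathcal{O}/(\mathfrak{A}\cap\mathcal{O})$ from Lemma \ref{Na = NaOS}(\ref{Na = NaOS 2}); part (iii) from (i) and (ii). The only tactical difference is in (iii): the paper plugs (ii) into the evaluation $f^{\delta}_{\mathcal{A}_S}(N(\mathfrak{A}))=\chi^{\mathrm{quasi}}_{\mathcal{A}_S}(\mathfrak{A})$ and concludes the polynomial identity, whereas your primary argument computes $m(J,\delta)=m(J,\delta\cap\mathcal{O})$ termwise in the constituent formula of Theorem \ref{main thm}; the latter has the small advantage of not needing to know that infinitely many $\mathfrak{A}$ satisfy $\mathfrak{A}+\rho_{\mathcal{A}_S}=\delta$, but both are valid and you also record the paper's route as the alternative.
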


\begin{proof}
(\ref{localization 1})
For each nonempty subset $J \subseteq [n]$, we define $\phi_{J, S} \colon \mathcal{O}_S^{\ell} \to \mathcal{O}_S^{|J|}$ by 
$\boldsymbol{x} \mapsto \boldsymbol{x}  C_J $. 
Then we have $\coker \phi_{J, S} = \left( \coker \phi_J \right) \otimes_{\mathcal{O}} \mathcal{O}_S$. 
Hence by  Lemma \ref{structure of coker phi_J}, we obtain 
\begin{align*}
\coker \phi_{J, S} 
&= \left( \coker \phi_J \right) \otimes_{\mathcal{O}} \mathcal{O}_S 
\simeq 
\left( \bigoplus_{i = 1}^{r(J)} \mathcal{O} / \mathfrak{d}_{J, i} \oplus \mathfrak{d} \oplus \mathcal{O}^{|J| - r(J) -1} \right) 
\otimes_{\mathcal{O}} \mathcal{O}_S \\
&\simeq 
\bigoplus_{i = 1}^{r(J)} \mathcal{O}_S / \mathfrak{d}_{J, i} \mathcal{O}_S \oplus \mathfrak{d}\mathcal{O}_{S} \oplus \mathcal{O}_S^{|J| - r(J)-1}. 
\end{align*}
Therefore $ \rho_{\mathcal{A}_{S}} = \lcm \Set{\mathfrak{d}_{J, r(J)} \mathcal{O}_S | \varnothing \ne J \subseteq [n]} 
= \rho_{\mathcal{A}} \mathcal{O}_S $. 

(\ref{localization 2})
By Lemma \ref{Na = NaOS}(\ref{Na = NaOS 2}), $ \chi_{\mathcal{A}_{S}}^{\mathrm{quasi}}(\mathfrak{A}) = \left| M(\mathcal{A}_{S}(\mathcal{O}_{S}/\mathfrak{A})) \right| = \left| M(\mathcal{A}(\mathcal{O}/(\mathfrak{A} \cap \mathcal{O}))) \right| = \chi_{\mathcal{A}}^{\mathrm{quasi}}(\mathfrak{A} \cap \mathcal{O}) $. 

(\ref{localization 3})
Let $ \mathfrak{A} \in I(\mathcal{O}_{S}) $ be any ideal satisfying $ \mathfrak{A} + \rho_{\mathcal{A}_{S}} = \delta $. 
Then $ (\mathfrak{A} \cap \mathcal{O}) + \rho_{\mathcal{A}} = \delta \cap \mathcal{O} $. 
Therefore 
\begin{align*}
f_{\mathcal{A}_{S}}^{\delta}(N(\mathfrak{A})) 
= \chi_{\mathcal{A}_{S}}^{\mathrm{quasi}}(\mathfrak{A})
= \chi_{\mathcal{A}}^{\mathrm{quasi}}(\mathfrak{A} \cap \mathcal{O})
= f_{\mathcal{A}}^{\delta \cap \mathcal{O}}(N(\mathfrak{A\cap \mathcal{O}}))
= f_{\mathcal{A}}^{\delta \cap \mathcal{O}}(N(\mathfrak{A})). 
\end{align*}
Hence we obtain $ f_{\mathcal{A}_{S}}^{\delta}(t) = f_{\mathcal{A}}^{\delta \cap \mathcal{O}}(t) $. 
\end{proof}

\subsection{The torsion arrangement $\A_{S} (K/ \mathcal{O}_S)$}

The torsion arrangement $\A_{S} (K / \mathcal{O}_S)$ consists of 
\begin{align*}
T_{j, S} \coloneqq \Set{ \pi_S (\boldsymbol{x}) \in (K/\mathcal{O}_S)^{\ell} | \boldsymbol{x} c_j \equiv 0 \pmod{\mathcal{O}_S} }
\qquad (j \in [n]), 
\end{align*} 
where $\pi_S \colon K^{\ell} \to \left( K / \mathcal{O}_S \right)^{\ell}$ is the natural projection. 
For each $\varnothing \ne J \subseteq [n]$, let $T_{J, S} \coloneqq \bigcap_{j \in J} T_{j, S}$ and let 
$T_{\varnothing, S} \coloneqq (K/\mathcal{O}_S)^{\ell} $. 

The posets of layers of $ \mathcal{A}(K/\mathcal{O}) $ and $ \mathcal{A}_{S}(K/\mathcal{O}_{S}) $ are as follows: 
\begin{align*}
L(\mathcal{A}(K/\mathcal{O})) &= \Set{\pi(\boldsymbol{x}) + \pi(H_{J}) | J \subseteq [n], \pi(\boldsymbol{x}) \in T_{J}}, \\
L(\mathcal{A}_{S}(K/\mathcal{O}_{S})) &= \Set{\pi_{S}(\boldsymbol{x}) + \pi_{S}(H_{J}) | J \subseteq [n], \pi_{S}(\boldsymbol{x}) \in T_{J, S}}. 
\end{align*}

Let $ \eta_{S} \colon (K/\mathcal{O})^{\ell} \to (K/\mathcal{O}_{S})^{\ell} $ be the canonical $ \mathcal{O} $-homomorphism.
Since $\eta_{S} \circ \pi = \pi_S$, the $\mathcal{O}$-homomorphism $\eta_{S}$ induces the following order-preserving map: 
\begin{align*}
\begin{array}{rcl}
\eta_{S} \colon  L(\A (K / \mathcal{O})) & \longrightarrow & L(\A (K / \mathcal{O}_S)) \\
Z & \longmapsto & \eta_{S}(Z) = \Set{\eta_{S}(z) | z \in Z}. 
\end{array}
\end{align*}
Note that $\dim Z = \dim \eta_{S} (Z)$ since if $ Z = \pi(\boldsymbol{x}) + \pi(H_{J}) $ then $ \eta_{S}(Z) = \pi_{S}(\boldsymbol{x}) + \pi_{S}(H_{J}) $. 

The map $ \eta_{S} $ induces isomorphisms between torsion subposets as follows: 
\begin{theorem} \label{localization characteristic polynomial}
For any ideal $\delta \mid \rho_{\mathcal{A}_{S}}$, 
the restriction 
\begin{align*}
\eta_{S} \colon  L(\A (K / \mathcal{O}))[\delta \cap \mathcal{O}] \longrightarrow 
L(\A_{S} (K / \mathcal{O}_S))[\delta] 
\end{align*}
is an isomorphism as finite posets. 
In particular, 
\begin{align*}
L(\mathcal{A}_{S}(K/\mathcal{O}_{S})) \simeq L(\mathcal{A}(K/\mathcal{O}))[\rho_{\mathcal{A}}\mathcal{O}_{S} \cap \mathcal{O}]. 
\end{align*}
\end{theorem}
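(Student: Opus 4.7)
The plan is to verify four properties of $\eta_{S}$ restricted to $L(\A(K/\mathcal{O}))[\delta \cap \mathcal{O}]$: that it lands in $L(\A_{S}(K/\mathcal{O}_{S}))[\delta]$, is injective, is surjective, and that it together with its inverse preserves the poset order. Setting $\kappa \coloneqq \delta \cap \mathcal{O}$, so that $\delta = \kappa\mathcal{O}_{S}$ by the correspondence preceding Lemma \ref{Na = NaOS}, well-definedness is immediate: $\eta_{S}$ preserves dimension because it sends the coset $\pi(\boldsymbol{x}) + \pi(H_{Z})$ to $\pi_{S}(\boldsymbol{x}) + \pi_{S}(H_{Z})$ with the same underlying $K$-subspace $H_{Z}$, while $\mathcal{O}$-linearity of $\eta_{S}$ plus the $\mathcal{O}_{S}$-structure of $\pi_{S}(H_{Z})$ promote $\kappa \subseteq \Ann Z$ to $\delta \subseteq \Ann \eta_{S}(Z)$.

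Surjectivity I would handle stratum by stratum. Lemma \ref{hissu} together with the proof of Theorem \ref{localization} furnishes compatible isomorphisms
\[
T_{J}/\pi(H_{J}) \simeq \bigoplus_{i=1}^{r(J)} \mathcal{O}/\mathfrak{d}_{J,i},
\qquad
T_{J,S}/\pi_{S}(H_{J}) \simeq \bigoplus_{i=1}^{r(J)} \mathcal{O}_{S}/\mathfrak{d}_{J,i}\mathcal{O}_{S},
\]
under which $\eta_{S}$ corresponds to the direct sum of natural maps $\mathcal{O}/\mathfrak{d}_{J,i} \to \mathcal{O}_{S}/\mathfrak{d}_{J,i}\mathcal{O}_{S}$. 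By Lemma \ref{ideal quotient gcd}, the respective torsion submodules are $\mathcal{O}/(\kappa + \mathfrak{d}_{J,i})$ and $\mathcal{O}_{S}/(\kappa + \mathfrak{d}_{J,i})\mathcal{O}_{S}$. Every prime factor of $\kappa + \mathfrak{d}_{J,i}$ divides $\kappa \in I_{S}(\mathcal{O})$ and hence avoids $S$, so $\kappa + \mathfrak{d}_{J,i} \in I_{S}(\mathcal{O})$; Lemma \ref{Na = NaOS}(\ref{Na = NaOS 2}) identifies the two torsion pieces via the natural map. Taking direct sums, $\eta_{S}$ restricts to a bijection $(T_{J}/\pi(H_{J}))[\kappa] \to (T_{J,S}/\pi_{S}(H_{J}))[\delta]$ for every $J$, and this yields global surjectivity because every $\delta$-torsion layer lives in some such stratum.

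The crux of the remaining two ingredients is the following lifting lemma: for any $K$-subspace $H \subseteq K^{\ell}$ and any $\kappa \in I_{S}(\mathcal{O})$, an element $\boldsymbol{y} \in K^{\ell}$ with $\boldsymbol{y} \in H + \mathcal{O}_{S}^{\ell}$ and $\kappa\boldsymbol{y} \subseteq H + \mathcal{O}^{\ell}$ must lie in $H + \mathcal{O}^{\ell}$. To prove it I examine the class $[\boldsymbol{y}]$ in $N \coloneqq (H + \mathcal{O}_{S}^{\ell})/(H + \mathcal{O}^{\ell})$: writing $\boldsymbol{y} = \boldsymbol{h} + \boldsymbol{v}$ with $\boldsymbol{v} \in \mathcal{O}_{S}^{\ell}$, some $s \in S$ clears denominators in $\boldsymbol{v}$ and thus annihilates $[\boldsymbol{y}]$; the class is simultaneously annihilated by $\kappa$, and $\kappa + \langle s \rangle = \langle 1 \rangle$ by the argument used in Lemma \ref{Na = NaOS}(\ref{Na = NaOS 1}), so $[\boldsymbol{y}] = 0$. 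For injectivity, given $\eta_{S}(Z_{1}) = \eta_{S}(Z_{2})$ the uniqueness part of Lemma \ref{intersection} applied inside $L(\A_{S}(K/\mathcal{O}_{S}))$ (using $\A_{S}(K) = \A(K)$) forces $H_{Z_{1}} = H_{Z_{2}} =: H$, and writing $Z_{i} = \pi(\boldsymbol{x}_{i}) + \pi(H)$ both hypotheses of the lifting lemma hold for $\boldsymbol{y} = \boldsymbol{x}_{1} - \boldsymbol{x}_{2}$ (the $\mathcal{O}_{S}$-containment from equality of $\eta_{S}$-images, the $\kappa$-containment from $\kappa$-torsion), so $Z_{1} = Z_{2}$.

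Finally, for order-preservation of the inverse, suppose $\eta_{S}(Z_{1}) \supseteq \eta_{S}(Z_{2})$. Comparing underlying linear pieces via the $\pi_{S}$-analog of Lemma \ref{locally dvr} (whose proof goes through verbatim after replacing $\mathcal{O}$ by $\mathcal{O}_{S}$ and choosing a prime of $\mathcal{O}$ disjoint from $S$; the case $\mathcal{O}_{S} = K$ is vacuous) yields $H_{Z_{2}} \subseteq H_{Z_{1}}$. One then reads off $\boldsymbol{x}_{2} - \boldsymbol{x}_{1} \in H_{Z_{1}} + \mathcal{O}_{S}^{\ell}$ from the coset containment, while $\kappa(\boldsymbol{x}_{2} - \boldsymbol{x}_{1}) \subseteq H_{Z_{1}} + \mathcal{O}^{\ell}$ follows from $\kappa$-torsion of both $Z_{i}$ combined with $H_{Z_{2}} \subseteq H_{Z_{1}}$; the lifting lemma then places $\pi(\boldsymbol{x}_{2}) \in Z_{1}$, whence $Z_{2} \subseteq Z_{1}$. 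The main technical obstacle is precisely the lifting lemma, which hinges essentially on the assumption $\kappa \in I_{S}(\mathcal{O})$ through the coprimality $\kappa + \langle s \rangle = \langle 1 \rangle$: if some prime factor of $\kappa$ met $S$, this coprimality would fail and a $\kappa$-torsion element of $N$ need not vanish, so the entire comparison of posets would collapse.
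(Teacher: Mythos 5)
Your proof is correct and, for the three steps the paper actually verifies (well-definedness, injectivity, surjectivity), runs along essentially the same lines: the engine is the coprimality $\kappa + \langle s \rangle = \langle 1 \rangle$ from Lemma~\ref{Na = NaOS}(\ref{Na = NaOS 1}), which you isolate into a clean lifting lemma for the $\mathcal{O}$-module $\left(H + \mathcal{O}_S^{\ell}\right)/\left(H + \mathcal{O}^{\ell}\right)$, and your surjectivity step tracks compatible torsion decompositions through Lemma~\ref{ideal quotient gcd} and Lemma~\ref{Na = NaOS}(\ref{Na = NaOS 2}), whereas the paper instead equates cardinalities on each stratum via Theorem~\ref{num. of conn. comp.'s} together with the injectivity already obtained --- the same identification by two equivalent routes. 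The genuine addition in your write-up is the explicit check that the \emph{inverse} of $\eta_S$ is order-preserving, which the paper's proof omits. This is not cosmetic: a bijective order-preserving map between finite posets need not be a poset isomorphism (the identity set bijection from a two-element antichain onto a two-element chain is order-preserving and bijective but not an isomorphism), so without this step the claimed ``isomorphism as finite posets'' is not fully established. Your argument --- first invoking the localized form of Lemma~\ref{interval}(\ref{interval 1}), i.e.\ the $\pi_S$-analog of Lemma~\ref{locally dvr}, to force $H_{Z_2} \subseteq H_{Z_1}$, then feeding $\boldsymbol{x}_2 - \boldsymbol{x}_1$ into the lifting lemma relative to the larger subspace $H_{Z_1}$ --- closes this gap correctly, and your closing observation that the whole mechanism hinges on $\kappa$ lying in $I_S(\mathcal{O})$, via the coprimality with $S$, is exactly the right structural diagnosis.
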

\begin{proof}
First, we show that if $ Z \in L(\A (K / \mathcal{O}))[\delta \cap \mathcal{O}] $, then $ \eta_{S}(Z) \in L(\A_{S} (K / \mathcal{O}_S))[\delta] $. 
Suppose that $ Z = \pi(\boldsymbol{x}) + \pi(H_{Z}) $. 
Then $ \eta_{S}(Z) = \pi_{S}(\boldsymbol{x}) + \pi_{S}(H_{Z}) $. 

Assume $ a/s \in \delta \ (a \in \mathcal{O}, s \in S) $. 
Since $ a \in \delta \cap \mathcal{O} $ and $ Z \in L(\A (K / \mathcal{O}))[\delta \cap \mathcal{O}] $, we have $ \pi(a\boldsymbol{x}) \in \pi(H_{Z}) $ and hence $ \pi_{S}(a\boldsymbol{x}) \in \pi_{S}(H_{Z}) $. 
Then there exists $ \boldsymbol{v} \in H_{Z} $ such that $ a\boldsymbol{x} - \boldsymbol{v} \in \mathcal{O}_{S}^{\ell} $. 
Therefore $ (a/s)\boldsymbol{x} - (1/s)\boldsymbol{v} \in \mathcal{O}_{S}^{\ell} $. 
Since $ (1/s)\boldsymbol{v} \in H_{Z} $, we have $ \pi_{S}((a/s)\boldsymbol{x}) \in \pi_{S}(H_{Z}) $. 
Thus $ \eta_{S}(Z) \in L(\A_{S} (K / \mathcal{O}_S))[\delta] $.

Secondly, we prove the injectivity. 
Suppose $ Z_{1}, Z_{2} \in L(\A (K / \mathcal{O}))[\delta \cap \mathcal{O}] $ and $ Z_{i} = \pi(\boldsymbol{x}_{i}) + \pi(H_{Z_{i}}) $ for $ i \in \{1,2\} $ and assume that $ \eta_{S}(Z_{1}) = \eta_{S}(Z_{2}) $. 
Then $ \pi_{S}(\boldsymbol{x}_{1}) + \pi_{S}(H_{Z_{1}}) = \pi_{S}(\boldsymbol{x}_{2}) + \pi_{S}(H_{Z_{2}}) $ and hence $ H \coloneqq H_{Z_{1}} = H_{Z_{2}} $ by Lemma \ref{intersection}. 
Therefore there exists $ \boldsymbol{v} \in H $ such that $ \boldsymbol{x}_{1} - \boldsymbol{x}_{2} - \boldsymbol{v} \in \mathcal{O}_{S}^{\ell} $. 
Hence there exists $ s \in S $ such that $ s\boldsymbol{x}_{1} - s\boldsymbol{x}_{2} - s\boldsymbol{v} \in \mathcal{O}^{\ell} $. 
Then we have $ \pi(s\boldsymbol{x}_{1}) \equiv \pi(s\boldsymbol{x}_{2}) \pmod{\pi(H)} $.

By Lemma \ref{Na = NaOS}(\ref{Na = NaOS 1}), we have $ (\delta \cap \mathcal{O}) + \langle s \rangle = \langle 1 \rangle $ and hence there exist $ d \in \delta \cap \mathcal{O} $ and $ a \in \mathcal{O} $ such that $ d + as = 1 $. 
Since $ \pi(d\boldsymbol{x}_{1}), \pi(d\boldsymbol{x}_{2}) \in \pi(H) $ by the assumption $ Z_{1}, Z_{2} \in L(\A (K / \mathcal{O}))[\delta \cap \mathcal{O}] $, we have  
\begin{align*}
\pi(\boldsymbol{x}_{1})
&= \pi(d\boldsymbol{x}_{1}) + \pi(as\boldsymbol{x}_{1}) 
\equiv a\pi(s\boldsymbol{x}_{1})  \\
&\equiv a\pi(s\boldsymbol{x}_{2}) 
= \pi(d\boldsymbol{x}_{2}) + \pi(as\boldsymbol{x}_{2}) 
= \pi(\boldsymbol{x}_{2}) \pmod{\pi(H)}. 
\end{align*} 
Thus $ Z_{1} = \pi(\boldsymbol{x}_{1}) + \pi(H) = \pi(\boldsymbol{x}_{2}) + \pi(H) = Z_{2} $ and hence the map $ \eta_{S} $ is injective. 

Finally, we prove the surjectivity. 
A layer in $ L(\mathcal{A}_{S}(K/\mathcal{O}_{S}))[\delta] $ belongs to $ (T_{J,S}/\pi_{S}(H_{J}))[\delta]  $ for some $ J \subseteq [n] $. 
From the discussion so far, when we restrict the map $ \eta_{S} $ to $ (T_{J}/\pi(H_{J}))[\delta \cap \mathcal{O}] $, we have an injection $ \eta_{J} \colon (T_{J}/\pi(H_{J}))[\delta \cap \mathcal{O}] \to (T_{J,S}/\pi_{S}(H_{J}))[\delta] $. 
By Theorem \ref{num. of conn. comp.'s} and Lemma \ref{Na = NaOS}(\ref{Na = NaOS 2}), 
\begin{align*}
|(T_{J}/\pi(H_{J}))[\delta \cap \mathcal{O}]| 
&= \prod_{i=1}^{r(J)}N((\delta \cap \mathcal{O}) + \mathfrak{d}_{J,i})
= \prod_{i=1}^{r(J)}N(\delta + \mathfrak{d}_{J,i}\mathcal{O}_{S}) 
= |(T_{J,S}/\pi_{S}(H_{J}))[\delta]|. 
\end{align*}
Therefore $ \eta_{J} $ is a bijection and hence $ \eta_{S} $ is surjective. 
\end{proof}

\begin{corollary}\label{localization torsion characteristic polynomials}
Let $ \delta \mid \rho_{\mathcal{A}_{S}} $. 
Then 
\begin{align*}
\chi_{\mathcal{A}_{S}(K/\mathcal{O}_{S})}^{\delta}(t) = \chi_{\mathcal{A}(K/\mathcal{O})}^{\delta \cap \mathcal{O}}(t). 
\end{align*}
\end{corollary}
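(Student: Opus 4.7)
The plan is to deduce the corollary directly from Theorem \ref{localization characteristic polynomial} by matching the two sums defining the $\delta$- and $(\delta \cap \mathcal{O})$-characteristic polynomials term by term under the bijection $\eta_{S}$.

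First I would recall the definitions: by construction,
\begin{align*}
\chi_{\mathcal{A}_{S}(K/\mathcal{O}_{S})}^{\delta}(t)
&= \sum_{W \in L(\mathcal{A}_{S}(K/\mathcal{O}_{S}))[\delta]} \mu_{S}(T_{\varnothing,S}, W)\, t^{\dim W}, \\
\chi_{\mathcal{A}(K/\mathcal{O})}^{\delta \cap \mathcal{O}}(t)
&= \sum_{Z \in L(\mathcal{A}(K/\mathcal{O}))[\delta \cap \mathcal{O}]} \mu(T_{\varnothing}, Z)\, t^{\dim Z},
\end{align*}
where $\mu$ and $\mu_{S}$ are the Möbius functions on $L(\mathcal{A}(K/\mathcal{O}))$ and $L(\mathcal{A}_{S}(K/\mathcal{O}_{S}))$ respectively. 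By Proposition \ref{order ideal}, the restrictions of these Möbius functions to the torsion subposets $L(\mathcal{A}(K/\mathcal{O}))[\delta \cap \mathcal{O}]$ and $L(\mathcal{A}_{S}(K/\mathcal{O}_{S}))[\delta]$ agree with the intrinsic Möbius functions of those subposets.

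Next I would invoke Theorem \ref{localization characteristic polynomial}, which states that the restriction
\[
\eta_{S} \colon L(\mathcal{A}(K/\mathcal{O}))[\delta \cap \mathcal{O}] \longrightarrow L(\mathcal{A}_{S}(K/\mathcal{O}_{S}))[\delta]
\]
is an isomorphism of finite posets. Since isomorphic finite posets have identical Möbius functions, we have $\mu(T_{\varnothing}, Z) = \mu_{S}(T_{\varnothing, S}, \eta_{S}(Z))$ for every $Z$ in the source. Moreover, as noted in the paragraph preceding Theorem \ref{localization characteristic polynomial}, if $Z = \pi(\boldsymbol{x}) + \pi(H_{Z})$ then $\eta_{S}(Z) = \pi_{S}(\boldsymbol{x}) + \pi_{S}(H_{Z})$, so $\dim \eta_{S}(Z) = \dim H_{Z} = \dim Z$ (both sides, by definition, equal $\dim H_{Z}$ viewed inside $K^{\ell}$).

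Combining these two observations, the reindexing $W = \eta_{S}(Z)$ gives
\begin{align*}
\chi_{\mathcal{A}_{S}(K/\mathcal{O}_{S})}^{\delta}(t)
&= \sum_{Z \in L(\mathcal{A}(K/\mathcal{O}))[\delta \cap \mathcal{O}]} \mu_{S}(T_{\varnothing,S}, \eta_{S}(Z))\, t^{\dim \eta_{S}(Z)} \\
&= \sum_{Z \in L(\mathcal{A}(K/\mathcal{O}))[\delta \cap \mathcal{O}]} \mu(T_{\varnothing}, Z)\, t^{\dim Z}
= \chi_{\mathcal{A}(K/\mathcal{O})}^{\delta \cap \mathcal{O}}(t),
\end{align*}
which is the claim. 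There is no substantial obstacle here: once Theorem \ref{localization characteristic polynomial} is in hand, the corollary is just the observation that the characteristic polynomial of a torsion subposet is an invariant of its isomorphism class (as a graded poset with ranks given by $\dim$), and $\eta_{S}$ provides a dimension-preserving isomorphism. The only point requiring a little care is confirming the dimension preservation, which follows immediately from the explicit description $\eta_{S}(\pi(\boldsymbol{x}) + \pi(H_{Z})) = \pi_{S}(\boldsymbol{x}) + \pi_{S}(H_{Z})$ together with the definition $\dim Z = \dim H_{Z}$.
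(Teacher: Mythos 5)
Your proof is correct and is precisely the argument the paper leaves implicit when it says the corollary ``follows immediately from Theorem \ref{localization characteristic polynomial}.'' You correctly identify the two points that need checking — that $\eta_{S}$ preserves dimension, and that the Möbius function values used in the definition of the $\kappa$-characteristic polynomial can be computed intrinsically on the torsion subposets (via Proposition \ref{order ideal}), so that the poset isomorphism of Theorem \ref{localization characteristic polynomial} suffices even though $\eta_{S}$ is not an isomorphism of the full posets of layers.
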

\begin{proof}
It follows immediately from Theorem \ref{localization characteristic polynomial}. 
\end{proof}

\begin{remark}
Corollary \ref{localization torsion characteristic polynomials} also can be obtained by Theorem \ref{main thm2} and Theorem \ref{localization}(\ref{localization 3}). 
\end{remark}

\section{Computing methods for the characteristic quasi-polynomials}\label{computing}

\subsection{The LCM-periods}

Recall that the LCM-period $ \rho_{\mathcal{A}} $ is defined by 
\begin{align*}
\rho_{\mathcal{A}} = \lcm\Set{\mathfrak{d}_{J, r(J)} | \varnothing \neq J \subseteq [n]}, 
\end{align*}
where the ideals $ \mathfrak{d}_{J,i} $ are described in Lemma \ref{structure of coker phi_J}. 
In this subsection, we will prove that the size of $J$ can be restricted as follows: 
\begin{proposition}[See also {\cite[p.323 Formula (11)]{kamiya2008periodicity-joac}}] \label{restriction of LCM}
\begin{align*} 
\rho_{\mathcal{A}} = \operatorname{lcm} \Set{\mathfrak{d}_{J, r(J)} | J \subseteq [n], \, 1 \leq | J | \leq \operatorname{min} \{ \ell, n \}}. 
\end{align*}
\end{proposition}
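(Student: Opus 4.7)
Since $|J| \le n$ is automatic from $J \subseteq [n]$, the task is to show that whenever $|J|>\ell$ the ideal $\mathfrak{d}_{J,r(J)}$ divides $\operatorname{lcm}\{\mathfrak{d}_{J',r(J')} : J'\subseteq[n],\ |J'|\le\ell\}$. My plan is to prove the one-step reduction
\begin{align*}
\mathfrak{d}_{J,r(J)} \;\Bigm|\; \operatorname{lcm}_{j\in J}\mathfrak{d}_{J\setminus\{j\},\,r(J\setminus\{j\})} \qquad \text{whenever } |J|>r(J),
\end{align*}
and then iterate. Because $|J|>\ell$ forces $r(J)\le\ell<|J|$, the hypothesis $|J|>r(J)$ holds throughout the recursion, and a descending induction on $|J|$ brings every term down to index sets of size at most $\ell$.

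To establish the one-step divisibility I would argue primewise. Fix a nonzero prime $\mathfrak{p}$ of $\mathcal{O}$; by Theorem \ref{localization} together with the tensor-product computation in Lemma \ref{structure of coker phi_J}, the ideals $\mathfrak{d}_{J,1}\mathcal{O}_{\mathfrak{p}},\dots,\mathfrak{d}_{J,r(J)}\mathcal{O}_{\mathfrak{p}}$ are precisely the nontrivial invariant factors of $C_J$ in the Smith normal form over the DVR $\mathcal{O}_{\mathfrak{p}}$. Because $r(J)<|J|$, the columns of $C_J$ are $K$-linearly dependent; choose a relation $\sum_{j\in J}a_jc_j=0$ with $a_j\in K$ not all zero, clear $\mathfrak{p}$-adic denominators, and divide by the minimum $\mathfrak{p}$-valuation among the coefficients, so that all $a_j\in\mathcal{O}_{\mathfrak{p}}$ and some $a_{j_{\mathfrak{p}}}$ is an $\mathcal{O}_{\mathfrak{p}}$-unit. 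Then $c_{j_{\mathfrak{p}}}$ lies in the $\mathcal{O}_{\mathfrak{p}}$-linear span of the remaining columns, so a single elementary column operation over $\mathcal{O}_{\mathfrak{p}}$ replaces $c_{j_{\mathfrak{p}}}$ by the zero column. Column operations preserve the Smith normal form, and deleting a zero column preserves the invariant factors, so
\begin{align*}
\ord_{\mathfrak{p}}\bigl(\mathfrak{d}_{J,r(J)}\bigr) = \ord_{\mathfrak{p}}\bigl(\mathfrak{d}_{J\setminus\{j_{\mathfrak{p}}\},\,r(J\setminus\{j_{\mathfrak{p}}\})}\bigr),
\end{align*}
where $r(J\setminus\{j_{\mathfrak{p}}\})=r(J)$ since the discarded column was $K$-dependent on the others.

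The right-hand side of the last display is bounded above by $\max_{j\in J}\ord_{\mathfrak{p}}(\mathfrak{d}_{J\setminus\{j\},\,r(J\setminus\{j\})})$ at every prime $\mathfrak{p}$ (trivially at the cofinitely many primes where both sides vanish), so comparing $\mathfrak{p}$-valuations pointwise yields the one-step divisibility of ideals in $\mathcal{O}$, and iterating completes the proof. I expect the main subtlety to be that the ``good'' column index $j_{\mathfrak{p}}$ depends on the prime $\mathfrak{p}$—which is precisely why the reduction must be phrased as an lcm over all $j\in J$ on the right rather than as a bound by any single $\mathfrak{d}_{J\setminus\{j\},\,r(J\setminus\{j\})}$—together with verifying that the $K$-linear relation can always be normalized so that at least one coefficient is an $\mathcal{O}_{\mathfrak{p}}$-unit. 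Once these points are dispatched, the rest is bookkeeping.
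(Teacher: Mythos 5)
Your argument is correct, but it takes a different, more computational route than the paper's, and along the way you introduce a complication that the paper's approach avoids entirely. The paper proves a short module-theoretic lemma (Lemma \ref{restriction}): if $J_1 \supseteq J_2$ and $r(J_1) = r(J_2)$, then $\mathfrak{d}_{J_1, r(J_1)} \mid \mathfrak{d}_{J_2, r(J_2)}$. The proof is global and does not localize: since $H_{J_1} = H_{J_2} =: H$, one has an inclusion of finite $\mathcal{O}$-modules $T_{J_1}/\pi(H) \subseteq T_{J_2}/\pi(H)$, and by Lemma \ref{hissu} the ideal $\mathfrak{d}_{J_i, r(J_i)}$ is exactly the annihilator of $T_{J_i}/\pi(H)$, so the inclusion reverses to give the desired divisibility. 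Then, given $J$ with $|J|>\ell$, one simply chooses $J' \subseteq J$ with $r(J')=|J'|=r(J)\le\ell$ (any maximal $K$-linearly independent subset of the columns), and Lemma \ref{restriction} gives $\mathfrak{d}_{J,r(J)} \mid \mathfrak{d}_{J',r(J')}$ in one step.

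This means the subtlety you flag at the end, that the good column index $j_{\mathfrak{p}}$ depends on $\mathfrak{p}$ and hence forces an lcm on the right-hand side of your one-step reduction, is not intrinsic; it is an artifact of working prime-by-prime with Smith normal forms over $\mathcal{O}_{\mathfrak{p}}$. In fact the stronger single-ideal divisibility $\mathfrak{d}_{J,r(J)} \mid \mathfrak{d}_{J\setminus\{j\}, r(J\setminus\{j\})}$ does hold for any fixed $j\in J$ with $c_j$ lying in the $K$-span of the remaining columns, by Lemma \ref{restriction} applied to $J_1 = J$, $J_2 = J\setminus\{j\}$. The annihilator viewpoint makes this visible globally, while your localized SNF argument cannot see it because it commits to a different $j$ at each prime. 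Your primewise argument is still valid (the lcm bound is weaker but suffices, and including indices $j$ with $r(J\setminus\{j\})<r(J)$ in the lcm only enlarges the right-hand side), but it costs more bookkeeping, requires an iteration where the paper needs a single step, and a minor imprecision remains in the phrase ``precisely the nontrivial invariant factors'' since some $\mathfrak{d}_{J,i}\mathcal{O}_{\mathfrak{p}}$ may become the unit ideal after localizing. What your approach buys is that it is entirely explicit and matrix-theoretic, requiring only the DVR theory of Smith normal forms and no appeal to the torsion arrangement or the annihilator of $T_J/\pi(H_J)$.
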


In order to prove Proposition \ref{restriction of LCM}, we show the following lemma. 
\begin{lemma}[See also {\cite[Lemma 2.3]{kamiya2008periodicity-joac}}] \label{restriction}
Let $J_1$ and $J_2$ be two subsets of $[n]$ such that $J_1 \supseteq J_2$ and $r(J_1) = r(J_2)$. 
Then $\mathfrak{d}_{J_1, r(J_1)} \mid \mathfrak{d}_{J_2, r(J_2)}$. 
\end{lemma}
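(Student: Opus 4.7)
The plan is to produce a short exact sequence of $\mathcal{O}$-modules relating $\coker \phi_{J_1}$ and $\coker \phi_{J_2}$ whose kernel is \emph{torsion-free}, and then read off the desired divisibility from annihilators of torsion parts.

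First, I will use the decomposition $\mathcal{O}^{|J_1|} = \mathcal{O}^{|J_2|} \oplus \mathcal{O}^{|J_1 \setminus J_2|}$ coming from selecting the columns in $J_2$ first. Let $\pi \colon \mathcal{O}^{|J_1|} \to \mathcal{O}^{|J_2|}$ be the projection onto the first summand and $N \coloneqq \ker \pi \cong \mathcal{O}^{|J_1 \setminus J_2|}$. Since $C_{J_2}$ consists of the columns of $C_{J_1}$ indexed by $J_2$, we have $\pi \circ \phi_{J_1} = \phi_{J_2}$, which produces a surjection $\bar{\pi} \colon \coker \phi_{J_1} \twoheadrightarrow \coker \phi_{J_2}$ whose kernel is $(N + \Ima \phi_{J_1})/\Ima \phi_{J_1} \cong N/(N \cap \Ima \phi_{J_1})$.

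The key step, and the only spot where the rank hypothesis enters, is to show that $N \cap \Ima \phi_{J_1} = 0$. Suppose $\boldsymbol{x} \in \mathcal{O}^\ell$ satisfies $\phi_{J_1}(\boldsymbol{x}) \in N$; then $\boldsymbol{x} C_{J_2} = 0$, so $\boldsymbol{x}$ lies in the $K$-kernel of $\phi_{J_2} \otimes_{\mathcal{O}} K$. Because $r(J_1) = r(J_2)$, the inclusion $\ker(\phi_{J_1} \otimes K) \subseteq \ker(\phi_{J_2} \otimes K)$ of $K$-subspaces of the same dimension $\ell - r(J_1)$ is actually an equality, so $\boldsymbol{x} C_{J_1} = 0$ in $K^{|J_1|}$, hence in $\mathcal{O}^{|J_1|}$. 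Thus $\phi_{J_1}(\boldsymbol{x}) = 0$, establishing $N \cap \Ima \phi_{J_1} = 0$. This yields the exact sequence
\begin{align*}
0 \longrightarrow \mathcal{O}^{|J_1 \setminus J_2|} \longrightarrow \coker \phi_{J_1} \overset{\bar{\pi}}{\longrightarrow} \coker \phi_{J_2} \longrightarrow 0
\end{align*}
with torsion-free kernel.

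Since the kernel is torsion-free, the restriction $\bar{\pi}|_{\mathrm{tors}(\coker \phi_{J_1})}$ is an injection into $\mathrm{tors}(\coker \phi_{J_2})$. By Lemma \ref{structure of coker phi_J} the torsion of $\coker \phi_J$ is $\bigoplus_{i=1}^{r(J)} \mathcal{O}/\mathfrak{d}_{J,i}$, whose annihilator is $\bigcap_{i=1}^{r(J)} \mathfrak{d}_{J,i} = \mathfrak{d}_{J, r(J)}$ (the largest, i.e.\ smallest-as-a-set, ideal in the divisibility chain). An injection of $\mathcal{O}$-modules forces the containment of annihilators, so $\mathfrak{d}_{J_1, r(J_1)} \supseteq \mathfrak{d}_{J_2, r(J_2)}$, which is precisely $\mathfrak{d}_{J_1, r(J_1)} \mid \mathfrak{d}_{J_2, r(J_2)}$. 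The main obstacle is verifying $N \cap \Ima \phi_{J_1} = 0$; once the rank equality is translated into equality of $K$-kernels, the rest is formal.
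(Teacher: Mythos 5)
Your argument is correct. The key move is the same in spirit as the paper's — establish an injection $\mathrm{tors}(\coker\phi_{J_1}) \hookrightarrow \mathrm{tors}(\coker\phi_{J_2})$ and then compare annihilators via Lemma \ref{structure of coker phi_J} — but the mechanism you use to produce the injection is genuinely different. The paper stays on the torsion-arrangement side: from $J_1 \supseteq J_2$ one gets $T_{J_1} \subseteq T_{J_2}$, the rank equality forces $H_{J_1} = H_{J_2} \eqqcolon H$, so $T_{J_1}/\pi(H) \subseteq T_{J_2}/\pi(H)$, and Lemma \ref{hissu} identifies these quotients with the torsion submodules whose annihilators are $\mathfrak{d}_{J_i,r(J_i)}$. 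You instead stay entirely on the cokernel side, building the short exact sequence
\begin{align*}
0 \longrightarrow \mathcal{O}^{|J_1\setminus J_2|} \longrightarrow \coker\phi_{J_1} \overset{\bar\pi}{\longrightarrow} \coker\phi_{J_2} \longrightarrow 0
\end{align*}
and using the rank equality to show the kernel $N$ meets $\Ima\phi_{J_1}$ trivially, hence is torsion-free, so $\bar\pi$ is injective on torsion. Your approach bypasses Lemma \ref{hissu} and the torsion-arrangement machinery, making it self-contained and purely module-theoretic — it would work even before Section \ref{torsion arrangement} is developed — at the cost of being a bit longer. The paper's version is shorter precisely because it leans on the geometry already set up; your version would be the natural proof if this lemma were needed earlier in the paper. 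Both are sound.
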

\begin{proof}
Since $ J_{1} \supseteq J_{2} $, we have $ T_{J_{1}} \subseteq T_{J_{2}} $ and $ H_{J_{1}} \subseteq H_{J_{2}} $. 
Moreover, since $ r(J_{1}) = r(J_{2}) $, we have $ H \coloneqq H_{J_{1}} = H_{J_{2}} $. 
Then we obtain an inclusion $ T_{J_{1}}/\pi(H) \subseteq T_{J_{2}}/\pi(H) $. 
Since $ \mathfrak{d}_{J_{i}, r(J_{i})} = \Ann(T_{J_{i}}/\pi(H)) $ for each $ i \in \{1,2\} $ by Lemma \ref{hissu}, we have $ \mathfrak{d}_{J_{2}, r(J_{2})} \subseteq \mathfrak{d}_{J_{1}, r(J_{1})} $. 
Thus $ \mathfrak{d}_{J_{1}, r(J_{1})} \mid \mathfrak{d}_{J_{2}, r(J_{2})} $. 
\end{proof}

\begin{proof}[Proof of Proposition \ref{restriction of LCM}]
Assume that $\ell < n$. Let $J$ be a subset of $[n]$ with $\ell < |J| \leq n$. 
Then we can choose a subset $J' \subseteq J$ such that $r(J') = |J'| = r(J) \leq \ell$. 
Lemma \ref{restriction} implies that $\mathfrak{d}_{J, r(J)} \mid \mathfrak{d}_{J', r(J')}$. 
This leads to the desired result. 
\end{proof}

\subsection{Coefficients}

Recall that the $ \kappa $-constituent of $ \chi_{\mathcal{A}}^{\mathrm{quasi}} $ is given by 
\begin{align*}
f_{\A}^{\kappa} (t) = \sum_{J \subseteq [n]} (-1)^{|J|} \, m(J, \kappa) t^{\ell - r(J)}
\end{align*}
by Theorem \ref{main thm}, where $m(J, \kappa) = \prod_{i = 1}^{r(J)} N \left( \kappa + \mathfrak{d}_{J, i} \right)$. 

\begin{proposition}[See also {\cite[Corollary 2.3]{kamiya2011periodicity-aoc}}] \label{cor1}
Suppose that $\kappa, \kappa' \in I(\mathcal{O})$ both divide $\rho_{\mathcal{A}}$ and assume that there exists $s \in \mathbb{Z}_{>0}$ such that 
$ \kappa + \mathfrak{d}_{J, r(J)} = \kappa' + \mathfrak{d}_{J, r(J)} $ for all $J \subseteq [n]$ with $|J| \leq s$. 
Then 
\begin{align*}
\deg \left(
f^\kappa_\A (t) - f^{\kappa'}_\A (t) 
\right) < \ell - s . 
\end{align*}
In particular, we have 
$\deg \left( 
f^\kappa_\A (t) - f^{\langle 1 \rangle}_\A (t) 
\right) < \ell - s  
$ if $\mathfrak{d}_{J, r(J)} + \kappa = \langle 1 \rangle $ for all $J \subseteq [n]$ with $|J| \leq s$. 
\end{proposition}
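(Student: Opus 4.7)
The plan is to expand the difference $f^\kappa_\A(t) - f^{\kappa'}_\A(t)$ term by term using the explicit formula from Theorem \ref{main thm}, and show that each individual monomial of degree at least $\ell - s$ already vanishes. Concretely, since
\begin{align*}
f^\kappa_\A(t) - f^{\kappa'}_\A(t) = \sum_{J \subseteq [n]} (-1)^{|J|} \bigl( m(J, \kappa) - m(J, \kappa') \bigr)\, t^{\ell - r(J)},
\end{align*}
it would suffice to prove the pointwise identity $m(J, \kappa) = m(J, \kappa')$ for every $J \subseteq [n]$ with $r(J) \leq s$; the remaining summands contribute only monomials of degree strictly less than $\ell - s$.

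The heart of the argument is therefore the reduction from arbitrary $J$ with $r(J) \leq s$ to the hypothesis, which is only assumed for subsets of size $\leq s$. Given such a $J$, I would extract a subset $J' \subseteq J$ with $|J'| = r(J) = r(J')$ by choosing $r(J)$ columns of $C_J$ that are linearly independent over $K$. Since $|J'| \leq s$, the hypothesis gives $\kappa + \mathfrak{d}_{J', r(J')} = \kappa' + \mathfrak{d}_{J', r(J')}$. Lemma \ref{restriction} yields $\mathfrak{d}_{J, r(J)} \mid \mathfrak{d}_{J', r(J')}$, and combined with the divisibility chain $\mathfrak{d}_{J, 1} \mid \cdots \mid \mathfrak{d}_{J, r(J)}$ this gives the inclusion $\mathfrak{d}_{J', r(J')} \subseteq \mathfrak{d}_{J, i}$ for every $i \in \{1, \dots, r(J)\}$. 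The key ideal-theoretic manipulation is then
\begin{align*}
\kappa + \mathfrak{d}_{J, i} = \kappa + \mathfrak{d}_{J', r(J')} + \mathfrak{d}_{J, i} = \kappa' + \mathfrak{d}_{J', r(J')} + \mathfrak{d}_{J, i} = \kappa' + \mathfrak{d}_{J, i},
\end{align*}
valid because $\mathfrak{d}_{J', r(J')} \subseteq \mathfrak{d}_{J, i}$ means adjoining $\mathfrak{d}_{J', r(J')}$ to either side leaves the sum unchanged. Taking absolute norms and multiplying over $i$ yields $m(J, \kappa) = m(J, \kappa')$.

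The ``in particular'' statement follows by specializing to $\kappa' = \langle 1 \rangle$, since the assumption $\mathfrak{d}_{J, r(J)} + \kappa = \langle 1 \rangle$ for $|J| \leq s$ is precisely the identity $\mathfrak{d}_{J, r(J)} + \kappa = \mathfrak{d}_{J, r(J)} + \langle 1 \rangle$ required by the main hypothesis. I do not anticipate any serious obstacle: once the auxiliary $J'$ is chosen so that $|J'| = r(J')$, Lemma \ref{restriction} together with the invariant-factor divisibility chain reduces the problem to the elementary fact that adjoining a subideal to a sum of ideals leaves the sum unchanged.
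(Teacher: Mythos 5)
Your proposal is correct and follows essentially the same argument as the paper: reduce to showing $m(J,\kappa)=m(J,\kappa')$ for all $J$ with $r(J)\leq s$, pick a subset $J'\subseteq J$ with $|J'|=r(J')=r(J)\leq s$ to bring the hypothesis into play, invoke Lemma \ref{restriction} for $\mathfrak{d}_{J,r(J)}\mid\mathfrak{d}_{J',r(J')}$, and use the fact that adjoining the subideal $\mathfrak{d}_{J',r(J')}$ to each $\kappa+\mathfrak{d}_{J,i}$ is harmless. The only cosmetic difference is that the paper phrases the middle step through $\kappa+\mathfrak{d}_{J,r(J)}+\mathfrak{d}_{J,i}$ rather than $\kappa+\mathfrak{d}_{J',r(J')}+\mathfrak{d}_{J,i}$, which is an equivalent route.
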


\begin{proof}
It is sufficient to show that 
$m (J, \kappa) = m(J, \kappa')$ for each $J \subseteq [n]$ with $r (J) \leq s$. 
Let $J'$ be a subset of $J$ such that $r(J') = |J'| = r (J) \leq s$. 
By the assumption, we have $\kappa + \mathfrak{d}_{J', r(J')} = \kappa' + \mathfrak{d}_{J', r(J')} $. 
By Lemma \ref{restriction}, $\mathfrak{d}_{J, r(J)} \mid \mathfrak{d}_{J', r(J')}$. 
Hence we obtain 
\begin{align*}
m (J, \kappa) 
&= \prod_{i=1}^{r(J)} N (\kappa + \mathfrak{d}_{J,i}) 
= \prod_{i=1}^{r(J)} N (\kappa + \mathfrak{d}_{J, r(J)} + \mathfrak{d}_{J,i}) 
= \prod_{i=1}^{r(J)} N (\kappa' + \mathfrak{d}_{J, r(J)} + \mathfrak{d}_{J,i}) \\ 
&= \prod_{i=1}^{r(J)} N (\kappa' + \mathfrak{d}_{J,i}) 
= m (J, \kappa') .
\end{align*}
\end{proof}

\begin{proposition}[See also {\cite[Corollary 2.4]{kamiya2011periodicity-aoc}}] \label{cor2}
Suppose that $\kappa, \kappa' \in I(\mathcal{O}) $ both divide $\rho_{\mathcal{A}}$ and that $\kappa + \kappa' = \langle 1 \rangle$. 
Assume that there exists $s \in \mathbb{Z}_{>0}$ such that 
$\kappa + \mathfrak{d}_{J, r(J)} = \langle 1 \rangle$ or $ \kappa' + \mathfrak{d}_{J, r(J)} = \langle 1 \rangle$ for all $J \subseteq [n]$ with $|J| \leq s$. 
Then 
\begin{align*}
\deg \left( 
f^{\langle 1 \rangle}_\A (t) + 
f^{\kappa \kappa'}_\A (t) 
- f^{\kappa}_\A (t) 
- f^{\kappa'}_\A (t) 
\right) < \ell - s . 
\end{align*}
\end{proposition}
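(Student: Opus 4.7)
The plan is to expand the combination term by term using the explicit formula for the constituents from Theorem \ref{main thm} and to show that the coefficient of $t^{\ell-r(J)}$ vanishes for every $J \subseteq [n]$ with $r(J) \leq s$. By Theorem \ref{main thm}, we have
\begin{align*}
f_{\A}^{\langle 1 \rangle}(t) + f_{\A}^{\kappa\kappa'}(t) - f_{\A}^{\kappa}(t) - f_{\A}^{\kappa'}(t) = \sum_{J \subseteq [n]} (-1)^{|J|} c(J) \, t^{\ell - r(J)},
\end{align*}
where $c(J) \coloneqq m(J,\langle 1\rangle) + m(J,\kappa\kappa') - m(J,\kappa) - m(J,\kappa')$. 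Thus it suffices to prove that $c(J) = 0$ whenever $r(J) \leq s$.

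The key observation is that the coprimality assumption $\kappa + \kappa' = \langle 1 \rangle$ yields a multiplicative factorization of $m(J, \kappa\kappa')$. Specifically, for any ideal $\mathfrak{c}$ of $\mathcal{O}$, the identity
\begin{align*}
\kappa\kappa' + \mathfrak{c} = (\kappa + \mathfrak{c})(\kappa' + \mathfrak{c})
\end{align*}
holds (the inclusion $\supseteq$ expands the product and uses $\kappa\kappa' \subseteq \kappa\kappa'$; the reverse uses $\mathfrak{c} = \mathfrak{c}(\kappa + \kappa') \subseteq (\kappa + \mathfrak{c})(\kappa' + \mathfrak{c})$). Applying this with $\mathfrak{c} = \mathfrak{d}_{J,i}$ and using multiplicativity of the absolute norm on Dedekind domains, we get $N(\kappa\kappa' + \mathfrak{d}_{J,i}) = N(\kappa + \mathfrak{d}_{J,i}) \, N(\kappa' + \mathfrak{d}_{J,i})$, and taking the product over $i = 1, \dots, r(J)$ gives $m(J, \kappa\kappa') = m(J, \kappa) \, m(J, \kappa')$. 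Since $m(J, \langle 1 \rangle) = 1$, the coefficient factors as
\begin{align*}
c(J) = \bigl(m(J, \kappa) - 1\bigr)\bigl(m(J, \kappa') - 1\bigr).
\end{align*}

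Now I would translate the hypothesis, originally stated for $|J| \leq s$, into a statement about $r(J) \leq s$ exactly as in the proofs of Proposition \ref{restriction of LCM} and Proposition \ref{cor1}. Given $J$ with $r(J) \leq s$, choose $J' \subseteq J$ with $|J'| = r(J') = r(J) \leq s$. By hypothesis, $\kappa + \mathfrak{d}_{J', r(J')} = \langle 1 \rangle$ or $\kappa' + \mathfrak{d}_{J', r(J')} = \langle 1 \rangle$. By Lemma \ref{restriction}, $\mathfrak{d}_{J, r(J)} \mid \mathfrak{d}_{J', r(J')}$, hence the same coprimality transfers to $\mathfrak{d}_{J, r(J)}$, and therefore to every $\mathfrak{d}_{J,i}$ (since $\mathfrak{d}_{J,i} \mid \mathfrak{d}_{J, r(J)}$). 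In the first alternative, $m(J, \kappa) = \prod_{i=1}^{r(J)} N(\langle 1 \rangle) = 1$, so the first factor of $c(J)$ vanishes; in the second alternative, $m(J, \kappa') = 1$ and the second factor vanishes. Either way $c(J) = 0$, which leaves only terms of degree strictly less than $\ell - s$ in the combination.

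The main conceptual step is the factorization identity $\kappa\kappa' + \mathfrak{c} = (\kappa+\mathfrak{c})(\kappa'+\mathfrak{c})$ under $\kappa + \kappa' = \langle 1 \rangle$; everything else is bookkeeping combining Theorem \ref{main thm} with the descent from $J$ to a linearly independent subset $J'$ via Lemma \ref{restriction}. No genuine obstacle is expected, only the care needed to correctly invoke norm multiplicativity on the product of ideals in a Dedekind domain.
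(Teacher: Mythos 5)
Your proof is correct and uses essentially the same argument as the paper: both reduce to the coefficient identity $1 + m(J,\kappa\kappa') - m(J,\kappa) - m(J,\kappa') = 0$ for $r(J) \leq s$, both establish the ideal identity $(\kappa + \mathfrak{d}_{J,i})(\kappa' + \mathfrak{d}_{J,i}) = \kappa\kappa' + \mathfrak{d}_{J,i}$ from $\kappa + \kappa' = \langle 1 \rangle$ to get the multiplicativity $m(J,\kappa\kappa') = m(J,\kappa)m(J,\kappa')$, and both use Lemma \ref{restriction} via a subset $J' \subseteq J$ with $r(J') = |J'| = r(J)$ to transfer the coprimality hypothesis from $|J| \leq s$ to $r(J) \leq s$. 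The only cosmetic difference is the direction of presentation: you derive the factorization $c(J) = (m(J,\kappa)-1)(m(J,\kappa')-1)$ first and then kill one factor, while the paper starts from $(1-m(J,\kappa))(1-m(J,\kappa')) = 0$ and expands.
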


\begin{proof}
For each $J \subseteq [n]$ with $r (J) \leq s$, it is sufficient to show that 
\begin{align*}
1 + m(J, \kappa \kappa') - m(J, \kappa) - m(J, \kappa') = 0. 
\end{align*}
Let $J'$ be a subset of $J$ such that $r(J') = |J'| = r (J) \leq s$. 
By the assumption, we have $\kappa + \mathfrak{d}_{J', r(J')} = \langle 1 \rangle $ or $ \kappa' + \mathfrak{d}_{J', r(J')} = \langle 1 \rangle$. 
By Lemma \ref{restriction}, $\mathfrak{d}_{J, r(J)} \mid \mathfrak{d}_{J', r(J')}$. 
Hence we have 
$\kappa + \mathfrak{d}_{J, r(J)} = \langle 1 \rangle$ or $ \kappa' + \mathfrak{d}_{J, r(J)} = \langle 1 \rangle$. 
Since $\mathfrak{d}_{J,i} \mid \mathfrak{d}_{J, r(J)}$ for each $i$, we have 
\begin{align*}
m(J, \kappa) = \prod_{i=1}^{r(J)} N (\kappa + \mathfrak{d}_{J, i}) = 1 \qquad \text{or} \qquad 
m(J, \kappa') = \prod_{i=1}^{r(J)} N (\kappa' + \mathfrak{d}_{J, i}) = 1. 
\end{align*}
Thus we have 
\begin{align*}
0 
&= \left( 1 - m(J, \kappa) \right) \left( 1 - m(J, \kappa') \right) 
= 1 - m(J, \kappa) - m(J, \kappa') + m(J, \kappa) m(J, \kappa') \\ 
&= 1 - m(J, \kappa) - m(J, \kappa') + \prod_{i=1}^{r(J)} N \left( \left( \kappa + \mathfrak{d}_{J, i} \right) \left( \kappa + \mathfrak{d}_{J, i} \right) \right). 
\end{align*}
Since $\kappa + \kappa' = \langle 1 \rangle$, 
\begin{align*}
\left( \kappa + \mathfrak{d}_{J, i} \right) \left( \kappa' + \mathfrak{d}_{J, i} \right) 
&= \kappa\kappa' + \mathfrak{d}_{J, i} \left( \kappa + \kappa' + \mathfrak{d}_{J, i} \right) 
= \kappa\kappa' + \mathfrak{d}_{J, i} \left( \langle 1 \rangle + \mathfrak{d}_{J, i} \right) 
= \kappa\kappa' + \mathfrak{d}_{J, i}. 
\end{align*}
Therefore 
\begin{align*}
\prod_{i=1}^{r(J)} N \left( \left( \kappa + \mathfrak{d}_{J, i} \right) \left( \kappa + \mathfrak{d}_{J, i} \right) \right)
= \prod_{i=1}^{r(J)}(\kappa\kappa' + \mathfrak{d}_{J,i})
= m(J, \kappa\kappa'), 
\end{align*}
which leads to the desired result. 
\end{proof}

\begin{proposition}[See also {\cite[Corollary 2.5]{kamiya2011periodicity-aoc}}] \label{cor3}
Suppose that $\kappa, \kappa' \in I(\mathcal{O})$ both divide $\rho_{\mathcal{A}}$ and that $\kappa + \kappa' = \langle 1 \rangle$. 
If $\mathfrak{d}_{J, r(J)}$ are powers of prime ideals for all $J$, we have 
$ 
f^{\kappa \kappa'}_\A (t) 
= f^{\kappa}_\A (t) 
+ f^{\kappa'}_\A (t) 
- f^{\langle 1 \rangle}_\A (t) $. 
\end{proposition}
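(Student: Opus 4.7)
The plan is to reduce the desired polynomial identity to the coefficient-wise equality
\begin{align*}
1 + m(J,\kappa\kappa') - m(J,\kappa) - m(J,\kappa') = 0 \qquad \text{for every } J \subseteq [n],
\end{align*}
which, thanks to Theorem~\ref{main thm} and the convention $m(J,\langle 1\rangle) = 1$, yields exactly $f^{\langle 1\rangle}_\A + f^{\kappa\kappa'}_\A - f^{\kappa}_\A - f^{\kappa'}_\A = 0$. This is the very identity that drove the proof of Proposition~\ref{cor2}, the difference being that there it was forced only for $J$ with $r(J) \le s$ (via Lemma~\ref{restriction}), whereas now I need it for every $J$ without any rank restriction.

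The crux is therefore the following dichotomy, which under the present hypotheses will hold for every nonempty $J$: either $\kappa + \mathfrak{d}_{J,r(J)} = \langle 1\rangle$ or $\kappa' + \mathfrak{d}_{J,r(J)} = \langle 1\rangle$. Writing $\mathfrak{d}_{J,r(J)} = \mathfrak{p}^a$ by the prime power hypothesis (with $a=0$ giving the trivial case), I would use $\kappa + \kappa' = \langle 1\rangle$ to preclude $\mathfrak{p}$ from dividing both of $\kappa,\kappa'$; say $\mathfrak{p} \nmid \kappa$, so $\mathfrak{p} + \kappa = \langle 1\rangle$. Writing $1 = p + k$ with $p \in \mathfrak{p}$ and $k \in \kappa$ and expanding $1 = (p+k)^a$ via the binomial theorem, every term lands in $\mathfrak{p}^a + \kappa$, whence $\kappa + \mathfrak{p}^a = \langle 1\rangle$.

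With the dichotomy established, I would finish by mirroring the factorisation computation in Proposition~\ref{cor2}. Since $\mathfrak{d}_{J,i} \mid \mathfrak{d}_{J,r(J)}$, the dichotomy propagates to every $\mathfrak{d}_{J,i}$ and forces $m(J,\kappa) = 1$ or $m(J,\kappa') = 1$; hence
\begin{align*}
0 = (1 - m(J,\kappa))(1 - m(J,\kappa')) = 1 - m(J,\kappa) - m(J,\kappa') + m(J,\kappa)\,m(J,\kappa'),
\end{align*}
and the identity $(\kappa + \mathfrak{d}_{J,i})(\kappa' + \mathfrak{d}_{J,i}) = \kappa\kappa' + \mathfrak{d}_{J,i}$ (valid because $\kappa + \kappa' = \langle 1\rangle$) rewrites $m(J,\kappa)\,m(J,\kappa')$ as $m(J,\kappa\kappa')$. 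The only genuine obstacle is the dichotomy itself: once coprimality with $\mathfrak{p}$ has been promoted to coprimality with $\mathfrak{p}^a$ using the prime power assumption, everything else is bookkeeping inherited from Proposition~\ref{cor2}, simply freed from the $r(J) \le s$ constraint.
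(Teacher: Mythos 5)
Your proposal is correct and follows essentially the same route as the paper: establish that the dichotomy $\kappa + \mathfrak{d}_{J,r(J)} = \langle 1\rangle$ or $\kappa' + \mathfrak{d}_{J,r(J)} = \langle 1\rangle$ holds for every $J$ because the $\mathfrak{d}_{J,r(J)}$ are prime powers, and then feed this into the computation underlying Proposition~\ref{cor2}. The paper simply invokes Proposition~\ref{cor2} with $s$ unbounded once the dichotomy is in hand, whereas you re-derive the coefficient identity $1 + m(J,\kappa\kappa') - m(J,\kappa) - m(J,\kappa') = 0$ directly; this is a harmless redundancy, and your binomial-expansion justification of $\kappa + \mathfrak{p}^a = \langle 1\rangle$ is a fine elementary substitute for the implicit Dedekind-domain factorization argument in the paper.
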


\begin{proof}
Since $ \kappa + \kappa^{\prime} = \langle 1 \rangle $, it is always true that $ \kappa + \mathfrak{p}^{m} = \langle 1 \rangle $ or $ \kappa^{\prime} + \mathfrak{p}^{m} = \langle 1 \rangle $ for any prime ideal $ \mathfrak{p} $ and $ m \in \mathbb{Z}_{\geq 0} $. 
Therefore the claim follows immediately from Proposition \ref{cor2}. 
\end{proof}

\section{The characteristic quasi-polynomials of non-crystallographic root systems of types $\mathrm{H}_2$, $\mathrm{H}_3$, and $\mathrm{H}_4$} \label{Examples}

Let $ \Phi $ be an irreducible crystallographic root system and $ \Phi^{+} $ a positive system of $ \Phi $. 
Every positive root is expressed as a linear combination of the simple roots with integral coefficients. 
Gathering the coefficient column vectors, we obtain the set $ \mathcal{A}_{\Phi} $ consisting of integral column vectors. 
Kamiya, Takemura, and Terao \cite{kamiya2007characteristic-a, kamiya2010characteristic-alsas} computed the characteristic quasi-polynomial of $ \mathcal{A}_{\Phi} $ and its LCM-period explicitly by using the classification of root systems. 
Note that Suter \cite{suter1998number-moc} gave essentially the same calculation in terms of the number of lattice points in the fundamental alcoves (the Ehrhart quasi-polynomials). 

Now, by the results in this paper, we can consider the non-crystallographic cases. 
The irreducible non-crystallographic root systems are classified into one infinite family $ \mathrm{I}_{2}(m) $ together with two exceptional types $ \mathrm{H}_{3} $ and $ \mathrm{H}_{4} $. 
We consider the root systems of types $ \mathrm{H}_{2} \coloneqq I_{2}(5), \mathrm{H}_{3} $, and $ \mathrm{H}_{4} $. 

We assume that every root in $ \Phi_{\mathrm{H}_{n}} $ is normalized. 
Then every positive root is represented as a linear combination of the simple roots over $ \mathcal{O} \coloneqq \mathbb{Z}[\tau] $, where $ \tau $ denotes the golden ratio $ \tau \coloneqq \frac{1+\sqrt{5}}{2} $. 
The ring $ \mathcal{O} $ is a residually finite Dedekind domain since it is the ring of integers of the quadratic field $ K \coloneqq \mathbb{Q}(\sqrt{5}) $. 

We calculate the characteristic quasi-polynomials and the LCM-periods (the minimum periods) of the root systems of types $ \mathrm{H}_{2}, \mathrm{H}_{3} $, and $ \mathrm{H}_{4} $, using with SageMath \cite{sagemath} and propositions obtained in Section \ref{computing}. 

\subsection{The characteristic quasi-polynomial $\chi_{\mathrm{H}_{2}}^{\mathrm{quasi}}$}
Suppose that 
\begin{align*}
\Phi_{\mathrm{H}_{2}} %= \Phi_{I_{2}(5)} 
= \Set{ \pm\left(\cos\dfrac{2\pi k}{5}, \sin\dfrac{2\pi k}{5}\right) | 0 \leq k \leq 4}. 
\end{align*}
Note that the convex hull of $ \Phi_{\mathrm{H}_{2}} $ is the regular decagon. 
One may choose simple roots as follows. 
\begin{align*}
\alpha_{1} = (1,0), \qquad
\alpha_{2} = \left(\cos\dfrac{4\pi}{5}, \sin\dfrac{4\pi}{5} \right). 
\end{align*}
The coefficient matrix of the positive roots with respect to $\{ \alpha_1, \alpha_2 \}$ is 
\begin{align*}
\begin{pmatrix}
1 & 0 & \tau & 1 & \tau \\
0 & 1 & 1 & \tau & \tau
\end{pmatrix}. 
\end{align*}
The LCM-period is $ \rho_{\mathrm{H}_{2}} = \langle 1 \rangle $ and the constituent of $ \chi_{\mathrm{H}_{2}}^{\mathrm{quasi}} $ is as follows: 
\begin{align*}
f_{\mathrm{H}_2}^{\langle 1 \rangle}(t) = t^{2}-5t+4 = (t-1)(t-4). 
\end{align*}

\subsection{The characteristic quasi-polynomial $\chi_{\mathrm{H}_{3}}^{\mathrm{quasi}}$}
Suppose that 
\begin{align*}
\Phi_{\mathrm{H}_{3}} = \Set{\begin{array}{cl}
(\pm 1, 0, 0) & \text{ and all permutations} \vspace{1mm} \\
\frac{1}{2}(\pm \tau, \pm 1, \pm \tau^{-1}) & \text{ and all even permutations}
\end{array} }. 
\end{align*}
Note that the convex hull of $ \Phi_{\mathrm{H}_{3}} $ is known as the icosidodecahedron. 
One may choose simple roots as follows: 
\begin{align*}
\alpha_{1} = \frac{1}{2}(\tau, -1, \tau^{-1}), \qquad
\alpha_{2} = \frac{1}{2}(-\tau, 1, \tau^{-1}), \qquad
\alpha_{3} = \frac{1}{2}(1, \tau^{-1}, -\tau). 
\end{align*}
The coefficient matrix of the positive roots with respect to $\{ \alpha_1, \alpha_2, \alpha_3 \}$ is 
\begin{align*}
\begin{pmatrix}
1 & 0 & \tau & 1 & \tau & 0 & 0 & \tau & \tau & \tau^{2} & 1 & \tau & \tau & \tau^{2} & \tau^{2} \\
0 & 1 & 1 & \tau & \tau & 0 & 1 & 1 & \tau^{2} & \tau^{2} & \tau & \tau & \tau^{2} & \tau^{2} & 2 \tau \\
0 & 0 & 0 & 0 & 0 & 1 & 1 & 1 & 1 & 1 & \tau & \tau & \tau & \tau & \tau
\end{pmatrix}. 
\end{align*}
The LCM-period is $ \rho_{\mathrm{H}_{3}} = \langle 2 \rangle $ and the constituents of $ \chi_{\mathrm{H}_{3}}^{\mathrm{quasi}} $ is as follows: 
\begin{align*}
f^{\langle 1 \rangle}_{\mathrm{H}_3} (t) &= t^{3}-15t^{2}+59t-45 = (t-1)(t-5)(t-9), \\
f^{\langle 2 \rangle}_{\mathrm{H}_3} (t) &= t^{3}-15t^{2}+59t-60 = (t-4)(t^{2}-11t+15). 
\end{align*}

\subsection{The characteristic quasi-polynomial $\chi_{\mathrm{H}_{4}}^{\mathrm{quasi}}$}
Suppose that 
\begin{align*}
\Phi_{\mathrm{H}_{4}} = \Set{\begin{array}{cl}
(\pm 1, 0, 0, 0) & \text{ and all permutations} \vspace{1mm} \\
\frac{1}{2}(\pm 1, \pm, 1, \pm 1, \pm 1) & \text{ and all permutations} \vspace{2mm} \\
\frac{1}{2}(\pm \tau, \pm 1, \pm \tau^{-1}, 0) & \text{ and all even permutations}
\end{array} }. 
\end{align*}
Note that the convex hull of $ \Phi_{\mathrm{H}_{4}} $ is known as the $ 600 $-cell. 
One may choose simple roots as follows: 
\begin{align*}
\alpha_{1} &= \frac{1}{2}(\tau, -1, \tau^{-1},0), \qquad
\alpha_{2} = \frac{1}{2}(-\tau, 1, \tau^{-1},0), \\
\alpha_{3} &= \frac{1}{2}(1, \tau^{-1}, -\tau,0), \qquad
\alpha_{4} = \frac{1}{2}(-1,-\tau,0,\tau^{-1}).  
\end{align*}
The coefficient matrix of the positive roots with respect to $\{ \alpha_1, \alpha_2, \alpha_3, \alpha_4 \}$ is 
\begin{align*}
&
\left(
\begin{array}{cccccccccc}
1 & 0 & 1 & \tau & \tau & 0 & 0 & \tau & \tau & \tau + 1 
\\
0 & 1 & \tau & 1 & \tau & 0 & 1 & 1 & \tau + 1 & \tau + 1 
\\
0 & 0 & 0 & 0 & 0 & 1 & 1 & 1 & 1 & 1 
\\
0 & 0 & 0 & 0 & 0 & 0 & 0 & 0 & 0 & 0 
\end{array}
\right.
\\
&
\left.
\ \ 
\begin{array}{cccccccccc}
1 & \tau & \tau & \tau + 1 & \tau + 1 & 0 & 0 & 0 & \tau & \tau 
\\
\tau & \tau & \tau + 1 & \tau + 1 & 2 \tau & 0 & 0 & 1 & 1 & \tau + 1 
\\
\tau & \tau & \tau & \tau & \tau & 0 & 1 & 1 & 1 & 1 
\\
0 & 0 & 0 & 0 & 0 & 1 & 1 & 1 & 1 & 1 
\end{array}
\right.
\\
&
\left. 
\ \ 
\begin{array}{cccccccccc}
\tau + 1 & \tau & \tau + 1 & \tau + 1 & 2 \tau + 1 & 2 \tau + 1 & 2 \tau + 1 & 1 & \tau & \tau 
\\
\tau + 1 & \tau + 1 & \tau + 1 & 2 \tau + 1 & 2 \tau + 1 & 2 \tau + 2 & 2 \tau + 2 & \tau & \tau & \tau + 1 
\\
1 & \tau + 1 & \tau + 1 & \tau + 1 & \tau + 1 & \tau + 1 & \tau + 2 & \tau & \tau & \tau 
\\
1 & 1 & 1 & 1 & 1 & 1 & 1 & \tau & \tau & \tau 
\end{array}
\right.
\\
&
\left.
\ \ 
\begin{array}{cccccccccc}
\tau + 1 & \tau + 1 & \tau & \tau + 1 & \tau + 1 & 2 \tau + 1 & 2 \tau + 1 & \tau + 1 & \tau + 1 & 2 \tau + 1 
\\
\tau + 1 & 2 \tau & \tau + 1 & \tau + 1 & 2 \tau + 1 & 2 \tau + 1 & 2 \tau + 2 & 2 \tau & 2 \tau + 1 & 2 \tau + 1 
\\
\tau & \tau & \tau + 1 & \tau + 1 & \tau + 1 & \tau + 1 & \tau + 1 & 2 \tau & 2 \tau & 2 \tau 
\\
\tau & \tau & \tau & \tau & \tau & \tau & \tau & \tau & \tau & \tau 
\end{array}
\right.
\\
&
\left.
\ \ 
\begin{array}{cccccccccc}
2 \tau + 1 & 2 \tau + 2 & 2 \tau + 1 & 2 \tau + 1 & 2 \tau + 2 & 3 \tau + 1 & 2 \tau + 2 & 2 \tau + 1 & 2 \tau + 1 & 2 \tau + 1 
\\
3 \tau + 1 & 3 \tau + 1 & 2 \tau + 2 & 3 \tau + 1 & 3 \tau + 1 & 3 \tau + 2 & 3 \tau + 2 & 2 \tau + 2 & 2 \tau + 2 & 3 \tau + 1 
\\
2 \tau & 2 \tau & 2 \tau + 1 & 2 \tau + 1 & 2 \tau + 1 & 2 \tau + 1 & 2 \tau + 1 & \tau + 2 & 2 \tau + 1 & 2 \tau + 1 
\\
\tau & \tau & \tau & \tau & \tau & \tau & \tau & \tau + 1 & \tau + 1 & \tau + 1 
\end{array}
\right.
\\
&
\left.
\ \ 
\begin{array}{cccccccccc}
2 \tau + 2 & 2 \tau + 2 & 3 \tau + 1 & 2 \tau + 2 & 3 \tau + 1 & 3 \tau + 1 & 3 \tau + 2 & 3 \tau + 2 & 3 \tau + 2 & 3 \tau + 2 
\\
3 \tau + 1 & 3 \tau + 2 & 3 \tau + 2 & 3 \tau + 2 & 3 \tau + 2 & 3 \tau + 3 & 3 \tau + 3 & 4 \tau + 2 & 4 \tau + 2 & 4 \tau + 2 
\\
2 \tau + 1 & 2 \tau + 1 & 2 \tau + 1 & 2 \tau + 2 & 2 \tau + 2 & 2 \tau + 2 & 2 \tau + 2 & 2 \tau + 2 & 3 \tau + 1 & 3 \tau + 1 
\\
\tau + 1 & \tau + 1 & \tau + 1 & \tau + 1 & \tau + 1 & \tau + 1 & \tau + 1 & \tau + 1 & \tau + 1 & 2 \tau
\end{array}
\right). 
\end{align*}

The LCM-period is $\rho_{\mathrm{H}_{4}} = \langle 6\sqrt{5} \rangle $ and the constituents of $ \chi_{\mathrm{H}_{4}}^{\mathrm{quasi}} $ is as follows: 
\begin{align*}
f_{\mathrm{H}_{4}}^{\langle 1 \rangle}(t) &= t^{4}-60t^{3}+1138t^{2}-7140t+6061 = (t-1)(t-11)(t-19)(t-29), \\
f_{\mathrm{H}_{4}}^{\langle 3\rangle}(t) &= t^{4}-60t^{3}+1138t^{2}-7140t+9261 = (t-9)(t-21)(t^{2}-30t+49),   \\
f_{\mathrm{H}_{4}}^{\langle \sqrt{5}\rangle}(t) &= t^{4}-60t^{3}+1138t^{2}-7140t+14125 = (t-5)(t-25)(t^{2}-30t+113),  \\
f_{\mathrm{H}_{4}}^{\langle 3\sqrt{5}\rangle}(t) &= t^{4}-60t^{3}+1138t^{2}-7140t+17325,  \\
f_{\mathrm{H}_{4}}^{\langle 2\rangle}(t) &= t^{4}-60t^{3}+1138t^{2}-8040t+17536 = (t-4)(t-16)(t^{2}-40t+274) ,\\
f_{\mathrm{H}_{4}}^{\langle 6\rangle}(t) &= t^{4}-60t^{3}+1138t^{2}-8040t+20736, \\
f_{\mathrm{H}_{4}}^{\langle 2\sqrt{5}\rangle}(t) &= t^{4}-60t^{3}+1138t^{2}-8040t+25600 = (t-20)(t^{3}-40t^{2}+338t-1280), \\
f_{\mathrm{H}_{4}}^{\langle 6\sqrt{5}\rangle}(t) &= t^{4}-60t^{3}+1138t^{2}-8040t+28800.
\end{align*}

\subsection{Observations}

Kamiya, Takemura, and Terao \cite[Theorem 3.1]{kamiya2010characteristic-alsas} gave an explicit formula of the generating function $ \Gamma_{\Phi} \coloneqq \sum_{q = 1}^{\infty} \chi_{\Phi}^{\mathrm{quasi}}(q)t^{q} $ for an irreducible crystallographic root system $ \Phi $ in terms of the coefficient of the highest root and the Coxeter number. 
We obtain the following corollary. 

\begin{corollary}[Kamiya-Takemura-Terao {\cite[Corollary 3.4]{kamiya2010characteristic-alsas}}] \label{positive <=> q geq h}
Let $ h $ be the Coxeter number of $ \Phi $. 
Then $ \chi_{\Phi}^{\mathrm{quasi}}(q) > 0 $ if and only if $ q \geq h $. 
\end{corollary}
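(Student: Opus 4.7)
The plan is to combine a standard lattice-point interpretation of $\chi_{\Phi}^{\mathrm{quasi}}$ via the fundamental alcove with the elementary fact that, in an irreducible crystallographic root system, the heights of positive roots (computed in the simple-root basis) range exactly over $\{1, 2, \ldots, h-1\}$. Since $\chi_{\Phi}^{\mathrm{quasi}}(q) = |M(\mathcal{A}_{\Phi}(\mathbb{Z}/q\mathbb{Z}))|$ is a cardinality it is automatically non-negative, so positivity is the same as non-vanishing. Write $\tilde\alpha = \sum_i n_i \alpha_i$ for the highest root, so that $h = 1 + \sum_i n_i$.

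For the direction $q \geq h \Rightarrow \chi_{\Phi}^{\mathrm{quasi}}(q) > 0$, I would exhibit an explicit regular point. Take $\boldsymbol{x} = (1, 1, \ldots, 1) \in (\mathbb{Z}/q\mathbb{Z})^{\ell}$; then for each positive root $\alpha_j = \sum_i (c_j)_i \alpha_i$ the inner product $\boldsymbol{x} \cdot c_j = \sum_i (c_j)_i$ equals the height of $\alpha_j$, which lies in $\{1, \ldots, h-1\}$. For $q \geq h$ none of these heights is divisible by $q$, so $\boldsymbol{x} \in M(\mathcal{A}_{\Phi}(\mathbb{Z}/q\mathbb{Z}))$.

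For the converse direction $q < h \Rightarrow \chi_{\Phi}^{\mathrm{quasi}}(q) = 0$, I would invoke Suter's identification (which underlies the Kamiya--Takemura--Terao formula for $\Gamma_{\Phi}$ cited above),
\[
\chi_{\Phi}^{\mathrm{quasi}}(q) \;=\; |W| \cdot \bigl| qA^{\circ} \cap P^{\vee} \bigr|,
\]
where $P^{\vee}$ is the coweight lattice (identified with $\mathbb{Z}^{\ell}$ via the coordinates of the excerpt through $(x_1, \ldots, x_\ell) \leftrightarrow \sum_i x_i \omega_i^{\vee}$) and $A^{\circ} = \{y \in V : \alpha_i(y) > 0 \text{ for each simple } \alpha_i,\ \tilde\alpha(y) < 1\}$ is the open fundamental alcove. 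A lattice point $y \in qA^{\circ} \cap P^{\vee}$ has integer pairings $\alpha_i(y) \geq 1$ with every simple root, so
\[
q \;>\; \tilde\alpha(y) \;=\; \sum_i n_i\, \alpha_i(y) \;\geq\; \sum_i n_i \;=\; h - 1,
\]
forcing $q \geq h$. Hence $q < h$ rules out any such lattice point and gives $\chi_{\Phi}^{\mathrm{quasi}}(q) = 0$.

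The main obstacle is the alcove identification invoked above: matching the coordinate-level description of $\mathcal{A}_{\Phi}$ in the excerpt with the intrinsic action of the affine Weyl group on $P^{\vee}$ is a classical but nontrivial reconciliation, essentially the content of Suter's theorem and the geometric source of KTT's explicit formula for $\Gamma_{\Phi}$. Once this identification is granted, the displayed inequality is an elementary manipulation with the marks $n_i$ and the definition of the Coxeter number, and no case-by-case type analysis is required.
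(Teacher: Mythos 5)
The paper does not prove this statement itself --- it is cited as Kamiya--Takemura--Terao, Corollary 3.4, which is deduced there from their explicit closed form for the generating function $\Gamma_\Phi$ (essentially the Ehrhart series of the fundamental alcove). Your attempt therefore necessarily takes a different, more geometric and classification-free route, which is legitimate. Your forward direction is correct and nicely elementary: the all-ones vector pairs with each positive root to its height, the heights lie in $\{1,\dots,h-1\}$, and for $q\geq h$ none of these vanishes modulo $q$.

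The gap is in the backward direction, in the asserted identity $\chi_\Phi^{\mathrm{quasi}}(q) = |W|\cdot |qA^\circ\cap P^\vee|$. This formula is false as stated: already in type $A_1$ one has $\chi^{\mathrm{quasi}}(q)=q-1$ while $|W|\cdot|qA^\circ\cap P^\vee|=2(q-1)$, and the correct constant involves dividing by the index $[P^\vee:Q^\vee]$; moreover the $W$-orbits in $P^\vee/qP^\vee$ are not always free (e.g.\ $A_2$ with $q=3$), so one cannot naively argue via free orbits either. Fortunately your argument does not actually require any counting formula --- it only needs the implication $\chi^{\mathrm{quasi}}_\Phi(q)>0 \Rightarrow qA^\circ\cap P^\vee \neq\varnothing$, and that can be proved directly: a regular class $\bar y\in P^\vee/qP^\vee$ lifts to $y\in P^\vee$ with $\alpha(y)\not\equiv 0\pmod{q}$ for every root $\alpha$; the group $W\ltimes qQ^\vee$ preserves $P^\vee$, preserves this regularity (since $\alpha(wy+q\beta)\equiv (w^{-1}\alpha)(y)\pmod{q}$), and has $q\bar A$ as a fundamental domain, so $y$ may be moved to some $y'\in P^\vee\cap q\bar A$, which by regularity must lie in the open alcove $qA^\circ$. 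Your inequality $q>\tilde\alpha(y')=\sum_i n_i\alpha_i(y')\geq\sum_i n_i=h-1$ then finishes the argument. Replacing the incorrect Suter-type formula with this direct reduction makes the proof sound.
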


In order to consider the analogue, we list the Coxeter numbers and possible values of the absolute norms of nonzero ideals of $ \mathcal{O} $. 
The Coxeter numbers are 
\begin{align*}
h_{\mathrm{H}_{2}} = 5, \qquad
h_{\mathrm{H}_{3}} = 10, \qquad 
h_{\mathrm{H}_{4}} = 30. 
\end{align*}

The possible values of absolute norms are positive integers of the form $ a^{2}+ab-b^{2} $: 
\begin{align*}
1,4,5,9,11,16,19,20,25,29,31, \dots. 
\end{align*}

From the lists above, we obtain the following theorem. 
\begin{theorem}
Let $ \ell \in \{2,3,4\} $ and $ \mathfrak{a} $ a nonzero ideal of $ \mathcal{O} $. 
Then $ \chi_{\mathrm{H}_{\ell}}^{\mathrm{quasi}}(\mathfrak{a}) > 0 $ if and only if $ N(\mathfrak{a}) \geq h_{\mathrm{H}_{\ell}} $. 
\end{theorem}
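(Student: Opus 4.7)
The plan is a finite verification built on the explicit constituents tabulated above. For any nonzero ideal $\mathfrak{a}$ of $\mathcal{O} = \mathbb{Z}[\tau]$, the value $\chi_{\mathrm{H}_\ell}^{\mathrm{quasi}}(\mathfrak{a})$ equals $f_{\mathrm{H}_\ell}^{\kappa}(N(\mathfrak{a}))$ where $\kappa = \mathfrak{a} + \rho_{\mathrm{H}_\ell}$. Since every absolute norm in $\mathcal{O}$ is a product of prime ideal norms of the form $p$ (when a rational prime satisfies $p = 5$ or $p \equiv \pm 1 \pmod 5$) or $p^2$ (when $p \equiv \pm 2 \pmod 5$), the list of achievable norms below $h_{\mathrm{H}_\ell}$ is precisely the finite list already appearing in the observations, and each such norm determines $\mathfrak{a}$, hence $\kappa$, essentially uniquely via its prime factorization.

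For the ``only if'' direction, I would enumerate the achievable norms $n < h_{\mathrm{H}_\ell}$, determine the associated $\kappa$, and verify $f_{\mathrm{H}_\ell}^\kappa(n) \le 0$. Inspection of the displayed factorizations reveals that each such $n$ appears as a root of the relevant constituent, so $f_{\mathrm{H}_\ell}^\kappa(n) = 0$. For example, in the $\mathrm{H}_4$ case $n = 20$ forces $\mathfrak{a} = \langle 2\sqrt{5}\rangle$, whence $\kappa = \langle 2\sqrt{5}\rangle$, and indeed $20$ is the root of the linear factor $(t-20)$ of $f_{\mathrm{H}_4}^{\langle 2\sqrt{5}\rangle}(t) = (t-20)(t^3 - 40t^2 + 338t - 1280)$; every one of the ten entries $1,4,5,9,11,16,19,20,25,29$ is similarly accounted for.

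For the ``if'' direction, for each divisor $\kappa$ of $\rho_{\mathrm{H}_\ell}$ I would locate the largest real root of $f_{\mathrm{H}_\ell}^\kappa$ and exhibit the smallest norm $n_\kappa \ge h_{\mathrm{H}_\ell}$ realized by an ideal whose gcd with $\rho_{\mathrm{H}_\ell}$ is $\kappa$; since each constituent is monic, positivity at every $n \ge n_\kappa$ then follows once $n_\kappa$ is shown to exceed the largest real root. The cases $\mathrm{H}_2$ and $\mathrm{H}_3$ are immediate from the displayed factorizations. The main technical difficulty concentrates in the four $\mathrm{H}_4$ constituents with linear coefficient $-8040$, whose largest real roots cluster near $h_{\mathrm{H}_4} = 30$: numerical estimation yields approximately $31.22$, $30.84$, $30.19$, and $29.65$ for $f^{\langle 2 \rangle}$, $f^{\langle 6 \rangle}$, $f^{\langle 2\sqrt{5}\rangle}$, and $f^{\langle 6\sqrt{5}\rangle}$ respectively, while the corresponding smallest applicable norms are $44$, $36$, $80$, and $180$, all strictly larger. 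A pleasant shortcut applies to $f^{\langle 3\sqrt{5}\rangle}$: writing $f^{\langle 3\sqrt{5}\rangle}(t) - f^{\langle 1 \rangle}(t) = 11264$ and using the substitution $s = t - 15$ to observe that $(t-1)(t-11)(t-19)(t-29) = (s^2 - 16)(s^2 - 196)$ attains global minimum $-8100$ on $\mathbb{R}$, one concludes $f^{\langle 3\sqrt{5}\rangle}(t) \ge 3164 > 0$ everywhere, so no root estimate is required in that case.
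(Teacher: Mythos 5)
Your proposal is correct and follows the same strategy as the paper: a finite verification comparing achievable absolute norms against the tabulated constituents. In fact, the paper's own ``proof'' consists only of the remark ``From the lists above, we obtain the following theorem,'' so your argument---identifying the relevant $\kappa$ for each norm below $h_{\mathrm{H}_\ell}$, checking that it is a root of the corresponding constituent, and bounding the largest real roots against the smallest applicable norms $\ge h_{\mathrm{H}_\ell}$ for the ``if'' direction---supplies exactly the verification the paper leaves implicit, including the delicate $\mathrm{H}_4$ cases with linear term $-8040$ and the clean algebraic shortcut for $f^{\langle 3\sqrt{5}\rangle}$.
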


It is well known that the exponents of a root system satisfy duality with respect to the Coxeter number. 
Namely, if $d$ is an exponent, then $h-d$ is also an exponent, where $h$ denotes the Coxeter number. 
The exponents for $\mathrm{H}_{2}, \mathrm{H}_{3}$, and $\mathrm{H}_{4}$ are 
\begin{align*}
(1,4), \qquad
(1,5,9), \qquad 
(1, 11, 19, 29), 
\end{align*}
respectively. 
They appear in the factorization of the characteristic polynomials. 

The characteristic quasi-polynomial of an irreducible crystallographic root system also has duality with respect to the Coxeter number. 
The duality can be shown from the explicit expressions given by Kamiya, Takemura, and Terao \cite{kamiya2007characteristic-a}, or Suter \cite{suter1998number-moc}. 
Yoshinaga \cite{yoshinaga2018worpitzky-tmj} gave a classification-free proof. 

\begin{theorem}[Yoshinaga {\cite[Corollary 3.8]{yoshinaga2018worpitzky-tmj}}] \label{duality}
Let $ \Phi $ be an irreducible crystallographic root system of rank $ \ell $ and $ h $ its Coxeter number. 
Then $ \chi_{\Phi}^{\mathrm{quasi}}(q) = (-1)^{\ell}\chi_{\Phi}^{\mathrm{quasi}}(h-q) $. 
\end{theorem}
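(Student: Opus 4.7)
\emph{Proof proposal.} The plan is to deduce the duality from Yoshinaga's Worpitzky partition combined with Ehrhart--Macdonald reciprocity. Let $V$ be the ambient Euclidean space of $\Phi$, $Q^\vee$ the coroot lattice, $W$ the Weyl group, and $A \subset V$ the closed fundamental alcove cut out by $\langle \alpha_i, x \rangle \ge 0$ for simple roots $\alpha_i$ and $\langle \theta, x \rangle \le 1$ for the highest root $\theta$. A standard fact is that $hA$ is a lattice simplex with respect to $Q^\vee$, whose unique interior lattice point is $\rho^\vee := \sum_i \omega_i^\vee$.

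First I would invoke the Worpitzky partition, which provides a $W$-equivariant bijection between the complement $M(\mathcal{A}_\Phi(\mathbb{Z}/q\mathbb{Z}))$ and interior lattice points of dilated open alcoves, summed over a fundamental domain for $Q^\vee$ inside $V$. In formulas,
$$\chi_\Phi^{\mathrm{quasi}}(q) = \sum_{w \in W} L_{(wA)^\circ}(q), \qquad L_{(wA)^\circ}(q) := \#\bigl(Q^\vee \cap q(wA)^\circ\bigr).$$
The geometric content is that the open alcoves $wA^\circ$ ($w \in W$) tile a fundamental parallelepiped of $Q^\vee$, and this tiling descends compatibly modulo $q$.

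Second, I would apply Ehrhart--Macdonald reciprocity to each $wA$, obtaining $L_{(wA)^\circ}(q) = (-1)^\ell L_{wA}(-q)$ as a quasi-polynomial identity with period dividing $h$. The second ingredient is the shift identity
$$L_{(wA)^\circ}(q) = L_{wA}(q - h),$$
which expresses the fact that $hA$ is a Gorenstein simplex of index $h$ (equivalently, $hA - \rho^\vee$ is reflexive with respect to $Q^\vee$), a classical consequence of the identity $h = 1 + \sum_i c_i$ where the $c_i$ are the marks of the highest root. Combining these gives $L_{wA}(q-h) = (-1)^\ell L_{wA}(-q)$, hence $L_{(wA)^\circ}(h-q) = L_{wA}(-q) = (-1)^\ell L_{(wA)^\circ}(q)$, and summing over $w \in W$ together with the Worpitzky identity produces $\chi_\Phi^{\mathrm{quasi}}(h-q) = (-1)^\ell \chi_\Phi^{\mathrm{quasi}}(q)$.

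The hardest step is the Worpitzky identity itself: its classification-free proof requires constructing a $W$-equivariant bijection between complement points on the torus $V/Q^\vee$ and interior lattice points of translated alcoves, uniformly across all residue classes modulo $h$. The Gorenstein-of-index-$h$ property used in the second step, though classical, is also precisely where the Coxeter number (rather than some smaller divisor) enters, via the sum of the marks on the affine Dynkin diagram; this is ultimately what forces the duality to pair $q$ with $h - q$ and not with any other shift.
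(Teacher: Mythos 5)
The paper does not prove this statement: it is quoted verbatim as a known result of Yoshinaga, with the citation \cite[Corollary 3.8]{yoshinaga2018worpitzky-tmj}, and used only as a point of comparison in the discussion of the non-crystallographic $\mathrm{H}_\ell$ examples. So there is no ``paper's own proof'' to compare against; the relevant comparison is with Yoshinaga's argument.

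Your high-level strategy does match Yoshinaga's: express $\chi_{\Phi}^{\mathrm{quasi}}$ in terms of the Ehrhart quasi-polynomial of (interiors of) dilated alcoves via a Worpitzky-type partition, then combine Ehrhart--Macdonald reciprocity with the ``Gorenstein of index $h$'' property of the fundamental alcove. The shift computation is also essentially right: writing $x = \sum_i a_i \omega_i^\vee$ and using $\sum_i c_i = h - 1$ gives $L_{A^\circ}(q) = L_A(q-h)$ with respect to the coweight lattice, and together with $L_{A^\circ}(q) = (-1)^\ell L_A(-q)$ this yields $L_{A^\circ}(h-q) = (-1)^\ell L_{A^\circ}(q)$, which is the self-duality you want to push through the partition.

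However, the Worpitzky identity as you wrote it is not correct, on two counts. First, the geometric claim that ``the open alcoves $wA^\circ$ ($w \in W$) tile a fundamental parallelepiped of $Q^\vee$'' is false: since $0$ is a vertex of $A$ and $w \cdot 0 = 0$, every $wA$ contains $0$ in its closure, so the family $\{wA\}_{w\in W}$ is a fan around the origin, not a tiling of a fundamental domain. What tiles a fundamental domain are $[W_{\mathrm{aff}} : Q^\vee] = |W|$ alcoves lying in different $Q^\vee$-translates, and one must argue separately that all of these have the same Ehrhart count. Second, the relevant lattice is the coweight lattice $P^\vee$, not the coroot lattice $Q^\vee$, because the Kamiya--Takemura--Terao arrangement $\mathcal{A}_\Phi$ records coefficients of roots with respect to simple roots, so a class $\bar x \in (\mathbb{Z}/q\mathbb{Z})^\ell$ is naturally $\sum_i \bar x_i\, \omega_i^\vee \in P^\vee/qP^\vee$. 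If you carry out your computation with $Q^\vee$ you get the wrong multiplicity and even the wrong period: for $A_1$, the formula $\sum_{w\in W} L_{(wA)^\circ, Q^\vee}(q) = 2L_{A^\circ, Q^\vee}(q)$ equals $q-2$ for even $q$ but $q-1$ for odd $q$, whereas $\chi_{A_1}^{\mathrm{quasi}}(q) = q-1$ for all $q$. With $P^\vee$ the prefactor becomes $|W|/[P^\vee : Q^\vee]$, and the identity then checks out. So the sketch has the right ideas but needs the lattice and the partition fixed before the final summation is valid.
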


Note that the duality holds as quasi-polynomials but not the level of the constituents. 
Yoshinaga \cite{yoshinaga2018characteristic-joctsa} studied the condition for the constituents to hold the duality in detail. 

We consider an analogue of Theorem \ref{duality} for our cases. 
One can see that if $ \kappa \mid \rho_{\mathrm{H}_{\ell}} $ is not a multiple of $ \langle 2 \rangle $, then the $ \kappa $-constituent satisfies the duality. 
Namely $ f_{\mathrm{H}_{\ell}}^{\kappa}(t) = (-1)^{\ell}f_{\mathrm{H}_{\ell}}^{\kappa}(h_{\mathrm{H}_{\ell}}-t) $ when $ \langle 2 \rangle \nmid \kappa $. 
However, unfortunately it seems that the $ \kappa $-constituent does not satisfy the duality when $ \kappa $ is a multiple of $ \langle 2 \rangle $. 

In the proof of Theorem \ref{duality}, the coweight lattice plays an important role. 
The coweight lattice is generated freely by the dual basis of the the simple roots. 
Hence it may make sense to consider the coefficients of the positive roots with respect to the dual basis of the simple roots in our cases. 

The change-of-basis matrices for $ \mathrm{H}_{3} $ and $ \mathrm{H}_{4} $ are 
\begin{align*}
\begin{pmatrix}
1 & -\frac{\tau}{2} & 0 \\
-\frac{\tau}{2} & 1 & -\frac{1}{2} \\
0 & -\frac{1}{2} & 1
\end{pmatrix} \qquad \text{ and } \qquad
\begin{pmatrix}
1 & -\frac{\tau}{2} & 0 & 0 \\
-\frac{\tau}{2} & 1 & -\frac{1}{2} & 0 \\
0 & -\frac{1}{2} & 1 & -\frac{1}{2} \\
0 & 0 & -\frac{1}{2} & 1
\end{pmatrix}. 
\end{align*}

Therefore in this case the natural coefficient ring may be $ \mathbb{Z}[\tau, \frac{1}{2}] $, which is a localization of $ \mathcal{O} = \mathbb{Z}[\tau] $ with respect to $ S = \Set{2^{m} \in \mathcal{O} | m \in \mathbb{Z}_{\geq 0}} $. 
By Theorem \ref{localization}, the $ \kappa $-constituents with $ \langle 2 \rangle \mid \kappa $ are eliminated and only the constituents that satisfy the duality remain. 

\begin{question}
Are there any reasons for these results? 
\end{question}

\section*{Acknowledgments}

The authors thank Professor Masahiko Yoshinaga for his comments about the duality of the characteristic quasi-polynomials. 
This work was supported by JSPS KAKENHI Grant Number JP22K13885.

\bibliographystyle{amsplain}
\bibliography{bibfile}

\end{document}